\newtheorem{thm}{Theorem}[section]
\newtheorem{cor}[thm]{Corollary}
\newtheorem{lem}[thm]{Lemma}
\newtheorem{prop}[thm]{Proposition}
\newtheorem{defn}[thm]{Definition}
\newtheorem{rem}[thm]{Remark}
\numberwithin{equation}{section}
\newcommand{\Z}{\mathbb Z}
\newcommand{\N}{\mathbb N}
\newcommand{\Q}{\mathbb Q}
\newcommand{\CP}{{\mathbb C}{\mathbb P}}
\newcommand{\cpkk}{{\overline {{\mathbb C}{\mathbb P}^2}}}
\newcommand{\cpk}{{\mathbb {CP}}^2}
\begin{document}

\title{Definite four-manifolds with exotic smooth structures}

\author{Andr\'{a}s I. Stipsicz}
\address{HUN-REN R\'enyi Institute of Mathematics\\
H-1053 Budapest\\ 
Re\'altanoda utca 13--15, Hungary}
\email{stipsicz.andras@renyi.hu}

\author{Zolt\'an Szab\'o}
\address{Department of Mathematics\\
Princeton University\\
 Princeton, NJ, 08544}
\email{szabo@math.princeton.edu}

\begin{abstract}
  In this paper we study smooth structures on closed oriented
  $4$-manifolds with fundamental group $\Z/ 2\Z$ and definite
  intersection form. We construct infinitely many irreducible, smooth,
  oriented, closed, definite four-manifolds with $\pi_1=\Z/2\Z$ and
  $b_2=1$, and $b_2=2$.  As an application, we prove that when the
  second Betti number $b_2$ of a definite four-manifold with $\pi
  _1=\Z/2\Z$ is positive and it admits a smooth structure, then it
  admits infinitely many smooth structures.
\end{abstract}
\maketitle

\section{Introduction}
\label{sec:intro}
In \cite{LLP} an exotic four-manifold with definite intersection form
and non-trivial fundamental group was constructed.  In~\cite{definite}
further such examples have been found, and on smooth,
oriented, closed, definite four-manifolds with fundamental group
$\Z/2\Z$ and Euler characteristic $\geq 6$ infinitely many exotic
smooth structures were given. The construction of \cite{definite}
relied on finding exotic smooth structures on $\cpk \#9\cpkk$ with
fixed point free involutions, and used Seiberg-Witten invariants and
their transformation rule under the blow-up procedure. In this paper
we extend these studies and find exotic smooth structures on other
simply connected four-manifolds with fixed point free involutions,
leading to our main result:
\begin{thm}\label{thm:main}
  Suppose that $X$ is a closed, oriented, topological four-manifold
  with $\pi _1(X)=\Z /2\Z$, with definite intersection form and with
  second Betti number $b_2(X)>0$.  If $X$ admits a smooth structure,
  then it admits infinitely many non-diffeomorphic smooth structures.
\end{thm}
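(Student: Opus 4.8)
The plan is to identify $X$, up to homeomorphism, with a connected sum of one of the building blocks of the paper and copies of $\cpkk$, and then to distinguish its smooth structures by transporting Seiberg--Witten data through the blow-ups and through the double cover. First I would reduce to the negative definite case, since reversing orientation exchanges positive and negative definite forms and leaves $\pi_1$, $b_2$ and smoothability unchanged; write $b=b_2(X)>0$. The universal cover $p\colon\widetilde X\to X$ is a closed, simply connected, smooth four-manifold with $b_2(\widetilde X)=2+2b$ and $\sigma(\widetilde X)=2\sigma(X)=-2b$, hence $b^+(\widetilde X)=1$. Donaldson's diagonalization theorem applied to $X$ (no hypothesis on $\pi_1$ is needed) gives $Q_X\cong\langle-1\rangle^{b}$, and Furuta's $10/8$-inequality applied to $\widetilde X$ rules out $\widetilde X$ being spin, as a smooth spin $\widetilde X$ with $b>0$ would violate $2+2b\ge\frac{10}{8}\cdot 2b+2$. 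Hence $Q_{\widetilde X}$ is odd, indefinite and of signature $-2b$, so $Q_{\widetilde X}\cong\langle1\rangle\oplus\langle-1\rangle^{2b+1}$, and by Freedman (smoothability kills the Kirby--Siebenmann invariant) $\widetilde X$ is homeomorphic to $\cpk\#(2b+1)\cpkk$ --- the manifold carrying the free involutions of the present construction and of \cite{definite}.

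Next I would pin down the homeomorphism type of $X$ itself: classifying such $X$ amounts to classifying free involutions on $\cpk\#(2b+1)\cpkk$ up to equivariant homeomorphism, and by the Hambleton--Kreck classification of closed oriented topological four-manifolds with $\pi_1=\Z/2\Z$ --- together with $\mathrm{KS}(X)=0$ and $\widetilde X$ non-spin --- the homeomorphism type of $X$ is determined by the $\Z[\Z/2\Z]$-equivariant isometry type of $H_2$. Imposing that the quotient be definite and smoothable leaves, for fixed $b$, only finitely many possibilities (an explicit short list, via the classification carried out in the body of the paper and in \cite{definite}). I would then verify that each is realised by $Z\#\,m\cpkk$ for one of the blocks $Z$ (with $b_2(Z)\in\{1,2\}$ here, or $\chi(Z)\ge 6$ from \cite{definite}) and some $m\ge0$; this is the book-keeping that forces both small model ranks, since an equivariant form that does not split off an $\langle-1\rangle$ from the $b_2=1$ model requires the $b_2=2$ model, and these two, stabilised by $\langle-1\rangle$-summands, exhaust the list. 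Thus $X$ is homeomorphic to some $Z\#\,m\cpkk$.

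Finally, take the infinitely many pairwise non-diffeomorphic smooth structures $Z_1,Z_2,\dots$ on $Z$ --- these are separated in the body of the paper by the ($b^+=1$, small-perturbation) Seiberg--Witten invariants of their universal covers $\widetilde{Z_n}$, exotic copies of $\cpk\#(2b_2(Z)+1)\cpkk$ --- and put $X_n:=Z_n\#\,m\cpkk$, each homeomorphic to $X$. Its universal cover $\widetilde{X_n}=\widetilde{Z_n}\#\,2m\cpkk$ is an exotic $\cpk\#(2b+1)\cpkk$ obtained from $\widetilde{Z_n}$ by $2m$ blow-ups, and the blow-up formula expresses the set of its Seiberg--Witten basic classes, with their chamber values and signs, as $\{K\pm E_1\pm\cdots\pm E_{2m}:K\text{ a basic class of }\widetilde{Z_n}\}$; this spreading operation is injective on the data that separates the $\widetilde{Z_n}$, so the $\widetilde{X_n}$ --- and hence the $X_n$, since a diffeomorphism between two of the $X_n$ lifts to one between their covers --- take infinitely many diffeomorphism values, and choosing one $X_n$ per value gives infinitely many smooth structures on $X$. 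I expect the genuine difficulty to lie here: one must carry the Seiberg--Witten information simultaneously through the connected sums with $\cpkk$ --- straightforward as a formula, but demanding honest care with the $b^+=1$ chamber at each blow-up --- and through the two-to-one cover, which is the equivariant machinery developed in the main text; the topological reduction above should be routine given the classification results, except for the verification that the two small blocks genuinely exhaust all equivariant types.
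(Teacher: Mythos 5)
Your overall strategy coincides with the paper's: reduce to the negative definite case by reversing orientation, show that $X$ is homeomorphic to a blow-up of one of the small exotic quotients constructed in the body of the paper, and distinguish the resulting smooth structures by Seiberg--Witten data of the double covers, with the $b^+=1$ chamber dependence kept under control. The paper does exactly this with $A_n=Y_n'\#(b_2-1)\cpkk$, whose universal cover is $Y_n\#(2b_2-2)\cpkk$; it records that in every chamber the invariants take values in $\{0,\pm1,\pm n^2,\pm n^2\pm1\}$ while some class and chamber gives $\pm n^2$, so the covers (and hence the quotients, since diffeomorphisms lift) are pairwise distinct for distinct $n$ --- essentially your final paragraph.

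The one step that is not right as you state it is the topological identification. Hambleton--Kreck's Theorem C, as used in the paper, classifies closed oriented four-manifolds with finite cyclic fundamental group admitting a smooth structure by the \emph{ordinary} intersection form on $H_2(X;\Z)/\mathrm{Tors}$ together with the $w_2$-types of $X$ and $\widetilde X$ --- not by the $\Z[\Z/2\Z]$-equivariant isometry type of $H_2(\widetilde X)$. Your proposed detour through an equivariant classification, a ``short list'' of equivariant forms, and the bookkeeping claim that the $b_2=1$ and $b_2=2$ blocks stabilized by $\langle-1\rangle$-summands exhaust that list is therefore both unsubstantiated (you would be proving a classification of free involutions up to equivariant homeomorphism, a strictly harder statement that is nowhere carried out) and unnecessary. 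The paper's Theorem~\ref{thm:TopClassification} settles it directly: Donaldson's diagonalization gives $Q_X\cong\langle-1\rangle^{b_2}$, so $X$ is non-spin, and $\widetilde X$ is non-spin because $b^+(\widetilde X)=1$ while $b^-(\widetilde X)>1$ (the paper quotes Donaldson's Theorem~B here; your Furuta argument also works), whence all smooth negative definite four-manifolds with $\pi_1=\Z/2\Z$ and the same $b_2>0$ are homeomorphic. In particular only the $b_2=1$ block is needed for every $b_2$: $X$ is homeomorphic to $Y_n'\#(b_2-1)\cpkk$ for all $n$, and with that correction your blow-up/chamber argument is the paper's proof.
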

In comparison, no closed simply connected definite smooth four-manifold
with an exotic smooth structure is known.

While in \cite{definite} the constructions rested on the rational
blow-down method, combined with knot surgery and double node surgery,
in the proof of Theorem~\ref{thm:main} we will mostly rely on
Luttinger surgery and, more generally, torus surgery~\cite{FPS}.
In particular, we will prove the following two theorems. To state
them, let us define the closed, oriented, smooth four-manifold $Z_1$
by quotienting $S^2\times S^2$ with the orientation preserving
involution given by the antipodal maps on the two factors. It is easy
to see that $\pi _1(Z_1)=\Z/2\Z$ and $b_2(Z_1)=0$.

\begin{thm}\label{thm:b2=2}
  The smooth four-manifold $Z_1\#2\cpkk$ admits infinitely many
  irreducible exotic smooth structure.
\end{thm}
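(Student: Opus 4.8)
The plan is to pass to the universal double cover and construct the exotic structures there, $\Z/2\Z$-equivariantly. By the definition of $Z_1$ its universal cover is $S^2\x S^2$, so the universal cover of $Z_1\#2\cpkk$ is obtained from $S^2\x S^2$ by equivariant blow-up at two orbits of the covering involution; hence it is diffeomorphic to $S^2\x S^2\#4\cpkk\cong\cpk\#5\cpkk$ (using $S^2\x S^2\#\cpkk\cong\cpk\#2\cpkk$). Consequently, if $W$ is any smooth $4$-manifold homeomorphic to $Z_1\#2\cpkk$, then $\pi_1(W)=\Z/2\Z$, its universal cover $\widetilde W$ is homeomorphic to $\cpk\#5\cpkk$, and $W=\widetilde W/\si$ for a free smooth involution $\si$. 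A diffeomorphism between two such quotients lifts to an equivariant diffeomorphism of their universal covers, and $\widetilde{Z_1\#2\cpkk}=\cpk\#5\cpkk$; so it suffices to produce infinitely many pairwise non-diffeomorphic closed, simply connected, smooth $4$-manifolds $X_n$ ($n\in\N$), each homeomorphic to $\cpk\#5\cpkk$ but none diffeomorphic to it, each carrying a free smooth involution $\si_n$ for which $X_n/\si_n$ is homeomorphic to $Z_1\#2\cpkk$.

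To build the first of these I would start from a symplectic building block with a free symplectic involution $\iota$---a product of surfaces, or a fibre sum of elliptic-surface pieces, chosen so that $\iota$ is manifestly free---containing abundant $\iota$-invariant Lagrangian tori and $\iota$-orbits $\{T,\iota(T)\}$ of disjoint Lagrangian tori. Performing a prescribed sequence of Luttinger surgeries, and more generally torus surgeries~\cite{FPS}, simultaneously along the two members of each such orbit (transporting the surgery datum by $\iota$, so that the operation is equivariant), one arranges that the resulting manifold $X_0$ is simply connected, the induced involution $\si_0$ is still free, the Euler characteristic and signature of $X_0/\si_0$ equal those of $Z_1\#2\cpkk$, and enough of the symplectic structure survives on $X_0$ that its small-perturbation Seiberg--Witten invariants are those of a minimal symplectic manifold---in particular $X_0$ is homeomorphic to $\cpk\#5\cpkk$ but, since $\overline{\SW}_{\cpk\#5\cpkk}\equiv 0$, not diffeomorphic to it. One retains, at the end, a distinguished $\si_0$-orbit $\{\La_0,\si_0(\La_0)\}$ of disjoint null-homologous tori with product neighbourhoods.

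The infinite family is obtained by $\Z/2\Z$-equivariant surgery on this orbit: performing the $n$-th member of a family of torus surgeries (for instance $1/n$-logarithmic transforms, or Fintushel--Stern knot surgeries with knots $K_n$ of pairwise distinct Alexander polynomials) simultaneously on $\La_0$ and on $\si_0(\La_0)$ produces $X_n$ with an induced free involution $\si_n$. These surgeries change neither $\pi_1$ nor the intersection form, so $X_n$ is again homeomorphic to $\cpk\#5\cpkk$; and since $b_2^+(X_0)=1$ while $b_2^-(X_0)=5\ge 2$, the small-perturbation Seiberg--Witten invariants of $X_n$ are well defined and are computed from those of $X_0$ by the surgery formulas, yielding infinitely many pairwise distinct values, all different from those of $\cpk\#5\cpkk$. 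Hence the $X_n$ are pairwise non-diffeomorphic and none is diffeomorphic to $\cpk\#5\cpkk$; as in the reverse-engineering construction of \cite{FPS}, each $X_n$ is moreover minimal and, having $b_1=0$ and non-trivial Seiberg--Witten invariants, irreducible. The quotients $W_n=X_n/\si_n$ are then pairwise non-diffeomorphic and none is diffeomorphic to $Z_1\#2\cpkk$ (by the lifting argument), and each $W_n$ is irreducible: a smooth connected-sum splitting $W_n=A\#B$ has, since $\Z/2\Z$ is not a non-trivial free product, a simply connected summand, say $B$, and it lifts to a splitting $X_n=\widetilde A\#B\#B$, forcing $B$ to be a homotopy $4$-sphere. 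Finally $W_n$ is homeomorphic to $Z_1\#2\cpkk$: it is smooth (so its Kirby--Siebenmann invariant vanishes), has $\pi_1=\Z/2\Z$, definite intersection form of rank $2$ and signature $-2$, and is non-spin with non-spin universal cover---exactly like $Z_1\#2\cpkk$---so by the topological classification of closed oriented $4$-manifolds with fundamental group $\Z/2\Z$ it is homeomorphic to $Z_1\#2\cpkk$. This proves the theorem.

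The hard part is the construction of the equivariant base $X_0$: one has to choose the symplectic block and the sequence of equivariant Luttinger/torus surgeries so that \emph{simultaneously} the fundamental group of the total space becomes exactly $\Z/2\Z$ (neither trivial nor larger), the covering involution stays \emph{free}, the homeomorphism types of $X_0$ and of $X_0/\si_0$ come out as required---in particular the $w_2$-type of the universal cover must match that of $Z_1\#2\cpkk$, which is why $Z_1$ is the convenient model---and enough symplectic geometry is retained to compute Seiberg--Witten invariants after the later torus surgeries. Controlling the effect of the equivariant surgeries on $\pi_1$, via the Luttinger presentation of the meridians, while keeping the involution free, is the delicate step; once the model is in place the remainder is a by-now standard Seiberg--Witten computation.
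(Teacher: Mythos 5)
Your overall strategy is the same as the paper's: pass to the double cover, build simply connected manifolds homeomorphic to $\cpk\#5\cpkk$ carrying free involutions, distinguish the covers by Seiberg--Witten invariants, and identify the quotients with $Z_1\#2\cpkk$ via the Hambleton--Kreck classification (your lifting, irreducibility-of-the-quotient, and homeomorphism arguments all match what the paper does). The problem is that, as written, this is a plan rather than a proof: the entire mathematical content of the theorem is concentrated in the step you yourself flag as ``the hard part'' and then do not carry out. You never exhibit a symplectic block with a \emph{free} involution together with an invariant configuration of Lagrangian tori for which the equivariant Luttinger/torus surgeries provably kill $\pi_1$ (exactly, not just abelianize it) while the action stays free and the characteristic numbers come out right; nor do you produce the distinguished invariant orbit $\{\La_0,\si_0(\La_0)\}$ of null-homologous tori with product neighbourhoods that your second stage needs. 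The assertions that the surgeries on that orbit ``change neither $\pi_1$ nor the intersection form'' and that the small-perturbation SW invariants of $X_n$ are ``computed from those of $X_0$ by the surgery formulas, yielding infinitely many pairwise distinct values'' are exactly the statements that require proof: torus surgery on a null-homologous torus can change $\pi_1$, Fintushel--Stern knot surgery is not performed along null-homologous tori, and the $b_2^+=1$ chamber issues and the actual values of the invariants have to be controlled, not invoked.

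For comparison, the paper resolves these points by a specific device you do not have: it doubles $T^4\#2\cpkk$ along a genus-2 surface $G_2$ of square $0$, gluing the two copies by $\psi\times\mathrm{id}$ where $\psi$ is a fixed-point-free orientation-reversing involution of $G_2$, so that the swap of the two halves is automatically a free involution; the integer parameter enters not through an auxiliary null-homologous torus but as the coefficient $-n$ of the (Luttinger) torus surgeries performed in each half, so the same construction produces the whole family at once. Simple connectivity of $X_n$ is then a genuinely delicate Seifert--Van Kampen computation (using the Baldridge--Kirk presentation of the meridians and push-offs and the explicit action of $\psi$ on $\pi_1(G_2)$), and $SW_{X_n}(\pm L)=\pm n^2$ is obtained from the symplectic fiber sum $U$ via the Morgan--Mrowka--Szab\'o formula together with a vanishing lemma for the $p_i=0$ surgeries; irreducibility follows because the only basic classes satisfy $(L-(-L))^2=16\neq -4$. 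Until you supply an equivariant construction with a verified $\pi_1$ computation and an actual SW calculation of this kind, the proposal has a genuine gap.
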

Recall that a smooth, closed four-manifold $X$ is irreducible, if a
smooth decomposition $X=X_1\# X_2$ into connected sum implies that
either $X_1$ or $X_2$ is homeomorphic to the four-sphere $S^4$.

\begin{thm}\label{thm:b2=1}
  The smooth four-manifold $Z_1\#\cpkk$ admits infinitely many
  irreducible exotic smooth structure.
\end{thm}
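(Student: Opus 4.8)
The plan is to pass to the universal double cover and thereby reduce the theorem to a statement about exotic rational surfaces carrying free involutions. Since $\pi_1(Z_1\#\cpkk)=\Z/2\Z$, the universal cover of $Z_1\#\cpkk$ is obtained from the universal cover $S^2\times S^2$ of $Z_1$ by forming the connected sum with $\cpkk$ at each of the two points lying over the connected-sum point; hence it is diffeomorphic to $S^2\times S^2\#2\cpkk\cong\cpk\#3\cpkk$, carrying the free involution $\tau$ that is the antipodal map on each $S^2$ factor together with the interchange of the two $\cpkk$ summands, the quotient of this involution being $Z_1\#\cpkk$. Now a diffeomorphism between closed $4$-manifolds with $\pi_1=\Z/2\Z$ lifts to a $\tau$-equivariant diffeomorphism of their universal covers, in particular to a diffeomorphism of the covers; and any essential smooth connected-sum decomposition $Y=Y_1\#Y_2$ of a closed $4$-manifold $Y$ with $\pi_1(Y)=\Z/2\Z$ has a simply connected summand, say $Y_2$, so that the cover of $Y$ contains $Y_2\#Y_2$ as a connected summand. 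Hence an irreducible, exotic cover forces an irreducible, exotic quotient, and it suffices to produce infinitely many pairwise non-diffeomorphic \emph{irreducible} smooth $4$-manifolds $W_n$, each homeomorphic to $\cpk\#3\cpkk$, none diffeomorphic to the standard $S^2\times S^2\#2\cpkk$, and each admitting a free involution $\tau_n$ with $W_n/\tau_n$ homeomorphic to $Z_1\#\cpkk$.

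To build the $W_n$ I would carry out an explicitly $\tau$-equivariant version of the standard ``reverse engineering'' recipe for small rational surfaces. Starting from an equivariant model --- an equivariant fiber sum of two copies of a building block, or equivariant Luttinger surgeries performed directly on $(S^2\times S^2\#2\cpkk,\tau)$ along $\tau$-invariant Lagrangian tori and along disjoint $\tau$-orbit pairs $T\sqcup\tau T$ of Lagrangian tori (each such pair descending to a single torus surgery in $Z_1\#\cpkk$) --- one arranges that the cover stays simply connected (equivalently, the quotient keeps $\pi_1=\Z/2\Z$) and becomes a \emph{minimal} symplectic $4$-manifold homeomorphic to $\cpk\#3\cpkk$ with nontrivial Seiberg--Witten invariant. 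For the infinite family I would then perform, $\tau$-equivariantly along a further $\tau$-orbit pair of essential square-zero Lagrangian tori with simply connected complement, the $1/n$-torus surgeries of~\cite{FPS} (or, alternatively, Fintushel--Stern knot surgeries with knots of distinct Alexander polynomials). These produce $W_n$ with free involutions $\tau_n$, all homeomorphic to $\cpk\#3\cpkk$, whose quotients have fundamental group $\Z/2\Z$, definite rank-one intersection form, vanishing Kirby--Siebenmann invariant and the same $w_2$-type as $Z_1\#\cpkk$; by the Hambleton--Kreck classification of closed oriented $4$-manifolds with fundamental group $\Z/2\Z$ (together with Freedman's theorem) the quotients $W_n/\tau_n$ are therefore homeomorphic to $Z_1\#\cpkk$.

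It then remains to separate the $W_n$ and to establish irreducibility, and I would do both on the covers. Although $b_2^+(W_n)=1$, the Seiberg--Witten invariants in the chamber determined by the symplectic form --- combined with the $b_1=0$ wall-crossing formula and the surgery formula for $1/n$-torus surgery (respectively the knot-surgery formula) --- show that the Seiberg--Witten basic classes of $W_n$ and the corresponding invariants genuinely change with $n$; since the standard $\cpk\#3\cpkk$ is a rational surface with vanishing Seiberg--Witten invariant in every chamber, each $W_n$ is exotic and the $W_n$ are pairwise non-diffeomorphic. Irreducibility of each $W_n$ follows from the fact that a minimal symplectic $4$-manifold that is neither rational nor ruled is irreducible: in particular $W_n$ contains no smoothly embedded sphere of square $-1$, so it is not of the form $A\#\cpkk$, and Donaldson's diagonalizability theorem then rules out any negative definite connected summand. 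Feeding this back into the reduction of the first paragraph yields infinitely many irreducible exotic smooth structures on $Z_1\#\cpkk$.

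The hard part will be the equivariant bookkeeping: one must exhibit Lagrangian tori and surgery curves that are genuinely compatible with the free $\tau$-action and descend to $Z_1\#\cpkk$, while at the same time (i) keeping $\pi_1$ of the quotient \emph{exactly} $\Z/2\Z$ --- the meridians created by the surgeries must be controlled in $\pi_1$ --- and (ii) making the cover symplectic, minimal, and Seiberg--Witten nontrivial. Verifying the homeomorphism type of the quotients via the $\pi_1=\Z/2\Z$ classification, and controlling the chamber structure of $b_2^+=1$ Seiberg--Witten theory on the covers, are the remaining technical points, although for these small rational-surface covers the latter is routine.
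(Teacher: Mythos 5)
Your high-level strategy is the same as the paper's: pass to the universal double cover, produce infinitely many irreducible, simply connected manifolds homeomorphic to $\cpk\#3\cpkk$ carrying free involutions, identify the quotients topologically with $Z_1\#\cpkk$ via the Hambleton--Kreck classification, distinguish the quotients by the Seiberg--Witten invariants of their covers, and descend irreducibility from cover to quotient. Those reduction steps (a diffeomorphism of quotients lifts to the covers; a nontrivial connected sum decomposition of the quotient has a simply connected summand which doubles in the cover) are correct and are indeed how the paper argues.

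The genuine gap is that the existence of the equivariant family $(W_n,\tau_n)$ is precisely the content of the theorem, and your proposal does not construct it: you defer exactly the hard steps, namely exhibiting a configuration of Lagrangian tori, surgery curves and a free involution compatible with them so that the cover is simply connected, Seiberg--Witten nontrivial and minimal. This cannot be outsourced to the known exotic $\cpk\#3\cpkk$'s of Akhmedov--Park or Baldridge--Kirk, since those are not known to admit free involutions; and your alternative of performing equivariant Luttinger surgeries directly on the standard $(S^2\times S^2\#2\cpkk,\tau)$ is not known to produce any exotic manifold (note also that the antipodal-times-antipodal involution is anti-symplectic for the product form, so even setting up ``equivariant Luttinger surgery'' there requires care). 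The paper instead obtains the covers by a symmetric doubling: two copies of $T^4\#\cpkk$ with Luttinger/torus surgeries of coefficients $(-1,-n)$, cut along a neighborhood of a genus-2 surface $G_1$ built from a braided torus, glued by a free orientation-reversing involution of $G_1\times S^1$, so that swapping the two copies is the desired free involution. The crux, which your sketch leaves untouched, is then the proof that $Y_n$ is simply connected: this requires a specific trivialization of the neighborhood of $G_1$, careful basepoint and path-homotopy bookkeeping, and the computation of the boundary inclusion map on $\pi_1$ (in particular the relation $F(u_1)=s_1^2\cdot F(\mu)^{-1}$ coming from the braiding), followed by the exact computation $SW_{Y_n}(\pm K)=\pm n^2$ via the Morgan--Mrowka--Szab\'o torus-surgery formula together with a vanishing lemma, rather than a chamber/wall-crossing argument. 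As it stands, your text is a plausible plan that mirrors the paper's architecture but omits the construction and the $\pi_1$ and SW verifications that constitute the actual proof.
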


\begin{rem}
  At the time of writing the manifolds given by Theorem~\ref{thm:b2=1} have the smallest Euler characteristic among known exotic
  smooth closed oriented four-manifolds with finite fundamental groups,
  or indeed with
  $b_1=0$.
  \end{rem}

The technique applied in the proof of Theorem~\ref{thm:b2=1} allows us to
show that by blowing up the exotic manifolds they stay exotic leading
to the proof of Theorem~\ref{thm:main}. Combining with earlier
results in \cite{definite} for the $b_2=4$ case we immediately have

\begin{cor}\label{cor:irr}
  Let $X$ be a smooth, closed, oriented, topological four-manifold  with
  $\pi_1(X)=\Z /2\Z$, with definite intersection form and with second Betti
number $b_2(X)=1,2,4$. Then $X$ admits infinitely many non-diffeomorphic
irreducible smooth structures. \qed
\end{cor}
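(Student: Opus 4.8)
The plan is to read the corollary off from the three exotic constructions now at hand --- Theorem~\ref{thm:b2=1} for $b_2=1$, Theorem~\ref{thm:b2=2} for $b_2=2$, and the results of \cite{definite} for $b_2=4$ --- once we identify the homeomorphism type of an arbitrary such $X$ with that of the explicit model $Z_1\#b_2\cpkk$. Since $X$ is smooth and definite, Donaldson's theorem gives that its intersection form is $\langle 1\rangle^{\oplus b_2}$ or $\langle -1\rangle^{\oplus b_2}$; replacing $X$ by $\overline X$ if necessary --- which affects neither irreducibility nor the count of smooth structures --- we may assume it is negative definite, hence isometric to the form of $b_2\cpkk$. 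This form is odd, so $X$ is not spin; its universal cover $\widetilde X$ has $b_2(\widetilde X)=2b_2\ge 2$ and again carries an odd form, so $w_2(\widetilde X)\neq 0$; and the Kirby--Siebenmann invariant of $X$ vanishes since $X$ is smoothable. By the classification of closed oriented topological four-manifolds with $\pi_1=\Z/2\Z$ (Hambleton--Kreck, building on Freedman), the isometry type of the intersection form, the $w_2$-type and the Kirby--Siebenmann invariant together determine $X$ up to orientation-preserving homeomorphism. The model $Z_1\#b_2\cpkk$ shares all three: its universal cover is $S^2\times S^2\#2b_2\cpkk$, which for $b_2\ge 1$ is non-spin, so $Z_1\#b_2\cpkk$ has the same $w_2$-type, has vanishing Kirby--Siebenmann invariant and the same negative definite form. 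Hence $X$ is homeomorphic to $Z_1\#b_2\cpkk$ (or, in the positive definite case, to its orientation reverse).

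Granting this identification, the corollary is immediate. For $b_2=1$, Theorem~\ref{thm:b2=1} produces infinitely many pairwise non-diffeomorphic irreducible smooth four-manifolds, each homeomorphic to $Z_1\#\cpkk$ and hence to $X$; these are, by definition, infinitely many non-diffeomorphic irreducible smooth structures on $X$. For $b_2=2$ one argues identically from Theorem~\ref{thm:b2=2}. For $b_2=4$, the fixed-point-free involution quotients of $\cpk\#9\cpkk$ studied in \cite{definite} have $\pi_1=\Z/2\Z$, Euler characteristic $6$ and definite intersection form, so by the previous paragraph they are homeomorphic to $Z_1\#4\cpkk$, and the infinitely many irreducible exotic smooth structures constructed there furnish what is needed. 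Finally, if $X$ is positive definite we apply the argument to $\overline X$ and reverse orientations once more, using that irreducibility and the relation ``homeomorphic but not diffeomorphic'' are preserved under orientation reversal.

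The only step demanding genuine care is the homeomorphism identification above: one must compute the $w_2$-type and the Kirby--Siebenmann invariant correctly and verify that, once the sign of definiteness is fixed, there is a \emph{unique} smoothable topological type for every $b_2\ge 1$ --- even if more than one exists for $b_2=0$, they coalesce after a single blow-up, since the two candidate universal covers $S^2\times S^2\#2b_2\cpkk$ and $\cpk\#(2b_2+1)\cpkk$ are then homeomorphic. This is exactly the topological input also used in the proof of Theorem~\ref{thm:main}; beyond it, the corollary is a direct appeal to Theorems~\ref{thm:b2=1} and~\ref{thm:b2=2} and to \cite{definite}.
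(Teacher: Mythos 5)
Your overall route is the paper's: identify the homeomorphism type of $X$ with $Z_1\#b_2\cpkk$ (after possibly reversing orientation) and then quote Theorem~\ref{thm:b2=1}, Theorem~\ref{thm:b2=2} and \cite{definite} for $b_2=1,2,4$ respectively. The applications of the three constructions, the orientation-reversal reduction, and the $b_2=4$ discussion all match what the paper intends.

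There is, however, a genuine gap in your homeomorphism identification, and it hits exactly one of the three cases you claim. To apply the Hambleton--Kreck classification you must know that the universal cover $\widetilde X$ is non-spin, and you justify this by asserting that $\widetilde X$ ``again carries an odd form.'' That is not an algebraic consequence of $X$ being definite: $\widetilde X$ has $b_2^+=1$, $b_2^-=1+2b_2$ (note also the slip $b_2(\widetilde X)=2+2b_2$, not $2b_2$), and an indefinite unimodular form with these invariants can be even precisely when $8\mid 2b_2$. For $b_2=1,2$ evenness is indeed impossible, but for $b_2=4$ the even form $H\oplus E_8(-1)$ has the right rank and signature, so oddness of the cover's form must be ruled out using smoothness --- e.g.\ Rokhlin's theorem ($\sigma(\widetilde X)=-8$ is not divisible by $16$) or Donaldson's Theorem~B, which is exactly the input the paper builds into Theorem~\ref{thm:TopClassification}. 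The simplest repair is to cite Theorem~\ref{thm:TopClassification} directly (it states that Euler characteristic determines the homeomorphism type in this setting), or to replace your ``odd form'' assertion by the Rokhlin/Donaldson argument; with that patch the rest of your proof goes through as in the paper.
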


For other values of $b_2$, the existence of irreducible examples in
Corollary~\ref{cor:irr} are still open. In this paper we use double
covers and show that $X$ is exotic and irreducible since its double
cover is exotic and irreducible.  When $b_2>4$, one would need
irreducible manifolds homeomorphic to $\cpk\# k\cpkk$ for $k =
1+2b_2>9$ with a free $\Z/2\Z$ symmetry.  The existence of an
irreducible smooth structure on $\cpk \# k \cpkk$ with $k>9$ is,
however, open (even without adding the symmetry property).  Similarly
in the $b_2=0$ case (see Remark~\ref{rem:b2=0}), one would need an
exotic $S^2\times S^2$ with a free $\Z/2\Z$ symmetry, and even the
existence of an exotic $S^2\times S^2$ is unknown. The $b_2=3$ case is
more promising: at least in this case there are plenty of known
irreducible smooth structures on $\cpk\# 7\cpkk$; one only needs to
find the appropriate free involutions on them. We hope to return to
this question in future work.

The paper is organized as follows. In Section~\ref{sec:background} we
discuss some preliminaries. In Section~\ref{sec:b2=2} we study
examples with $b_2=2$, leading to the proof of Theorem~\ref{thm:b2=2},
and in Section~\ref{sec:b2=1} we study examples with $b_2=1$,
and prove Theorem~\ref{thm:b2=1} and ultimately Theorem~\ref{thm:main}.

{\bf Acknowledgements}: The first author was partially supported by
the \'Elvonal (Frontier) grant KKP144148 of the NKFIH. The second
author was partially supported by NSF grant DMS-1904628 and the Simons
Grant {\it New structures in low dimensional topology. }

\section{Background}
\label{sec:background}

This section is devoted to collect some of the background results we
will need in our subsequent discussions.

\subsection{Topological classification of definite four-manifolds with
  fundamental group $\Z/2\Z$}

Suppose that $X$ is a closed, oriented four-manifold with finite
cyclic fundamental group, which admits a smooth structure, and let
$\widetilde{X}$ denote the universal cover of $X$.  According to
\cite[Theorem~C]{HK}, such manifolds are classified up to
homeomorphism by their
intersection form on $H_2(X; \Z)/Tors$, and whether $X$ and
$\widetilde{X}$ are spin or not. (Notice that if $X$ is spin, then
$\widetilde{X}$ is also spin.)
Combining this result with theorems of Donaldson, we get

\begin{thm}\label{thm:TopClassification}
  The smooth manifolds $X_1$ and $X_2$ with fundamental group
  $\Z/2\Z$, negative definite intersection form and second
  Betti number $b_2>0$ are homeomorphic if and only if
  their Euler characteristics are equal.
\end{thm}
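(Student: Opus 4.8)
The plan is to deduce this from the Hambleton–Kreck classification (\cite[Theorem~C]{HK}), quoted in the paragraph above, together with Donaldson's diagonalization theorem. By \cite{HK}, two closed oriented four-manifolds with finite cyclic fundamental group and admitting a smooth structure are homeomorphic if and only if they have isometric intersection forms on $H_2(\,\cdot\,;\Z)/\mathrm{Tors}$ and the same ``spin type'' — meaning that $X_1$ is spin iff $X_2$ is spin, and likewise for the universal covers $\widetilde{X_1}$, $\widetilde{X_2}$ (the latter condition being implied by the former for these manifolds). So it suffices to show that, for smooth $X$ with $\pi_1(X)=\Z/2\Z$, negative definite form and $b_2(X)=b>0$, both the intersection form and all the spin data are determined by the Euler characteristic $\chi(X)=2 - 0 + b = 2+b$ (using $b_1(X)=0$ since $H_1(X;\Q)=0$), i.e.\ by $b$ alone.

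First I would pin down the intersection form. Since $X$ is smooth and its intersection form on $H_2(X;\Z)/\mathrm{Tors}\cong\Z^{b}$ is negative definite, Donaldson's theorem forces it to be the standard diagonal form $b\langle -1\rangle$, independent of $X$. Next I would pin down the spin type of the universal cover. The double cover $\widetilde X$ is a smooth closed oriented simply connected four-manifold with negative definite intersection form of rank $b_2(\widetilde X)=2b>0$; again by Donaldson it is $2b\langle -1\rangle$, which is odd, so $\widetilde X$ is \emph{not} spin. Since $X$ is not spin when $\widetilde X$ is not spin (the spin condition pulls back), $X$ itself is not spin either. Thus for every such $X$ the intersection form is $b\langle-1\rangle$ and neither $X$ nor $\widetilde X$ is spin — all of the Hambleton–Kreck invariants are completely determined by $b$, hence by $\chi(X)$.

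Therefore, if $X_1$ and $X_2$ are two such manifolds with $\chi(X_1)=\chi(X_2)$, then $b_2(X_1)=b_2(X_2)$, their intersection forms agree, and their spin types agree; \cite[Theorem~C]{HK} gives a homeomorphism. Conversely, homeomorphic manifolds obviously have equal Euler characteristics. I expect the only subtle point to be the bookkeeping of exactly which invariants appear in \cite{HK} for $\pi_1=\Z/2\Z$ (in particular confirming that, beyond the intersection form and the two spin bits, no extra $w_2$-type or secondary invariant intervenes in the definite case) and checking that the torsion in $H_2$ plays no role; once the statement of \cite[Theorem~C]{HK} is taken as given in the form quoted above, the argument is just the two applications of Donaldson's theorem outlined here.
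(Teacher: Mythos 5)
Your overall strategy---reduce everything to \cite[Theorem~C]{HK} and use Donaldson to pin down the intersection form and the spin data---is exactly the paper's, and your treatment of the form of $X$ itself is fine. The genuine gap is in the step determining the spin type of the universal cover: the double cover $\widetilde{X}$ is \emph{not} negative definite, and it does not have $b_2(\widetilde{X})=2b$. What multiply under the double cover are $\chi$ and $\sigma$, not $b_2$: since $b_1(X)=0$, we have $\chi(\widetilde{X})=2\chi(X)=2b+4$, hence $b_2(\widetilde{X})=2b+2$ (as $\widetilde{X}$ is simply connected), while $\sigma(\widetilde{X})=2\sigma(X)=-2b$. Thus $b_2^+(\widetilde{X})=1$ and $b_2^-(\widetilde{X})=2b+1$, so Donaldson's diagonalization theorem does not apply to $\widetilde{X}$, and your conclusion that its form is $2b\langle-1\rangle$, hence odd, hence non-spin, is unjustified. (A sanity check: the double cover of $Z_1\#\cpkk$, which has $b_2=1$, is homeomorphic to $\cpk\#3\cpkk$, not to a rank-$2$ definite manifold --- this is precisely why the paper's exotic examples have universal covers $\cpk\#k\cpkk$.)

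The conclusion you want (neither $X$ nor $\widetilde{X}$ is spin) is correct, but it needs a different argument, which is what the paper supplies. Non-spinness of $X$ comes directly from the diagonalization of the form of $X$ itself: an odd form on $H_2(X;\Z)/\Tor$ with $b_2>0$ rules out spin (and then $X$ spin $\Rightarrow \widetilde{X}$ spin is not what is needed, since the spin type of $\widetilde{X}$ is an independent invariant in \cite{HK}). Non-spinness of $\widetilde{X}$ is instead obtained from Donaldson's Theorem~B (\cite[Theorem~1.3.2]{DK}): if $\widetilde{X}$ were spin, its intersection form would be even with $b_2^+(\widetilde{X})=1$, which forces $b_2^-(\widetilde{X})=1$, contradicting $b_2^-(\widetilde{X})=2b+1>1$ when $b>0$. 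With that replacement (and keeping your correct computation for the form of $X$), the rest of your reduction to \cite[Theorem~C]{HK} goes through as in the paper.
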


\begin{proof}
  By Donaldson's diagonalizability theorem~\cite{Don2},
   the intersection forms of
  $X_1$ and $X_2$ depend only on the size of $b_2$, hence if the Euler
  characteristics are equal, the intersection forms are isomorphic.
  When $b_2>0$, the diagonalizability theorem also implies that the
  four-manifolds are non-spin. The universal (double) cover
  $\widetilde{X_i}$ is also non-spin, as $b_2^+(\widetilde{X_i})$ is
  equal to 1, but (once again by $b_2(X_i)>0$) we have
  $b_2^-(\widetilde{X_i})>1$, contradicting Donaldson's Theorem~B (see
  \cite[Theorem~1.3.2]{DK}).  Therefore both $X_i$ and
  $\widetilde{X_i}$ are non-spin for $i=1,2$, hence
  \cite[Theorem~C]{HK} outlined above implies the result.
  \end{proof}

\begin{rem}\label{rem:b2=0}
  The case $b_2=0$ is special: in this case there are
  manifolds $Z_0, Z_1$ (as explained in \cite{definite}, see also
  \cite{Hil}) with trivial intersection form, $\pi _1=\Z/2\Z$ and
  $Z_0$ spin, $Z_1$ non-spin. (Their universal cover is $S^2\times
  S^2$.) According to \cite[Lemma~12.3(2)]{Hil} the universal cover of
  an oriented, closed, smooth four-manifold with $\pi _1=\Z/2\Z$ and
  Euler characteristic $2$ is homeomorphic to $S^2\times S^2$, so
  these are the only two examples up to homeomorphism.  The existence
  of exotic structures on $Z_0,Z_1$ (and indeed on $S^2\times S^2$) is
  still open.
\end{rem}

\subsection{Torus surgeries}
We briefly recall the concept of surgery along a torus in a
four-manifold from \cite{FPS}.
Suppose that $X$ is a smooth four-manifold and $T\subset X$ is an
oriented torus with 0 self-intersection number (that is, with trivial
normal bundle).  Let $f$ be a trivialization of the normal bundle $\nu
T$, that is, a \emph{framing} on $T$.  Using such a trivialization
$f$, an oriented simple closed curve $C\subset T$ gives rise to an
oriented simple closed curve $c_f\subset \partial (\nu
T)=T^3$. Furthermore, let $\mu _T$ denote the (oriented) meridian of
$T$, considered also in $\partial (\nu T)$.  For a rational number
$\frac{p}{q}$, let $(X, T, f, \gamma , \frac{p}{q})$ denote the
four-manifold we get by deleting ${\rm {int}}\, \nu T$, and gluing
back $T^2\times D^2$ (the union of a four-dimensional 2-handle, two
3-handles and a 4-handle) by attaching the 2-handle along a simple
closed curve representing the homology class $p[\mu _T] + q [c_f]$.
This construction is known as the \emph{torus surgery} on $T$ along
the curve $C$ with framing $f$ and coefficient $\frac{p}{q}$.

\subsection{Luttinger surgeries}
When the four-manifold $X$ admits a symplectic structure
$\omega$ and the torus $T\subset (X, \omega )$ is Lagrangian (i.e., $\omega
\vert _T=0$), the torus  $T$ comes with a canonical (Lagrangian) framing:
By the Lagrangian neighbourhood
theorem an appropriate small normal neighbourhood of $T$ is
symplectomorphic to a neighbourhood of the zero-section of the
cotangent bundle $T^*T^2\to T^2$ of the 2-torus $T^2$.
As $T^*T^2$ is trivialized, this identification equips $T\subset X$
with its \emph{Lagrangian framing}.  Indeed, the push-off of $T$ along
this framing is also Lagrangian.  According to \cite{ADK}, torus
surgery on a Lagrangian torus along any simple closed curve, with the
Lagrangian framing and with surgery coefficient $\frac{1}{k}$ ($k\in
\Z$) results a four-manifold $(X, T, \gamma , \frac{1}{k})$ with a
symplectic structure, which is equal to $\omega$ outside of a suitably
chosen neighbourhood of $T$.  This special case of torus surgery is
called \emph{Luttinger surgery} \cite{ADK}.

\subsection{Seiberg-Witten invariants}
In this paper we will distinguish smooth structures on 4-manifolds
(with fundamental group $\Z /2\Z$) by showing that their universal
covers are non-diffeomorphic. For distinguishing those simply connected
manifolds we will apply Seiberg-Witten invariants.

Recall that the Seiberg-Witten function on a smooth, closed,
oriented four-manifold $X$ is a map
\[
SW_X\colon Spin^c(X) \to \Z.
\]
For manifolds with $b_2^+(X)>1$ or with $b_2^+(X)=1$ and $b_2^-(X)\leq
9$ this map is --- up to sign --- a diffeomorphism invariant, that is,
for a diffeomorphism $f\colon X_1\to X_2$ we have $SW_{X_2}(K)=\pm
SW_{X_1}(f^*(K))$. 

When $H_1(X; \Z )$ has no $\Z/ 2\Z$ torsion, $spin^c$ structures ${\bf s}$
on $X$ are naturally identified with characteristic cohomology classes
in $H^2(X; \Z)$, that is, with  classes whose mod 2 reductions agree with
$w_2(X)$. The map is given by associating to ${\bf s}$ its first Chern
class $c_1({\bf s})$.

The Seiberg-Witten function $SW_X$ has finite support;
elements of the support of $SW_X$ are called \emph{basic classes} of
$X$. Furthermore $SW_X(-K)=\pm SW_X(K)$, and it satisfies the
adjunction inequality: for an embedded, closed, oriented surface
$\Sigma \subset X$ of genus $g(\Sigma)>0$ with self-intersection
$[\Sigma ]^2$ non-negative, we have
\[
2g(\Sigma )-2\geq [\Sigma ]^2+\vert K([\Sigma ])\vert
\]
once $SW_X(K)\neq 0$.
A further important property of $SW_X$ is that for a symplectic
four-manifold $(X, \omega )$
(as above) with $b_2^+(X)>1$
we have
\begin{equation}\label{eq:symp}
SW_X(\pm c_1(X, \omega ))=\pm 1.
\end{equation}

\begin{lem}
  If a simply connected smooth four-manifold $X$ (with $b_2^+(X)>1$ or
  $b_2^+(X)=1$ and $b_2^- \leq 9$) has at least one Seiberg-Witten
  basic class
  $\{K_1, ..., K_n\}$  and 
  $(K_i - K_j)^2$ is never $-4$, then $X$ is irreducible.
\end{lem}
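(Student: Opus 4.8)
The plan is to argue by contradiction: suppose $X = X_1 \# X_2$ is a smooth connected sum decomposition with neither summand homeomorphic to $S^4$. Since $X$ is simply connected, both $X_1$ and $X_2$ are simply connected. If both had positive $b_2$, then both would have $b_2^\pm$ of one of them positive in a way that... more precisely, I would first recall the standard fact that the Seiberg-Witten invariant of a connected sum $X_1 \# X_2$ with $b_2^+(X_1), b_2^+(X_2) > 0$ vanishes identically (the reducible solution on the neck forces $SW \equiv 0$). Since $X$ has a nonzero basic class by hypothesis, we cannot be in that case, so one of the summands, say $X_2$, has $b_2^+(X_2) = 0$; being simply connected and definite (negative definite, after fixing orientations), Donaldson's diagonalizability theorem gives that $X_2$ is homeomorphic to a connected sum of copies of $\overline{\mathbb{CP}}^2$, i.e. its intersection form is $m\langle -1\rangle$ for some $m \geq 1$ (the case $m = 0$ being excluded since $X_2 \not\cong S^4$).

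Next I would use the blow-up formula for Seiberg-Witten invariants to analyze how the basic classes of $X$ are constrained by the existence of these $\overline{\mathbb{CP}}^2$ summands. Writing $e_1, \dots, e_m \in H^2(X; \Z)$ for the classes Poincaré dual to the exceptional spheres coming from $X_2$, the blow-up formula says that $K$ is a basic class of $X$ if and only if $K = K_0 \pm e_1 \pm \cdots \pm e_m$ where $K_0$ is (pulled back from) a basic class of the blow-down; in particular each $e_i$ has coefficient $\pm 1$ in every basic class. Then, since $X$ has at least one basic class, it has a pair of basic classes $K_i, K_j$ differing exactly in the sign of $e_1$ (flip one sign, keep the rest), so $K_i - K_j = \pm 2e_1$ and hence $(K_i - K_j)^2 = 4 e_1^2 = -4$. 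This contradicts the hypothesis that $(K_i - K_j)^2$ is never $-4$.

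The one subtlety to handle carefully is the degenerate case $m = 0$ in the paragraph above: if $X_2$ is negative definite and simply connected with $b_2(X_2) = 0$ then $X_2$ is homeomorphic to $S^4$, which is precisely the situation allowed by the definition of irreducibility, so there is nothing to prove. Thus a genuine counterexample to irreducibility must have $m \geq 1$, and then the argument closes. I should also note that the orientation conventions must be tracked: one of the two summands has negative definite form (so Donaldson applies to it with its given orientation) while reversing orientation if necessary — since $X$ itself is definite, both summands are definite of the same sign, and whichever one has $b_2^+ = 0$ is the negative definite one after possibly globally reversing orientation, which does not affect $(K_i - K_j)^2$.

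The main obstacle, such as it is, is purely bookkeeping: one must invoke the connected-sum vanishing theorem for Seiberg-Witten invariants (valid in the stated range of $b_2^+$) and the blow-up formula in the $b_2^+ = 1$ case with the appropriate chamber/small-perturbation conventions, and check that these apply under the hypotheses $b_2^+(X) > 1$ or $b_2^+(X) = 1, b_2^- \leq 9$. Everything else is a short deduction. I expect no genuine difficulty beyond citing the standard gauge-theoretic input correctly.
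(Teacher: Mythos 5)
Your proposal is correct and follows essentially the same route as the paper's proof: the non-vanishing of $SW_X$ forces one summand of any decomposition to be negative definite, Donaldson's diagonalizability theorem together with the blow-up formula show the basic classes would come in pairs differing by $2E_1$ (square $-4$) unless the diagonal rank is zero, and then Freedman's theorem identifies that summand with $S^4$. One aside in your write-up is off --- $X$ itself is not assumed definite in this lemma (in the paper's application $b_2^+=1$, $b_2^-=5$), and no orientation reversal is needed --- but your main argument never uses that remark, since a simply connected summand with $b_2^+=0$ is automatically negative definite in its given orientation.
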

\begin{proof}
  Suppose that $X$ has a connected sum decomposition as $X= X_1\#X_2$.
  The non-vanishing of the Seiberg-Witten invariant $SW_X$ of $X$
  implies that one of $X_1$ or $X_2$ has negative definite
  intersection form; suppose it is $X_2$.  Donaldson's
  diagonalizability theorem \cite{Dona, Do, Don2} implies that the
  intersection form of $X_2$ is diagonalizable. If $E_1,\ldots ,E_k$
  is a diagonal basis (with $E_i^2=-1$), then Seiberg-Witten basic
  classes of $X$ are given by $K'_j \pm E_1 \ldots \pm E_k$ where $K'_j$
  are the basic classes of $X_1$. As $(K_i-K_j)^2$ cannot be equal to
  $-4$ for any choice of $i,j$, it follows that $k=0$, and so $X_2$ is
  a homotopy $S^4$ and so it is homeomorphic to $S^4$ by Freedman's
  theorem.
  \end{proof}

\subsection{Seiberg-Witten invariants and torus surgeries}
The transformation of the Seiberg-Witten invariant under torus surgery
can be determined using the formula from \cite{MMSz}. For the version
we need, we fix a smooth, oriented, compact four-manifold $M$ with
three-torus boundary $\partial M = T^3$, and a spin$^c$ structure {\bf
  s} whose first Chern class restricts trivially to $\partial M$. Fix
two homology classes $a, b \in H_1(T^3)$ given by $a= [S^1\times
  pt\times pt]$ and $b = [pt\times S^1\times pt]$. Given a pair of
integers $p,q$, where $(p,q) \in \Z\times \Z$ is a primitive element,
let $M(p,q)$ denote the effect of torus surgery, where we glue $M$ and
$D^2\times T^2$ along $\phi$ to get $M(p,q) = M\cup _\phi D^2\times
T^2$, so that in homology $\phi _{\ast}$ maps $[\partial (D^2)]$ to
$pa+qb$. Let $S(p,q)$ denote the set of spin$^c$ structures on
$M(p,q)$ whose restriction to $M$ agrees with ${\bf s}$ and define
$F(p,q)$ as the sum of the Seiberg-Witten invariants of classes in
$S(p,q)$. Then, according to \cite{MMSz} we have
\begin{equation}\label{eq:formula}
  F(p,q)= p\cdot F(1,0)+ q\cdot F(0,1).
\end{equation}

\section{Exotic structures on definite manifolds with $b_2=2$}
\label{sec:b2=2}

By finding a genus-2 surface with self-intersection 0 in the twice
blown-up four-torus $T^4\# 2\cpkk$, taking normal connected sum of two
copies and applying four appropriate torus surgeries (the last two
depending on an integer parameter $n$) we get a sequence of manifolds
$X_n$ --- the details of this construction is given in
Subsection~\ref{subsec:cons}. Determining the fundamental group of
$X_n$ and its characteristic numbers, we deduce that these
four-manifolds are homeomorphic to $\cpk \# 5\cpkk$, while the
construction equips them with a natural involution. The determination
of the Seiberg-Witten functions $SW_{X_n}$ then shows that these
manifolds are smoothly distinct, leading to the proof of
Theorem~\ref{thm:b2=2}.  The topological argument is given in
Subsection~\ref{ssec:topological}, while the Seiberg-Witten
calculation is detailed in Subsection~\ref{ssec:computeSW}.

\subsection{The construction of the four-manifolds}
\label{subsec:cons}
We start with a construction of definite manifolds with $\pi
_1=\Z/2\Z$ and second Betti number equal to 2 (so Euler characteristic
4). The idea is similar to the one applied in \cite{definite}: we
construct exotic smooth structures on $\cpk \# 5\cpkk$ which admit
orientation preserving free involutions. One building block in this
construction will be a family of smooth oriented closed 4-manifolds
that are constructed from $T^4$ by doing torus surgeries along two
disjoint Lagrangian tori $T_1$ and $T_2$. Such constructions are
studied in \cite{AP, BK}; in the following
exposition we will use several results from \cite{BK}.

Let $T^4$ be given as the quotient of the four-dimensional unit cube
$[0,1]^4$. We use coordinates $x= (z_1,z_2,z_3,z_4)$ to describe
points $x \in T^4$.  Fix four real numbers $c_1,c_2,c_4', c_4 $ in
$[1/2,3/4]$ with $c_4'<c_4 $.  The torus $T_1$ is given by the
equations $z_2=c_2$, $z_4 = c_4$, while the torus $T_2$ is given by
$z_1= c_1$, $z_4= c_4'$.  Given the product symplectic form on $T^4 =
T^2\times T^2$, both $T_1$ and $T_2$ are Lagrangian.

The maps from $[0,1]$ to $T^4$  given by
$x=(t,0,0,0)$, $y = (0,t,0,0)$, $a=(0,0,t,0)$, $b = (0,0,0,t)$, with
$t\in [0,1]$ are loops based at $x_0= (0,0,0,0)$. 

We also fix some further notations:
Given a diagonal point
$(t,t) \in T^2$ and a radius $0<r<1/2$ we denote by
$D_{t}(r)$ the closed disk of radius $r$ around $(t,t)$. 
Choose an $\epsilon> 0$ so that for ${\bf D}= D_{2\epsilon}(4\epsilon)$ the
submanifold
\begin{equation}\label{eq:m0}
  M_0 = ({\bf D}\times T^2)\cup (T^2\times {\bf D})
\end{equation}
is disjoint from both $T_1$ and $T_2$.  Let $H=K=T^2-D_{2\epsilon}(1.9
\epsilon)$. Note that $H\times K\subset T^4$ contains the fixed tori
$T_1$ and $T_2$, and also the four based loops, $x,y,a,b$.

  Choose small closed tubular neighborhoods of $T_1$ and $T_2$ in
  $T^4$ that are disjoint from $M_0$ of Equation~\eqref{eq:m0}. The
  boundaries of the neighbourhoods are three-dimensional tori $T^3_1$
  and $T^3_2$. It has been shown in \cite[Theorem~2]{BK} that there are arcs
  inside $H\times K$ from $x_0$ to $T^3_1$ and $T^3_2$ so that the
  meridian $\mu _1$ of $T_1$ and the Lagrangian push-offs of the generating
  loops $m_1,\ell_1$ of $H_1(T_1; \Z )$ are given by
  \[
  \mu_1=[b^{-1},y^{-1}], \qquad m_1=x,  \qquad \ell_1= a,
  \]
while the meridian $\mu _2$ of $T_2$ and the Lagrangian push-offs of
the generating loops $m_2,\ell_2$ of $H_1( T_2; \Z )$ are given by
\[
\mu_2=[x^{-1},b], \qquad m_2= y,\qquad \ell_2= bab^{-1}.
\]
Let us denote the homology classes of the meridians and the  push-offs in
$H_1(T^3_i; \Z )$ by $[\mu_i]$, $[m_i]$ and $[\ell_i]$.

Given integers $p_1,q_1$, $p_2,q_2$, where $(p_1,q_1)=1$ and
$(p_2,q_2)=1$, we perform torus surgeries along $T_1$ and $T_2$ with
surgery slopes $p_1[\mu_1]+q_1[m_1]$ and $p_2 [\mu_2]+q_2[\ell_2]$
respectively.  Let $V_0(p_1/q_1,p_2/q_2)$ denote the effect of this
surgery in $H\times K$ and let $V(p_1/q_1,p_2/q_2)$ denote the effect
of this surgery on $T^4$; so $V_0(p_1/q_1,p_2/q_2)\subset
V(p_1/q_1, p_2/q_2)$.  Note that $(p,q)=(1,-1)$ correspond to $-1$
Luttinger surgery, so $V(-1,-1)$ is a symplectic manifold.  Note also
that $M_0\subset V(p_1/q_1,p_2/q_2)$.

Now we construct a symplectic genus-2 surface in $M_0 \subset
V(p_1/q_1,p_2/q_2)$ taking the transversally intersecting symplectic
tori $T^2\times (\epsilon, \epsilon)$ and $(\epsilon,\epsilon)\times
T^2$ and smoothing the intersection point in the $\epsilon/3$
neighborhood of $(\epsilon,\epsilon, \epsilon, \epsilon)$. We further
blow up this surface at two points
$k_1=(1/2,1/2, \epsilon, \epsilon)$ and $k_2=(\epsilon, \epsilon,1/2,1/2)$.
This gives a symplectic
genus-2 surface $G_2$ with self-intersection $0$ in the
closed four-manifold
\begin{equation}\label{eq:defofw2}
W_2(p_1/q_1,p_2/q_2)= V(p_1/q_1,p_2/q_2)\# 2\cpkk .
\end{equation}
(The index 2 is supposed to indicate that we applied two blow-ups.)

Now choose a small enough closed tubular neighborhood $N_2$ of $G_2$
and fix a trivialization on $N_2$ as $G_2\times D^2$. In the following
we will fix some loops on $\partial N_2$.  First, let $M_{x,y}$ denote
$T^2\times {\bf D}$ blown up at $k_1$ and let $M_{a,b}$ denote ${\bf
  D}\times T^2$ blown up at $k_2$.  Then choose a basepoint $x_1$ on
$G=G_2\times \{ p\}\subset G_2\times D^2$ where $ p\in \partial
D^2$. Finally choose five embedded loops $s_1, t_1,s_2, t_2,\mu$ on
$G_2\times \{ p\}$ based at $x_1$ (see Figure~\ref{fig:maps}) with the
following properties:

\begin{itemize}
\item $[s_1,t_1] = \mu$
\item $[s_2,t_2]= \mu^{-1}$
\item $\mu$ is inside ${\bf D}\times {\bf D}$
\item $s_1,t_1$ are inside $M_{x,y}$ and $[s_1]=[x]$, $[t_1]=y$ in
  $H_1(M_{x,y}; \Z)$.
\item $s_2,t_2$ are inside $M_{a,b}$ and $[s_2]=[a]$, $[t_2]=b$ in
  $H_1(M_{a,b}; \Z)$.
\end{itemize}

Note that $G_2\times \{ p\}$ is embedded in $\partial (W_2(p_1/q_1,
p_2/q_2)-int(N_2))$.  Now we define a fixed point free, orientation
reversing diffeomorphism $\psi$ on $G$ with $\psi\circ\psi ={\rm
  {id}}_G$.  We take $\psi$ to be the composition two maps:  The first
map is the orientation reversing involution $r$ which swaps the two
torus components, with fixed points given by $\mu$ and mapping the curve
$s_1$ to $t_2$ and $t_1$ to $s_2$.  The second is the hyperelliptic
involution $h$ (180$^\circ$ rotation), as shown by
Figure~\ref{fig:maps}. (Here we fix a particular identification of $G$
with the model genus-2 surface, described by Figure~\ref{fig:maps}.)

\begin{figure}[htb]
\begin{center}
\setlength{\unitlength}{1mm}
 \includegraphics[height=9cm]{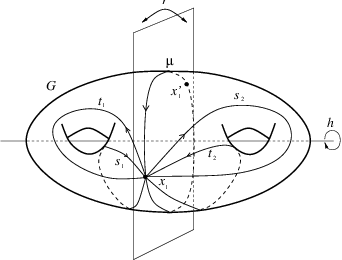}
\end{center}
\caption{\quad The map $r$ is reflection to the plane depicted (hence
  reverses the orientation of the genus-2 surface $G$), while
  $h$ is 180$^\circ$ rotation around the axis shown.}
\label{fig:maps}
\end{figure}

We will define $P(p_1/q_1, p_2/q_2,p_3/q_3,p_4/q_4)$ by taking a copy
of $X=W_2(p_1/q_1, p_2/q_2)-int(N_2)$ and a copy of
$X'=W_2(p_3/q_3,p_4/q_4)-int(N_2)$ and glue the two boundaries by a
certain fixed point free, orientation reversing map
\[
\Psi : \partial X\longrightarrow \partial X'.
\]
To define $\Psi$ we use the identification of $\partial X$ and
$\partial X'$ with $G\times S^1$ described earlier, where the curves
$s_1,t_1,s_2,t_2$ lie on $G_2\times \{p\}$ as above. Furthermore we
fix the orientation reversing fixed point free involution $\psi:
G\longrightarrow G$ defined earlier. Then $\Psi$ is given by
$$\Psi(g,\theta) = (\psi(g), \theta)$$
for $g \in G$, $\theta \in S^1$.
Now let
\[
P=P(p_1/q_1, p_2/q_2,p_3/q_3,p_4/q_4) = X\cup _\Psi X'.
\]

It follows from the construction that if $p_1/q_1=p_3/q_3$ and
$p_2/q_2=p_4/q_4$ then $X$ and $X'$ are copies of the same 4-manifold,
and the map $(x,x')\rightarrow (x',x)$ extend to a fixed point free,
orientation preserving diffeomorphism $\tau(p_1/q_1, p_2/q_2)$ with
$\tau(p_1/q_1,p_2/q_2)\circ\tau(p_1/q_1,p_2/q_2) ={\rm Id}_P$.

\begin{defn}\label{def:DefOfXn}
  Given a positive integer $n\in \N$, let $X_n$ denote the
  four-manifold $P(-1,-n,-1,-n)$ constructed above.
\end{defn}
\begin{thm}\label{thm:involution1}
  The four-manifold $X_n$ admits a smooth, fixed point free,
  orientation preserving involution $\iota _n$.
\end{thm}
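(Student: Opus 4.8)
The plan is to exhibit $\iota_n$ directly as the diffeomorphism $\tau(-1,-n)$ produced in the paragraph preceding Definition~\ref{def:DefOfXn}, and then to verify that this map is well-defined, smooth, an involution, and fixed point free. By Definition~\ref{def:DefOfXn} we have $X_n = P(-1,-n,-1,-n) = X \cup_\Psi X'$ where $X = W_2(-1,-n) - \mathrm{int}(N_2)$ and $X'$ is a second copy of exactly the same four-manifold $W_2(-1,-n) - \mathrm{int}(N_2)$. So the first step is simply to name the tautological identification $\phi\colon X \to X'$ of these two identical copies and to set $\iota_n(x) = \phi(x)$ for $x\in X$ and $\iota_n(x') = \phi^{-1}(x')$ for $x'\in X'$, i.e. $\iota_n$ interchanges the two pieces.

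The heart of the argument is checking that $\iota_n$ is well-defined across the gluing locus $\partial X$, which sits inside $P$ identified via $\Psi$ with $\partial X'$. A point of the gluing three-manifold can be written either as $(g,\theta)\in G\times S^1$ viewed in $\partial X$, or as $\Psi(g,\theta) = (\psi(g),\theta)$ viewed in $\partial X'$. Applying $\iota_n$ to the $\partial X$-description sends it to $(g,\theta)\in \partial X'$; applying $\iota_n$ to the $\partial X'$-description $(\psi(g),\theta)$ sends it to $(\psi(g),\theta)\in\partial X$. For consistency we need $(g,\theta)\in\partial X'$ and $(\psi(g),\theta)\in\partial X$ to be the same point of $P$, i.e. $(\psi(g),\theta)$ and $\Psi^{-1}$ applied to $(g,\theta)$ must agree; since $\Psi^{-1}(g,\theta) = (\psi^{-1}(g),\theta) = (\psi(g),\theta)$ using $\psi\circ\psi = \mathrm{id}_G$, this matches exactly. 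Thus $\iota_n$ descends to a well-defined map on $P$, and it is smooth because it is smooth on each of the two closed pieces and the two definitions agree on the overlap (one may note that a collar of $\partial X$ is swapped with a collar of $\partial X'$ compatibly). The involution property $\iota_n\circ\iota_n = \mathrm{Id}_P$ is immediate from $\phi^{-1}\circ\phi = \mathrm{id}$ together with $\psi\circ\psi=\mathrm{id}_G$ on the gluing locus.

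It remains to check that $\iota_n$ is fixed point free and orientation preserving. Away from $\partial X$ this is clear: $\iota_n$ carries the interior of $X$ to the interior of $X'$ and vice versa, so no interior point is fixed. On the gluing three-manifold $G\times S^1$, a point $(g,\theta)$ is fixed precisely when $g = \psi(g)$, and since $\psi$ was arranged (as the composition of the swap $r$ and the hyperelliptic involution $h$) to be a \emph{fixed point free} involution of $G$, there are no such $g$; hence $\iota_n$ has no fixed points on $P$. For orientations, $\iota_n$ is built from the identification $\phi$ of $X$ with $X'$, which carries the given orientation of $X$ to the given orientation of $X'$, composed with the orientation-reversing gluing map $\Psi$; concretely, since $X'$ enters $P=X\cup_\Psi X'$ with the orientation making $\Psi$ orientation reversing, the swap $(x,x')\mapsto(x',x)$ is orientation preserving on $P$, exactly as asserted for $\tau(-1,-n)$ in the text. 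The main obstacle is purely bookkeeping: one must be careful that the two collar neighborhoods of $\partial X$ and $\partial X'$ are interchanged by $\iota_n$ in an orientation-compatible way so that smoothness and orientation-preservation hold across the seam, but this follows from the symmetric form $\Psi(g,\theta)=(\psi(g),\theta)$ of the gluing together with $\psi^2 = \mathrm{id}_G$. Setting $\iota_n = \tau(-1,-n)$ then completes the proof.
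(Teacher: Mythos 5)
Your proposal is correct and follows the same route as the paper: the paper's proof simply declares $\iota_n=\tau(-1,-n)$, the swap of the two identical copies $X$ and $X'$ in $P(-1,-n,-1,-n)=X\cup_\Psi X'$, and you have merely spelled out the routine verifications (well-definedness on the seam via $\psi\circ\psi=\mathrm{id}_G$, smoothness via compatible collars, fixed point freeness from $\psi$ being fixed point free, and orientation preservation) that the paper leaves implicit.
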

\begin{proof}
  The involution $\tau(p_1/q_1, p_2/q_2)$ with the given parameters $(-1,-n)$
  has the required properties.
  \end{proof}

\subsection{Fundamental group calculations}
\label{ssec:topological}
In this subsection we determine the topology of the manifolds
$X_n$. The computation of the fundamental group turns out to
be the most challenging:
\begin{thm}\label{thm:FundGrofXn}
  The four-manifold $X_n$ is simply connected.
\end{thm}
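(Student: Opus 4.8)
The plan is to compute $\pi_1(X_n)$ via the Seifert--van Kampen theorem applied to the decomposition $X_n = X \cup_\Psi X'$, where $X = W_2(-1,-n) - \mathrm{int}(N_2)$ and $X'$ is a second copy, glued along their common boundary $\partial X \cong \partial X' \cong G \times S^1$. First I would record the contribution of each piece. By the construction, $W_2(p_1/q_1,p_2/q_2)$ is obtained from $T^4$ by two torus surgeries along $T_1, T_2$ and two blow-ups; blow-ups do not affect $\pi_1$, and since $\pi_1(T^4)=\Z^4$ is abelian, each torus surgery along $T_i$ with slope $p_i[\mu_i]+q_i(\cdots)$ has the effect of killing the corresponding word $p_i[\mu_i]+q_i(\cdots)$ in $\pi_1$ (the surgery torus $T_i$ being $\pi_1$-trivial after abelianization considerations, so van Kampen gives the relation that the attaching curve bounds). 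So $\pi_1(W_2(-1,-n))$ is the quotient of $\pi_1(T^4 - (T_1\cup T_2))$ by the surgery relations, and removing $N_2$ (a neighborhood of the genus-2 surface $G_2$) adds a meridian generator $\mu$ of $G_2$; using the explicit loops $x,y,a,b$ and the words $\mu_1 = [b^{-1},y^{-1}]$, $\mu_2 = [x^{-1},b]$ from \cite{BK}, together with the surgery slopes $(-1,-1)$ on $T_1$ (giving $[\mu_1] = m_1 = x$, i.e. $[b^{-1},y^{-1}] = x$) and $(-1,-n)$ on $T_2$ (giving $[\mu_2] = (\ell_2)^n$, i.e. $[x^{-1},b] = (bab^{-1})^n$), I can write a finite presentation of $\pi_1(X)$ on generators $x,y,a,b,\mu$.

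Next I would glue. Van Kampen for $X_n = X\cup_\Psi X'$ says $\pi_1(X_n)$ is the pushout of $\pi_1(X) \leftarrow \pi_1(G\times S^1) \rightarrow \pi_1(X')$, i.e. we amalgamate $\pi_1(X)$ and $\pi_1(X')$ over the image of $\pi_1(G\times S^1)$, where on the $X'$ side the inclusion is twisted by $\psi$. The key input is the explicit description of the five loops $s_1,t_1,s_2,t_2,\mu$ on $G_2\times\{p\}$ with $[s_1]=x$, $[t_1]=y$, $[s_2]=a$, $[t_2]=b$, $[s_1,t_1]=\mu$, $[s_2,t_2]=\mu^{-1}$, and the $S^1$ factor being $\mu$ itself (the meridian of $G_2$). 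The involution $\psi = h\circ r$ swaps the two handles: $r$ sends $s_1\mapsto t_2$, $t_1\mapsto s_2$ (and fixes $\mu$, reversing orientation of $G$), and $h$ is the hyperelliptic involution. So the gluing identifies, in $\pi_1(X_n)$, the generators $x,y,a,b,\mu$ coming from $X$ with the corresponding $\psi$-images of the generators $x',y',a',b',\mu'$ coming from $X'$. Working out $\psi_*$ on $\pi_1(G)$ carefully — this is where Figure~\ref{fig:maps} does the work — yields relations such as $x' = t_2$-type images, i.e. identifications of the primed generators with words in the unprimed ones (and vice versa), so that the amalgamated group collapses onto a quotient of a single copy. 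Combining the surgery relations from both sides with these gluing relations, I expect the generators $a,b,\mu$ (and their primed versions) to be expressible in terms of $x$ (or killed outright), and the commutator relations $[b^{-1},y^{-1}]=x$, $[x^{-1},b]=(bab^{-1})^n$ to force enough collapsing that only $\Z/2\Z$ survives — consistent with $X_n$ being homeomorphic to $\cpk\#5\cpkk$ and carrying the free involution $\iota_n$.

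The main obstacle is the bookkeeping of $\psi_* = h_* \circ r_*$ on $\pi_1(G, x_1)$: the reflection $r$ is only orientation-reversing and its action on based loops depends on a chosen path to the fixed set, and the hyperelliptic involution $h$ acts nontrivially on $\pi_1$ of the genus-2 surface in a way that must be pinned down relative to the model in Figure~\ref{fig:maps}. I would handle this by choosing a symplectic-type generating set $s_1,t_1,s_2,t_2$ adapted to the picture, tracking base-point paths explicitly, and reducing modulo the normal subgroup generated by $\mu$ when convenient (since $\mu$ is central-ish as the $S^1$-meridian direction). Once $\psi_*$ is known on these five loops, the remaining computation is a finite sequence of Tietze transformations on the combined presentation; the surgery relation $[b^{-1},y^{-1}]=x$ in particular lets one eliminate $x$, and then the gluing relations together with $[x^{-1},b]=(bab^{-1})^n$ should successively kill $a$, $b$, $y$, leaving the abelianized quotient $\Z/2\Z$ — but in fact I expect to get the trivial group on the nose, with the $\Z/2\Z$ only appearing via the deck transformation of a double cover elsewhere; I would double-check against the known $\pi_1 = \Z/2\Z$ of the target homeomorphism type to make sure no relation has been dropped.
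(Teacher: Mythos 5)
Your overall strategy is the same as the paper's: apply Seifert--van Kampen to $X_n=X\cup_\Psi X'$ with $X=W_2(-1,-n)-\mathrm{int}(N_2)$, feed in the presentation coming from the Baldridge--Kirk description of the meridians and Lagrangian push-offs, and then add the identifications coming from $\psi$. But the proposal stops exactly where the actual proof begins. The whole content of the argument is the explicit computation of $\psi_*$ on the based loops $s_1,t_1,s_2,t_2,\mu$ (in the paper: $s_1\mapsto t_2^{-1}\mu$, $t_1\mapsto\mu^{-1}s_2^{-1}$, $s_2\mapsto\mu^{-1}t_1^{-1}$, $t_2\mapsto s_1^{-1}\mu$, after a careful base-point change along a semi-circle of $\mu$), followed by the chain of deductions: $\mu=[s_1,t_1]$ is already trivial in each piece (because $[x,y]$ dies in $V(-1,-n)$), then $[s_1,s_2]=1$ forces $[T_2,T_1]=1$, hence $S_1=1$, hence $t_2=1$, then $[T_1^{-1},S_1^{-1}]^n=S_2$ kills $S_2$ and $t_1$, then $[t_2^{-1},t_1^{-1}]=s_1$ kills $s_1$ and $T_2$, and finally $[t_2^{-1},s_1^{-1}]^n=s_2$ kills $s_2$ and $T_1$. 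You defer all of this (``this is where Figure~\ref{fig:maps} does the work'', ``I expect the generators \dots to be killed outright''), so no proof of triviality is actually given.

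There are also three concrete inaccuracies that would derail the bookkeeping if carried out as written. First, you expect the collapsing to leave $\Z/2\Z$ and call this ``consistent with $X_n$ being homeomorphic to $\cpk\#5\cpkk$''; but $\cpk\#5\cpkk$ is simply connected, and the theorem asserts $\pi_1(X_n)=1$ --- the $\Z/2\Z$ belongs only to the quotient $X_n/\iota_n$, so aiming at $\Z/2\Z$ is aiming at the wrong group. Second, you identify $\mu$ with the meridian of $G_2$ (the $S^1$ factor of $\partial N_2$); in fact $\mu$ is the separating curve on $G_2\times\{p\}$ with $[s_1,t_1]=\mu$, $[s_2,t_2]=\mu^{-1}$, while the meridian is a different loop, and the relevant fact about the meridian is that it bounds a disk coming from the exceptional spheres, which is what gives $\pi_1(X)\cong\pi_1(W_2(-1,-n))$. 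The triviality of $\mu$ comes from $[s_1,t_1]=1$, not from it being the $S^1$ direction. Third, the surgery relation on $T_2$ with coefficient $-n$ is $[x^{-1},b]^n=bab^{-1}$ (slope $\mu_2^{\pm n}\ell_2^{\mp1}$), not $[x^{-1},b]=(bab^{-1})^n$ as you wrote; with the correct relation $S_1=1$ immediately yields $S_2=1$, whereas your version would only give $S_2^n=1$ at the corresponding step, so the collapse you need does not follow from the relations as you recorded them.
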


It is proved in \cite{BK} that $\pi_1(H\times K-(T_1\cup T_2), x_0)$
is generated by $x,y,a,b$ and these generators satisfy the relations
$[x,a]=1$, $[y,a]=1$. The following result on $\pi_1(V_0(-1,-n), x_0)$
follows immediately from the Seifert-Van Kampen theorem and the
description on pushoffs in Subsection~\ref{subsec:cons}.

\begin{lem}\label{path_V}
  Let $n$ be a positive integer.
  The fundamental group  $\pi_1(V_0(-1,-n), x_0)$ is generated $x,y,a,b$, and
  these generators satisfy the relations:
  \begin{itemize}
  \item $[x,a]=1$
  \item $[y,a]=1$
  \item $[b^{-1},y^{-1}]=x$
  \item $[x^{-1},b]^n = bab^{-1}$ (or equivalently $[b^{-1},x^{-1}]^n= a$).
  \end{itemize}
  Furthermore the inclusion map $\pi_1(V_0(-1,-n),x_0) \rightarrow
  \pi_1(V(-1,-n),x_0)$
  is surjective and the images of $[x,y]$ and $[a,b]$ are trivial. \qed  
\end{lem}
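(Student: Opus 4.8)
The plan is to realize $V_0(-1,-n)$ as the complement $U=(H\times K)-\mathrm{int}\,\nu(T_1\cup T_2)$ with two copies of $T^2\times D^2$ glued back in, and to run the Seifert--Van Kampen theorem once for each surgery, feeding in the presentation of $\pi_1(U)$ quoted from \cite{BK}. Since removing open tubular neighbourhoods does not change the homotopy type of the complement, $\pi_1(U)=\pi_1\big((H\times K)-(T_1\cup T_2)\big)$, which by \cite{BK} is $\langle x,y,a,b\mid [x,a]=1,\ [y,a]=1\rangle$; moreover the meridians and push-offs are already the words $\mu_1=[b^{-1},y^{-1}]$, $m_1=x$, $\ell_1=a$, $\mu_2=[x^{-1},b]$, $m_2=y$, $\ell_2=bab^{-1}$ in these generators, exactly as recorded in Subsection~\ref{subsec:cons}.

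First I would treat a single surgery on $T_i$. Gluing $T^2\times D^2$ to $U$ along $T^3_i=\partial\nu T_i$ so that $\partial D^2$ is attached to the slope $p_i[\mu_i]+q_i[c_i]$, the theorem gives $\pi_1=\pi_1(U)*_{\pi_1(T^3_i)}\pi_1(T^2\times D^2)$, where $\pi_1(T^2\times D^2)=\Z^2$. The inclusion induced map $\pi_1(T^3_i)=\Z^3\to\Z^2$ is onto with kernel generated by the slope class, so the amalgamation introduces no new generator (every element of the glued piece is the image of a peripheral curve, hence a word in $x,y,a,b$) and contributes the single new relation setting the slope word equal to $1$. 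The coefficient $-1$ on $T_1$ means $(p_1,q_1)=(1,-1)$ with the curve $c_1=m_1$ (consistent with the construction's remark that $(1,-1)$ is the $-1$ Luttinger surgery), so the new relation is $\mu_1 m_1^{-1}=1$, i.e.\ $[b^{-1},y^{-1}]=x$. The coefficient $-n$ on $T_2$ means $(p_2,q_2)=(n,-1)$ with $c_2=\ell_2$, so the new relation is $\mu_2^{\,n}\ell_2^{-1}=1$, i.e.\ $[x^{-1},b]^n=bab^{-1}$; conjugating this relation by $b$ gives the equivalent form $[b^{-1},x^{-1}]^n=a$. As the two surgeries are supported on disjoint neighbourhoods, performing them simultaneously simply adjoins both relations to $\pi_1(U)$, while $[x,a]=1$ and $[y,a]=1$ descend unchanged. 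This yields generation by $x,y,a,b$ together with the four displayed relations.

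For the ``furthermore'' part I would compare $V_0(-1,-n)$ with $V(-1,-n)$ using that $T_1,T_2\subset H\times K$, so passing from $H\times K$ to $T^4$ only glues back the untouched region $R=T^4-\mathrm{int}(H\times K)$, which is the union of a disk times the full second $T^2$ factor and the full first $T^2$ factor times a disk; thus $V(-1,-n)=V_0(-1,-n)\cup_{\partial(H\times K)}R$. Surjectivity of $\pi_1(V_0)\to\pi_1(V)$ follows because $\pi_1(V)$ is generated by $x,y,a,b$ (the meridians being commutators in them, so surgery on $T^4$ only adds relations), and these four loops already lie in $H\times K\subset V_0$. Finally, since $[x,y]$ (respectively $[a,b]$) is the boundary of the disk removed from the first (respectively second) $T^2$ factor of $H\times K$, the region $R$ restores each full factor and hence bounds these commutators; therefore $[x,y]=1$ and $[a,b]=1$ in $\pi_1(V(-1,-n))$.

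The routine but error-prone heart of the argument is the dictionary between a surgery slope and the relation it imposes: one must pin down the sign conventions in $(p_i,q_i)$, verify that the coefficients $-1$ and $-n$ select the curves $m_1$ and $\ell_2$ in the respective slopes, and confirm that the glued $T^2\times D^2$ contributes no spurious generator. Given the explicit peripheral words from \cite{BK}, each of these is a short check, and the only genuine subtlety is ensuring that the amalgamation over $T^3_i$ collapses to exactly the single relation claimed rather than concealing extra identifications of the peripheral elements.
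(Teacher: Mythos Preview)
Your proposal is correct and matches the paper's approach exactly: the paper states that the lemma ``follows immediately from the Seifert--Van Kampen theorem and the description on pushoffs'' and gives no further details, while you have spelled out precisely that Seifert--Van Kampen computation, feeding in the \cite{BK} presentation of $\pi_1\big((H\times K)-(T_1\cup T_2)\big)$ and the explicit meridian and push-off words. One cosmetic point: to pass from $[x^{-1},b]^n=bab^{-1}$ to $[b^{-1},x^{-1}]^n=a$ you conjugate by $b^{-1}$ (i.e.\ apply $g\mapsto b^{-1}gb$), not by $b$; the computation you intend is $b^{-1}[x^{-1},b]b=[b^{-1},x^{-1}]$.
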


Recall the definition of $M_0$ from Equation~\eqref{eq:m0} and
consider $M_2=M_0\#2\cpkk$.
Fix an arc $\gamma$ in $M_2-int(N_2)$ from $x_1$ to $x_0$ with the
property that $\gamma$ lies inside ${\bf D}\times {\bf D}$.

\begin{lem}\label{path_M}
  We have  following path homotopies in $M_2- int(N_2)$:
\begin{itemize}
  \item $s_1$ is path homotopic to $\gamma\ast x\ast {\overline \gamma}$,
  \item $t_1$ is path homotopic to $\gamma\ast y\ast {\overline \gamma}$,
  \item $s_2$ is path homotopic to $\gamma\ast a\ast {\overline \gamma}$, and
  \item $t_2$ is path homotopic to $\gamma\ast b\ast {\overline \gamma}$.
\end{itemize}
\end{lem}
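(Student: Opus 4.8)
The plan is to track the five based loops $s_1,t_1,s_2,t_2,\mu$ that live on $G_2\times\{p\}\subset\partial N_2$ back through the neighbourhood $M_2-\mathrm{int}(N_2)$ to the basepoint $x_0$, using the explicit product structure of $M_2$ and the defining properties of these loops. Recall $M_0=(\mathbf D\times T^2)\cup(T^2\times\mathbf D)$, so $M_2=M_0\#2\cpkk$ is obtained by blowing up at $k_1=(1/2,1/2,\epsilon,\epsilon)\in T^2\times\mathbf D$ and $k_2=(\epsilon,\epsilon,1/2,1/2)\in\mathbf D\times T^2$; in the notation of the construction $M_2-\mathrm{int}(N_2)$ is covered by $M_{x,y}$ (namely $T^2\times\mathbf D$ blown up at $k_1$, minus $\mathrm{int}(N_2)$) and $M_{a,b}$ ($\mathbf D\times T^2$ blown up at $k_2$, minus $\mathrm{int}(N_2)$), glued along a piece sitting inside $\mathbf D\times\mathbf D$.

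First I would fix the arc $\gamma$ inside $\mathbf D\times\mathbf D$ connecting $x_1\in G_2\times\{p\}$ to $x_0=(0,0,0,0)$; this is possible because $\mathbf D\times\mathbf D$ is connected, contains $x_0$ by the choice of $\epsilon$ (the disks $D_{2\epsilon}(4\epsilon)$ contain the diagonal point near the origin), and meets $G_2\times\{p\}$. Then I would argue loop-by-loop. For $s_1$: by construction $s_1$ lies in $M_{x,y}$ and $[s_1]=[x]$ in $H_1(M_{x,y};\Z)$. The space $M_{x,y}$ is $T^2_{(z_1,z_2)}\times\mathbf D$ blown up at one point and with $\mathrm{int}(N_2)$ removed; its fundamental group is generated by the two $T^2$-factor loops $x,y$ together with the exceptional-sphere boundary and the meridian of $G_2$. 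However, $s_1$ was chosen as one of a quadruple $s_1,t_1,s_2,t_2$ with $[s_1,t_1]=\mu$, $[s_2,t_2]=\mu^{-1}$, $\mu$ nullhomotopic off $N_2$ (it bounds in $\mathbf D\times\mathbf D$ away from the blow-up and from $G_2$), and these were drawn on the genus-2 surface so that conjugating by $\gamma$ sends the standard symplectic generators precisely to $x,y,a,b$. Concretely, $\pi_1$ of $M_{x,y}$ minus $N_2$ is the free product (or an HNN/amalgam) of $\pi_1(T^2)=\langle x,y\rangle$ with the free group on the exceptional and meridian classes, and the path homotopy class of $s_1$, being a curve on $G_2\times\{p\}$ that is null-homotopic in the complement of everything except it runs once around the $x$-handle, must equal $\gamma\ast x\ast\overline\gamma$ up to homotopy. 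I would make this precise by noting that the inclusion $G_2\times\{p\}\hookrightarrow \partial(M_2-\mathrm{int}(N_2))\hookrightarrow M_2-\mathrm{int}(N_2)$ can be homotoped so that $s_1$ is pushed off $N_2$ into the $x$-loop of the $T^2$ factor of $M_{x,y}$, away from the exceptional sphere (both blow-up points $k_1,k_2$ are at $(1/2,1/2,\cdot,\cdot)$-type coordinates, disjoint from the $\epsilon$-scale picture where $s_1$ lives). The same verbatim argument, with $M_{x,y}$ replaced by $M_{a,b}$ and $x,y$ by $a,b$, handles $t_1\simeq\gamma\ast y\ast\overline\gamma$, $s_2\simeq\gamma\ast a\ast\overline\gamma$, $t_2\simeq\gamma\ast b\ast\overline\gamma$.

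The main obstacle I expect is the bookkeeping of basepoints and the verification that one really can homotope each $s_i,t_i$ off the exceptional spheres and off $N_2$ while keeping the basepoint (or rather, while only changing it along $\gamma$) — i.e.\ upgrading the homological statements $[s_1]=[x]$ etc.\ (which are what was literally recorded in the construction) to genuine \emph{path-homotopy} statements in the non-abelian $\pi_1$. This is legitimate because the loops $s_1,t_1,s_2,t_2,\mu$ were \emph{defined} as explicit embedded curves on $G_2\times\{p\}$ with those homology classes and with the commutator relations $[s_1,t_1]=\mu$, $[s_2,t_2]=\mu^{-1}$; together with $\mu$ being null-homotopic in $M_2-\mathrm{int}(N_2)$ (it bounds a disk in $\mathbf D\times\mathbf D$), this pins down each class in $\pi_1$, since in the relevant free/amalgamated-product decomposition of $\pi_1(M_2-\mathrm{int}(N_2))$ a curve with prescribed abelianization lying in the appropriate block and commuting appropriately has no room to differ from the asserted conjugate. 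One then simply reads off that under the chosen trivialization and the chosen drawing of Figure~\ref{fig:maps}, pushing $s_1$ across $N_2$ and following it through $M_{x,y}$ traces out $\gamma\ast x\ast\overline\gamma$, and likewise for the other three; the argument for each is identical after permuting the roles of the coordinate tori and the two blow-up points.
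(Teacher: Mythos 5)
Your proposal assembles the right raw ingredients (the loops lie in $M_{x,y}$ resp.\ $M_{a,b}$, the arc $\gamma$ sits in $\mathbf D\times \mathbf D$, the blow-ups are relevant), but the step that actually proves the lemma --- upgrading the homological statements $[s_1]=[x]$, etc.\ to \emph{path} homotopies in $M_2-\mathrm{int}(N_2)$ --- is not justified. You assert that $\pi_1$ of $M_{x,y}$ minus $N_2$ is ``a free product (or an HNN/amalgam) of $\pi_1(T^2)$ with the free group on the exceptional and meridian classes'' and that an element is then ``pinned down'' by its abelianization together with some commutation relations. Neither claim holds as stated: blowing up does not add free generators to $\pi_1$ (the exceptional sphere is simply connected), the actual group of the complement is never computed, and in a non-abelian group an element is certainly not determined by its image in homology plus commutator relations. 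Note also that the $\mu$ you invoke is the separating curve on $G_2$, not the meridian of $G_2$; it is the meridian whose fate controls the passage from $M_2-\mathrm{int}(N_2)$ to $M_2$, and your argument never addresses it.

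The paper closes exactly this gap in two short moves. First, each exceptional sphere meets $G_2$ transversally in one point, so puncturing it yields a disk in $M_2-\mathrm{int}(N_2)$ whose boundary is a meridional circle of $G_2$; by Seifert--Van Kampen this forces the inclusion-induced map $\pi_1(M_2-\mathrm{int}(N_2),x_1)\to\pi_1(M_2,x_1)$ to be an isomorphism, so it suffices to produce the path homotopies in $M_2$ itself. Second, inside $M_2$ both loops of each asserted pair lie in $M_{x,y}$ (resp.\ $M_{a,b}$), a space with \emph{abelian} fundamental group, where homologous based loops are automatically path homotopic. Your attempt to work directly in the complement, with an incorrect model of its fundamental group and no substitute for the meridian-killing disk, is the genuine gap; if you insert the isomorphism $\pi_1(M_2-\mathrm{int}(N_2))\cong\pi_1(M_2)$ and then use the abelianness of $\pi_1(M_{x,y})$ and $\pi_1(M_{a,b})$, the homological information you already recorded does finish the proof.
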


\begin{proof}
First note that the exceptional spheres in $M_2$ intersect $G_2$
transversally once.  It follows that there is a disk in $M_2-int(N_2)$
whose boundary is a meridional circle to $G_2$. It then follows from
the Seifert-Van Kampen theorem that
  $$i_\ast :\pi_1(M_2-int(N_2), x_1)\longrightarrow \pi_1(M_2,x_1)$$
  is an isomorphism. It follows that it is sufficient to prove the existence of path homotopies in $M_2$.

  For the first two bullet-points we can work in $M_{x,y}\subset M_2$,
  that is, $T^2\times {\bf D}$ blown up at $k_1$.  Both loops lie in
  $M_{x,y}$ and are based at $x_1$. Since $M_{x,y}$ has
  Abelian fundamental group, and the loops are homologous,
  it follows that they are also path homotopic.
  A similar argument inside $M_{a,b}\subset M_2$ (given as ${\bf
    D}\times T^2$ blown up at $k_2$) finishes the argument for the
  last two bullet-points.
  \end{proof}

Combining Lemma~\ref{path_V} and Lemma~\ref{path_M} we have the following:

\begin{lem}\label{path_W}
The fundamental group $\pi_1(W_2(-1,-n)-int(N_2), x_1)$ is generated by
$s_1,t_1,s_2,t_2$, and these generators satisfy the relations:
  \begin{itemize}
  \item $[s_1,s_2]=1$
  \item $[t_1,s_2]=1$
  \item $[t_2^{-1},t_1^{-1}]=s_1$
  \item $[t_2^{-1},s_1^{-1}]^n= s_2$
  \item $[s_1,t_1]=1$
  \item $[s_2,t_2]=1$.
  \end{itemize}
\end{lem}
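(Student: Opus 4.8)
The plan is to reduce the computation to the already-understood group $\pi_1(V(-1,-n))$, by showing that neither the two blow-ups nor the removal of the tubular neighbourhood $N_2$ changes the fundamental group, and then to read off the generators and relations from Lemma~\ref{path_V}, after translating the loops $x,y,a,b$ into $s_1,t_1,s_2,t_2$ by means of Lemma~\ref{path_M}.

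The key step is to check that the inclusion $W_2(-1,-n)-int(N_2)\hookrightarrow W_2(-1,-n)$ induces an isomorphism on $\pi_1$. As in the proof of Lemma~\ref{path_M}, each of the two exceptional spheres meets $G_2$ transversally in a single point, so a meridian $\mu$ of $G_2$ bounds a disk in $M_2-int(N_2)\subset W_2(-1,-n)-int(N_2)$ and is therefore null-homotopic in the complement. Applying the Seifert--Van Kampen theorem to $W_2(-1,-n)=(W_2(-1,-n)-int(N_2))\cup N_2$ along $\partial N_2=G_2\times S^1$, and using $N_2=G_2\times D^2$, the only relation introduced by gluing $N_2$ back is the vanishing of the meridian; since $\mu$ is already trivial in the complement, the inclusion is a $\pi_1$-isomorphism. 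As blowing up is connected sum with the simply connected $\cpkk$, performed away from the loops $x,y,a,b$, we also have $\pi_1(W_2(-1,-n))\cong\pi_1(V(-1,-n))$ carrying $x,y,a,b$ to $x,y,a,b$. Composing, $\pi_1(W_2(-1,-n)-int(N_2),x_0)\cong\pi_1(V(-1,-n),x_0)$, carrying $x,y,a,b$ to $x,y,a,b$.

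Finally I would translate and transport. Let $\Phi_\gamma\colon\pi_1(\,\cdot\,,x_0)\to\pi_1(\,\cdot\,,x_1)$, $[\ell]\mapsto[\gamma\ast\ell\ast\overline{\gamma}]$, be the change of basepoint along $\gamma$; by Lemma~\ref{path_M} it carries $x,y,a,b$ to $s_1,t_1,s_2,t_2$. Since Lemma~\ref{path_V} shows that $x,y,a,b$ generate $\pi_1(V_0(-1,-n))$, which surjects onto $\pi_1(V(-1,-n))$, the classes $x,y,a,b$ generate $\pi_1(V(-1,-n))$, whence $s_1,t_1,s_2,t_2$ generate $\pi_1(W_2(-1,-n)-int(N_2),x_1)$. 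The four relations $[x,a]=1$, $[y,a]=1$, $[b^{-1},y^{-1}]=x$ and $[b^{-1},x^{-1}]^n=a$ of Lemma~\ref{path_V} hold in $\pi_1(V_0(-1,-n))$, hence in its quotient $\pi_1(V(-1,-n))$, while $[x,y]=1$ and $[a,b]=1$ hold in $\pi_1(V(-1,-n))$ because Lemma~\ref{path_V} records that the images of $[x,y]$ and $[a,b]$ are trivial there. Transporting these six relations through the isomorphism of the previous paragraph and then through $\Phi_\gamma$, with $x\mapsto s_1$, $y\mapsto t_1$, $a\mapsto s_2$, $b\mapsto t_2$, produces exactly the six relations claimed; for instance $[x,a]=1$ becomes $[s_1,s_2]=1$, and $[b^{-1},x^{-1}]^n=a$ becomes $[t_2^{-1},s_1^{-1}]^n=s_2$. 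The main obstacle is the first step, namely that $N_2$ may be deleted without affecting $\pi_1$; once the meridian of $G_2$ is seen to bound in the complement, the rest is bookkeeping with Lemmas~\ref{path_V} and~\ref{path_M}.
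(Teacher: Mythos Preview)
Your proof is correct and follows essentially the same route as the paper: use the exceptional disks to show that the meridian of $G_2$ is null-homotopic in the complement so that the inclusion $W_2(-1,-n)-\mathrm{int}(N_2)\hookrightarrow W_2(-1,-n)$ is a $\pi_1$-isomorphism, note that blowing up does not affect $\pi_1$, and then transport the generators and relations of Lemma~\ref{path_V} via the change-of-basepoint isomorphism provided by Lemma~\ref{path_M}. Your write-up in fact spells out the Seifert--Van Kampen step slightly more explicitly than the paper does.
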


\begin{proof}
  According to Lemma~\ref{path_M} it suffices to prove the
  corresponding results for $x,y,a,b$ in
  $\pi_1(W_2(-1,-n)-int(N_2),x_0)$.  Using the exceptional disks as
  before, we see that the map induced by the inclusion $i: W_2(-1,-n)-
  int(N_2)\rightarrow W_2(-1,n)$ is an isomorphism on $\pi_1$.  Since
  the blow-up operation does not change the fundamental group, the
  application of Lemma~\ref{path_V} on $\pi_1(V(-1,-n), x_0)$
  concludes the proof.
  \end{proof}

The map $\psi$ induces a map $\psi _*\colon \pi _1(G, p)\to \pi _1(G,p')$,
where the two base points $p$ and $p'$ are opposite points of
the circle $\mu$ we get by intersecting $G$ with the plane to which $r$
reflects, cf. Figure~\ref{fig:maps}. (Note that $\mu$ can be also viewed
as a loop based at $p$.)

Connecting $p'$ to $p$ by one of the semi-circles of $\mu$, we can identify
$\pi _1(G, p')$ with $\pi _1(G, p)$ in such a way that when composing
$\psi _*$ above with this identification, we get that
\begin{equation}\label{eq:action}
  s_1\mapsto t_2^{-1}\mu , \qquad t_1 \mapsto \mu ^{-1} s_2^{-1},
  \qquad s_2\mapsto \mu ^{-1}t_1^{-1},
  \qquad t_2\mapsto s_1^{-1}\mu.
  \end{equation}

With this preparation in place, we are ready to return to the
proof of Theorem~\ref{thm:FundGrofXn}.
    

\begin{proof}[Proof of Theorem~\ref{thm:FundGrofXn}]
  Fix a positive integer $n$, and let $X_n=P(-1,-n,-1,-n)$ (as in
  Definition~\ref{def:DefOfXn}).  In the copy $X= W_2(-1,-n)-int(N_2)$
  the generators of the fundamental group will be denoted by $s_1,
  t_1, s_2, t_2$ while in the copy $X'$ the same generators will be
  denoted by $S_1, T_1, S_2,T_2$.

  Relations among these generators are given above in
  Lemma~\ref{path_W} (the same relations should be repeated
  for the generators $S_1, T_1, S_2, T_2$). By the Seifert-Van Kampen
  theorem the above eight elements generate $\pi _1(X_n, p)$, which
  are subject to the above relations. In addition, the gluing map
  $\psi$
  provides further relations among these generators. Indeed, the
  action derived from the map $\psi$ on $\pi _1(G, p)$ described above
  implies that we need to identify

\begin{itemize}
  \item $s_1$ with $T_2^{-1}\mu$,
  \item $t_1$ with $\mu^{-1} S_2^{-1}$,
  \item $s_2$ with $\mu^{-1} T_1^{-1}$, and
  \item $t_2$ with $S_1^{-1}\mu$.
\end{itemize}
    Observe that $\mu $ is equal to the commutator $[s_1,t_1]$, which
    is trivial in the fundamental group $\pi _1(X_n,p)$, hence we can
    ignore it in the above list of identifications. The relation
    $[s_1,s_2]=1$ therefore implies that $[T_2,T_1]$ is trivial,
    implying that $S_1$ is also trivial, which immediately implies
    that $t_2$ is trivial. In addition, the relation $[T_1^{-1},
      S_1^{-1}]^n=S_2$, together with $S_1=1$ implies $S_2=1$ (and so
    $t_1=1$). But since $t_1=t_2=1$, the relation $[t_2^{-1},
      t_1^{-1}]s_1^{-1}=1$ implies $s_1=1$ (which also shows
    $T_2=1$). Finally, the triviality of $s_1$ and
    $[t_2^{-1},s_1^{-1}]^n=s_2$ implies the triviality of $s_2$, which also
    means that $T_1=1$, ultimately verifying the theorem.
\end{proof}

In order to complete the topological characterisation of $X_n$, we
need to determine its Euler characteristic $\chi (X_n)$ and signature
$\sigma (X_n)$.
\begin{prop}
  We have $\chi (X_n)=8$ and $\sigma (X_n)=-4$, consequently
  $b_2^+(X_n)=1$, $b_2^-(X_n)=5$.
  \end{prop}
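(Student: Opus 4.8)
The plan is to compute $\chi$ and $\sigma$ additively by tracking how the building blocks are assembled, using the fact that torus surgery and the gluing along $T^3$-boundaries are controlled operations.

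First I would record the characteristic numbers of the pieces. Torus surgery on $T^4$ along a torus with trivial normal bundle does not change the Euler characteristic or signature, since we remove $T^2\times D^2$ and glue back a copy of $T^2\times D^2$; hence $\chi(V(p_1/q_1,p_2/q_2))=\chi(T^4)=0$ and $\sigma(V(p_1/q_1,p_2/q_2))=\sigma(T^4)=0$. Blowing up twice gives $\chi(W_2(p_1/q_1,p_2/q_2))=0+2\cdot 1=2$ and $\sigma(W_2(p_1/q_1,p_2/q_2))=0+2\cdot(-1)=-2$. Next, removing the open tubular neighborhood $\mathrm{int}(N_2)$ of the genus-$2$ surface $G_2$ (with trivial normal bundle, so $N_2\cong G_2\times D^2$) deletes a piece of Euler characteristic $\chi(G_2\times D^2)=\chi(G_2)=-2$; since $N_2$ meets $W_2$ in its boundary $G_2\times S^1$, which has Euler characteristic $0$, inclusion–exclusion gives $\chi(X)=\chi(W_2(-1,-n)-\mathrm{int}(N_2))=2-(-2)=4$. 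The signature of $X$ equals that of $W_2$, namely $-2$: indeed $N_2$ has vanishing signature (its intersection form on $H_2$ rel boundary is trivial, the normal bundle being trivial), so by Novikov additivity $\sigma(W_2)=\sigma(X)+\sigma(N_2)=\sigma(X)$, giving $\sigma(X)=-2$. The same holds for the second copy $X'=W_2(-1,-n)-\mathrm{int}(N_2)$.

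Finally I would assemble $X_n=X\cup_\Psi X'$ along the $T^3$-bundle boundary $\partial X\cong G\times S^1\cong \partial X'$. Since $\chi(G\times S^1)=0$, inclusion–exclusion gives $\chi(X_n)=\chi(X)+\chi(X')-0=4+4=8$. For the signature, Novikov additivity along the separating $3$-manifold $\partial X$ gives $\sigma(X_n)=\sigma(X)+\sigma(X')=-2+(-2)=-4$. From $\chi(X_n)=8$ and $\sigma(X_n)=-4$ together with $b_0=b_4=1$, $b_1=b_3=0$ (which follows from $X_n$ being simply connected, Theorem~\ref{thm:FundGrofXn}, and Poincaré duality), we get $b_2(X_n)=\chi(X_n)-2=6$ and $b_2^+(X_n)-b_2^-(X_n)=\sigma(X_n)=-4$, hence $b_2^+(X_n)=1$ and $b_2^-(X_n)=5$.

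The only point requiring care — and the main obstacle — is the signature computation, since Novikov additivity must be applied correctly: it is additive under gluing along boundary components, and one must be sure the orientations of the pieces are compatible with the orientation of $X_n$, so that no sign is introduced when gluing $X$ to $X'$ via the orientation-reversing map $\Psi$ (this is exactly why $\Psi$ is orientation-reversing on the boundary three-manifolds, making the orientations of $X$ and $X'$ extend consistently over $X_n$). Granting that, the computation is routine. An alternative, perhaps cleaner, route to the signature is to note that $X_n$ is homeomorphic to a connected sum of $\cpk$ with copies of $\cpkk$ by the construction summary, but since that homeomorphism is established only after knowing the characteristic numbers, I would carry out the direct Novikov-additivity argument above.
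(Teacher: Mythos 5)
Your computation is correct, and it reaches the same numbers by a slightly different route than the paper. The paper reorders the construction: it observes that $X_n$ can be obtained by first forming the fiber sum $U$ of two copies of $T^4\#2\cpkk$ along the genus-2 surface (legitimate because the surgery tori $T_1,T_2$ lie in $V_0$, away from the gluing region $N_2$), quotes additivity of $\chi$ and $\sigma$ under fiber sum to get $\chi(U)=8$, $\sigma(U)=-4$, and then notes that the four torus surgeries change neither invariant. You instead follow the construction in its given order and unpack the fiber-sum additivity by hand: torus surgery preserves $\chi$ and $\sigma$ (remove and reglue $T^2\times D^2$, Novikov additivity along $T^3$), blow-ups contribute $(+1,-1)$ each, removing $N_2\cong G_2\times D^2$ is handled by inclusion--exclusion and by $\sigma(N_2)=0$, and the final gluing along $G\times S^1$ is handled by inclusion--exclusion for $\chi$ and Novikov additivity for $\sigma$, with the correct remark that $\Psi$ being orientation-reversing on the boundary is exactly what lets both pieces keep their orientations, so $\sigma(X_n)=-2+(-2)=-4$ with no sign flip. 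Your route is more self-contained (it re-derives the fiber-sum additivity it needs rather than citing it), while the paper's is shorter because it packages the same Novikov/inclusion--exclusion facts into the known behaviour of fiber sums and torus surgeries. You also make explicit the final step the paper leaves implicit: deducing $b_2^+=1$, $b_2^-=5$ from $\chi$, $\sigma$ uses $b_1(X_n)=0$, which is supplied by the simple connectivity theorem proved just before the proposition.
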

\begin{proof}
  We can construct $X_n$ by first making the fiber sum operation between
  two copies of  $T^4\#2\cpkk$ to construct a four-manifold $U$ and then doing the four torus surgeries.
  Since the fiber sum is along a genus-2 surface we have
  $\chi (U)=2\chi(T^4\#2\cpkk)+4=8$.
  Since the signature is additive under the fiber sum operation
  we have $\sigma (U)=-4$. As torus surgery does not
  change $\chi$ and $\sigma$ we have
  $\chi(X_n)=8$ and $\sigma(X_n)=-4$. The values of
  $b_2^{\pm}(X_n)$ follow at once.
\end{proof}
The application of Freedman's theorem~\cite{Fr} then implies
\begin{cor}\label{cor:homeob2=2}
  For $n \in \N$ the four-manifold $X_n$ is homeomorphic to
  $\cpk\# 5\cpkk$. \qed
  \end{cor}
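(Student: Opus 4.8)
The plan is to deduce the homeomorphism type of $X_n$ from Freedman's classification of closed, simply connected, oriented topological four-manifolds \cite{Fr}: such a manifold is determined up to homeomorphism by its intersection form together with its Kirby--Siebenmann invariant.

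First I would assemble the invariants already computed. By Theorem~\ref{thm:FundGrofXn} the manifold $X_n$ is simply connected, and by the computation preceding this corollary $\chi(X_n)=8$ and $\sigma(X_n)=-4$, so $b_2(X_n)=6$ with $b_2^+(X_n)=1$ and $b_2^-(X_n)=5$. Since $X_n$ is closed, oriented, and $H_1(X_n;\Z)=0$ (whence $H_2(X_n;\Z)$ is torsion-free), Poincar\'e duality shows that the intersection form $Q_{X_n}$ is a unimodular symmetric bilinear form of rank $6$ and signature $-4$, and it is indefinite because both $b_2^\pm(X_n)$ are positive.

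Next I would identify $Q_{X_n}$ precisely. An even unimodular form has signature divisible by $8$; since $\sigma(X_n)=-4$ is not, $Q_{X_n}$ must be odd. By the classification of indefinite unimodular symmetric bilinear forms, an odd indefinite form of rank $6$ and signature $-4$ is isomorphic to $\langle 1\rangle\oplus 5\langle -1\rangle$, which is exactly the intersection form of $\cpk\#5\cpkk$. Finally, because $X_n$ admits a smooth structure its Kirby--Siebenmann invariant vanishes, as does that of $\cpk\#5\cpkk$; hence the two manifolds have isomorphic intersection forms and equal Kirby--Siebenmann invariants, and Freedman's theorem produces a homeomorphism $X_n\cong\cpk\#5\cpkk$.

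There is no genuine obstacle in this argument; the only point that needs a moment's care is the oddness of $Q_{X_n}$, which I would settle by the mod-$8$ constraint on signatures of even forms rather than by exhibiting odd classes directly (for instance the two exceptional spheres coming from the blow-ups used in the construction).
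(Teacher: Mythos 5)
Your proposal is correct and follows the same route as the paper, which simply invokes Freedman's theorem together with the already established facts that $X_n$ is simply connected with $\chi(X_n)=8$, $\sigma(X_n)=-4$, $b_2^+=1$, $b_2^-=5$. The details you supply (oddness of the form via the mod-$8$ signature constraint, the classification of indefinite unimodular forms, and vanishing of the Kirby--Siebenmann invariant for smooth manifolds) are exactly the routine steps the paper leaves implicit.
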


\subsection{Computation of Seiberg-Witten invariants for $X_n$.}
\label{ssec:computeSW}
We devote this subsection to the computation of $SW_{X_n}$,
  providing the proof of the following

\begin{thm}\label{thm:X-SW}  
  The four-manifold $X_n$ admits two Seiberg-Witten
  basic classes $\pm L$, and $SW_{X_n}(\pm L) = \pm n^2$.
\end{thm}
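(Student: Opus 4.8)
The plan is to compute $SW_{X_n}$ by combining the symplectic base case $n=1$ (where the whole manifold is built from Luttinger surgeries) with the torus-surgery formula \eqref{eq:formula}, tracking how the Seiberg--Witten invariant changes as the parameter $n$ varies. First I would identify which of the four torus surgeries in the construction of $P(-1,-n,-1,-n)$ actually depend on $n$: these are the two surgeries on $T_2$ and its mirror copy $T_2'$ inside the two halves $X$ and $X'$, carried out with slopes $p_2[\mu_2]+q_2[\ell_2]$ where $(p_2,q_2)=(-1,-n)$. The other two surgeries (on $T_1$ and $T_1'$, with fixed slope $(-1,-1)$) stay put, as does the fiber-sum gluing $\Psi$. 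So I would fix the complement $M$ of tubular neighborhoods of $T_2$ and $T_2'$ in $X_1 = P(-1,-1,-1,-1)$-minus-those-neighborhoods, view $X_n$ as the result of regluing two copies of $D^2\times T^2$ along the two boundary $3$-tori, and set up a two-variable version of \eqref{eq:formula} indexed by the slopes on the two tori.

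Next I would establish the base case: $X_1 = P(-1,-1,-1,-1)$ is obtained from $T^4\#2\cpkk$ by a sequence of $-1$ Luttinger surgeries followed by a symplectic fiber sum along the genus-$2$ surface $G_2$ (the gluing map $\Psi$ is orientation-reversing, but the standard Gompf-type argument shows the symplectic sum $X \cup_\Psi X'$ is symplectic since $\psi$ reverses orientation on $G$ and hence the two $\omega|_{G_2}$ match up). Since $b_2^+(X_1)=1$ and the fiber sum raises $b_2^+$ in each summand appropriately, $X_1$ is a symplectic $4$-manifold with $b_2^+>1$ (indeed $b_2^+(T^4\#2\cpkk)=3$, and the fiber sum along a genus-$2$ surface gives $b_2^+ \geq 3$; here $X_1$ is homeomorphic to $\cpk\#5\cpkk$ so $b_2^+(X_1)=1$ — this discrepancy means I must be careful: in fact $X_1$ is \emph{not} symplectic, or the homeomorphism type forces re-examination). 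Let me instead run the computation directly through \eqref{eq:formula}: writing $F(p_2/q_2, p_2'/q_2')$ for the total SW invariant of spin$^c$ structures restricting to the fixed class on $M$, the formula applied twice gives
\[
F\!\left(\tfrac{-1}{n},\tfrac{-1}{n}\right) = (-1)F\!\left(1,\tfrac{-1}{n}\right) + nF\!\left(0,\tfrac{-1}{n}\right),
\]
and each of those reduces again in the second variable; the cross-term produces the factor $n\cdot n = n^2$, while the pure $p$-surgery terms ($F(1,0)$-type) vanish because they correspond to manifolds with $b_2^+=1$ and trivial or $S^1\times S^3$-summand behavior.

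Then I would pin down the relevant single-surgery values. The key input is that the $(-1,-1)$-Luttinger version $V(-1,-1)$ is symplectic, hence by \eqref{eq:symp} (after the blow-ups and fiber sum) the manifold with all four slopes equal to $-1$ has a nonzero SW invariant $\pm 1$ on its canonical class; and the surgeries with slope $(1,0)$ or $(0,1)$ produce either manifolds containing an essential $S^2\times S^2$-summand or an $S^1\times S^3$-summand (because the corresponding curve bounds or the surgered torus becomes a "null" torus), forcing those $F$-values to vanish. Feeding these into the iterated formula isolates exactly the $n^2$ coefficient, and the adjunction inequality together with the explicit homology of $X_n$ identifies the support as precisely $\{\pm L\}$ for a single class $L$ with $L^2 = 2\chi + 3\sigma = 4$, matching a symplectic canonical class.

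The main obstacle I anticipate is bookkeeping the spin$^c$ structures correctly across the two simultaneous torus surgeries and the fiber sum: I must verify that the set $S(p,q)$ of spin$^c$ structures restricting to the fixed $M$ is a single orbit (so that $F$ really equals the individual $SW$ up to sign, giving $SW_{X_n}(\pm L) = \pm n^2$ rather than merely $|SW| \le n^2$), and that the fiber-sum formula of \cite{MMSz} interacts correctly with \eqref{eq:formula} — this is where the orientation-reversing nature of $\Psi$ could introduce sign subtleties. I would handle this by first doing the fiber sum (obtaining a fixed symplectic-type piece away from $T_2, T_2'$) and only then varying the two surgery coefficients, so that \eqref{eq:formula} applies verbatim with $M$ the fiber-summed complement, reducing everything to the two base computations $F(1,0)=0$ and the Luttinger value.
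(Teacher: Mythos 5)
Your overall template --- a symplectic non-vanishing input fed through the torus-surgery formula \eqref{eq:formula}, with a vanishing statement for the auxiliary fillings and an adjunction argument to pin down the basic classes --- is the same as the paper's, but the two essential inputs are not established in your proposal, and the base case is set up at the wrong manifold. You ultimately want to extract the value $\pm 1$ from the all-$(-1)$ (Luttinger) manifold $X_1$ via \eqref{eq:symp}, but, as you yourself half-noticed, each of the four surgeries kills a hyperbolic pair, so $b_2^+(X_1)=1$ and \eqref{eq:symp} as stated does not apply there (a chamber discussion would be needed); you flag the tension and never resolve it. The paper instead bases the induction at the un-surgered double fiber sum $U=P(1/0,1/0,1/0,1/0)$, which is symplectic by Gompf's sum with $-\omega$ on the second copy (Lemma~\ref{lem:Usymplectic}) and has $b_2^+=5>1$, and then proves by a careful adjunction/characteristic-class analysis (Lemma~\ref{lem:U}, using the tori $d_i,D_i$, the genus-2 pair $x,y$, the classes $y-q_i$ and genus-3 representatives of $x+q_i$) that $\pm c_1(U,\omega_U)$ are the \emph{only} basic classes. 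That same analysis, rerun on the surgered manifolds, is also what guarantees that the support of $SW_{X_n}$ is exactly $\{\pm L\}$ and that each set $S(p,q)$ in \eqref{eq:formula} contributes a single spin$^c$ class (the paper notes $L_0$ restricts trivially to the surgery tori, so it extends uniquely); you correctly identify this as a needed verification but do not supply it.

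The more serious gap is the vanishing input. In the iteration of \eqref{eq:formula} the nontrivial fact is that the fillings corresponding to $p_i=0$ (killing the Lagrangian push-off) have identically vanishing Seiberg--Witten invariants. You assert this on the grounds that such manifolds contain an $S^2\times S^2$- or $S^1\times S^3$-summand, which is neither true as stated nor argued; the paper's Lemma~\ref{lem:vanishing} proves it by exhibiting explicit embedded spheres (Lemma~\ref{lem:spheres}) which, after the sum, give a square-zero genus-1 class meeting the genus-2 surface once, and then combining the adjunction inequality with the dimension bound $L^2\ge 2\chi+3\sigma=4$ to reach a contradiction. Without this lemma the surgery formula yields only linear relations, not values. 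Finally, your slope bookkeeping does not produce $n^2$: the $n$-dependent surgery has $(p_2,q_2)=(n,-1)$ (it imposes $\mu_2^{\,n}=\ell_2$, cf.\ Lemma~\ref{path_V}), not $(-1,-n)$, and in \eqref{eq:formula} the factor $n$ multiplies the \emph{meridian} filling $F(1,0)$, i.e.\ the no-surgery (symplectic-derived) manifold with value $\pm1$, while $F(0,1)$ is the term that must vanish. Your account has $F(1,0)=0$ and, in places, both auxiliary fillings vanishing; carried through consistently, that would give either an $n$-independent answer or $0$, not $\pm n^2$.
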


We work with the more general family
$P(p_1/q_1,p_2/q_2,p_3/q_3,p_4/q_4)$.
In fact $q_i=0$ is a special case, and it is equivalent to not doing
surgery along the corresponding torus, as
in this case the boundary of the gluing disk in $D^2\times T^2$ is the
same as $\pm 1$ times the meridian.

Using this observation we will start with $U= P(1/0,1/0,1/0,1/0)$
which can be viewed as by doing a fiber sum of two copies of $T^4\# 2\cpkk$ 
along $G_2$.  We
compute the Seiberg-Witten invariants of $U$ first and then use
surgery formulas along $T^3$ to compute the invariants for
$P(-1,-n,-1,-n)$.

\begin{lem}\label{lem:Usymplectic}
  The four-manifold $U$ admits a symplectic structure $\omega _U$. 
\end{lem}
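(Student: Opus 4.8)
The plan is to exhibit $U = P(1/0,1/0,1/0,1/0)$ as the result of a symplectic fiber sum (Gompf sum) of two copies of $T^4 \# 2\cpkk$ along the genus-2 surface $G_2$, and to invoke Gompf's theorem that the symplectic fiber sum of symplectic four-manifolds along symplectomorphic symplectic submanifolds of equal square, glued by an orientation-reversing diffeomorphism of the normal bundles, carries a symplectic structure. First I would recall from Subsection~\ref{subsec:cons} that the blow-up $T^4 \# 2\cpkk$ is symplectic (the product symplectic form on $T^4$ together with the standard symplectic blow-ups at the points $k_1,k_2$), and that $G_2 \subset T^4\#2\cpkk$ is a symplectic genus-2 surface of self-intersection $0$: it was built by symplectically smoothing the transverse intersection of the two symplectic tori $T^2\times(\epsilon,\epsilon)$ and $(\epsilon,\epsilon)\times T^2$ and then blowing up at two points on it, each blow-up dropping the square by $1$ from $+2$ down to $0$.

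Next I would verify the hypotheses of the Gompf sum precisely. Both copies of $W_2(1/0,1/0) = T^4\#2\cpkk$ (recall $q_i=0$ means no surgery is performed) contain the identical symplectic surface $G_2$ with square $0$, and we delete open tubular neighbourhoods $N_2 \cong G_2\times D^2$ from each, as in the definition of $X = W_2(1/0,1/0)-\mathrm{int}(N_2)$ and $X' = W_2(1/0,1/0)-\mathrm{int}(N_2)$. The gluing map $\Psi\colon \partial X \to \partial X'$ is $\Psi(g,\theta)=(\psi(g),\theta)$ where $\psi\colon G\to G$ is the fixed-point-free orientation-reversing involution defined in Subsection~\ref{subsec:cons} as the composition $h\circ r$ of the hyperelliptic involution and the factor-swapping reflection. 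I would check that $\Psi$ is a bundle map covering $\psi$ that reverses the orientation of the $D^2$ (normal) directions --- equivalently, that it is of the form required for the Gompf sum: since $\psi$ is orientation-reversing on $G$ and $\Psi$ is orientation-reversing on the three-manifold $\partial X = G\times S^1$ (as it must be, to glue two manifolds-with-boundary into a closed oriented manifold), the induced identification on the normal circle bundle is fiber-orientation-reversing, which is exactly the compatibility condition for the fiber sum to be symplectic. Gompf's construction then produces a symplectic form $\omega_U$ on $U = X\cup_\Psi X'$ agreeing with the given forms away from a neighbourhood of the gluing region.

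The main obstacle I expect is the bookkeeping around orientations and the precise matching of symplectic framings: one must be sure that the Lagrangian/symplectic framing $G_2\times D^2$ chosen on $N_2$ is the one for which $\psi$ extends to an orientation-compatible bundle isomorphism, and that the orientation-reversing nature of $\psi$ (needed so that $X$ and $X'$, both oriented as subsets of $T^4\#2\cpkk$, glue to an oriented $U$) is consistent with Gompf's sign conventions rather than in conflict with them. In Gompf's setup the gluing is required to reverse the orientation of the normal bundle fibers, and reflecting through the normal direction is precisely what makes the sum orientable and symplectic; since $\psi$ here is orientation-reversing on the base $G$, the total gluing map on $\partial N_2 = G\times S^1$ is orientation-preserving or reversing according to its action on the $S^1$ factor, and I would track this carefully so that the two pieces assemble correctly and the symplectic forms match up to isotopy on the overlap. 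Once this compatibility is confirmed, Gompf's theorem applies verbatim and yields $\omega_U$, completing the proof.
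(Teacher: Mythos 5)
Your overall strategy is the same as the paper's: realize $U$ as a Gompf symplectic normal sum of two copies of $T^4\#2\cpkk$ along the square-zero symplectic genus-2 surface $G_2$. However, there is a genuine gap at exactly the point you flag and then defer. With the \emph{same} symplectic form $\omega$ on both copies, the hypotheses of Gompf's theorem are not satisfied: the two copies of $G_2$ are identified by $\psi=h\circ r$, which is orientation-reversing on $G$, whereas the symplectic sum requires the surfaces to be identified by a map preserving their symplectic orientations (the orientation reversal must sit in the normal disk/circle fibers, not in the base). Moreover, your claimed verification runs backwards: since $\Psi(g,\theta)=(\psi(g),\theta)$ is the identity on the $S^1$ factor, the orientation reversal of $\Psi$ on $G\times S^1$ comes entirely from the base, so relative to the product coordinates the fiber direction is \emph{preserved}, not reversed. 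Hence "Gompf's theorem applies verbatim" is not justified as written; the compatibility you promise to "track carefully" is precisely the content of the lemma.

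The paper resolves this with a one-line device that is missing from your proposal: equip the second copy of $T^4\#2\cpkk$ with $-\omega$ instead of $\omega$. Since $(-\omega)\wedge(-\omega)=\omega\wedge\omega$, this does not change the ambient orientation, but it makes $G_2$ symplectic with the opposite orientation in the second copy. Then the orientation-reversing $\psi$ becomes orientation-preserving with respect to the symplectic orientations of the two surfaces, and correspondingly the induced normal-fiber orientations on the two sides are opposite, so the map $\theta\mapsto\theta$ is fiber-orientation-reversing --- exactly the configuration Gompf's normal connected sum requires. With this adjustment your argument closes up and coincides with the paper's proof.
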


\begin{proof}
  On the first copy of $T^4\# 2\cpkk$ use a symplectic form $\omega$
  for which $G_2$ is symplectic.  On the second copy of $T^4 \#
  2\cpkk$ use $-\omega$ so that $G_2$ is symplectic with the opposite
  orientation.  Applying the symplectic normal connected sum operation
  of Gompf~\cite{Gompf}, it follows that $U$ admits a symplectic
  structure.
\end{proof}

As $b_2^+(U)>1$, Equation~\eqref{eq:symp} implies that the Seiberg-Witten
function $SW_U$ is non-trivial, in
particular $SW_U (\pm c_1(U, \omega _U))=\pm 1$. Indeed,

\begin{lem}\label{lem:U}
  The symplectic four-manifold $U$ has two basic classes, which are
  $\pm c_1 (U, \omega _U)$.
\end{lem}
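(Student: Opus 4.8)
The plan is to identify the basic classes of $U = P(1/0,1/0,1/0,1/0)$ with the help of the fiber‑sum description and the structure of the building blocks. Recall from Lemma~\ref{lem:Usymplectic} that $U$ is the symplectic normal connected sum of two copies of $T^4\#2\cpkk$ along the genus‑$2$ surface $G_2$, where the second copy carries the reversed symplectic form. Since $b_2^+(U)>1$, Equation~\eqref{eq:symp} already gives that $\pm c_1(U,\omega_U)$ are basic classes with $SW_U(\pm c_1(U,\omega_U))=\pm 1$; what remains is to show there are \emph{no others}.

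\begin{proof}[Proof of Lemma~\ref{lem:U}]
First I would record the basic classes of the building block $Y=T^4\#2\cpkk$. The four‑torus $T^4$ has a unique Seiberg–Witten basic class, namely $0\in H^2(T^4;\Z)$, with $SW_{T^4}(0)=1$ (for instance because $T^4$ is symplectic with torsion canonical class, or by a direct product computation). Under the blow‑up formula the basic classes of $Y=T^4\#2\cpkk$ are exactly $\pm E_1\pm E_2$, where $E_1,E_2$ are the exceptional classes, each with Seiberg–Witten value $\pm 1$; these are precisely $\pm c_1$ of the two natural symplectic/anti‑symplectic forms on $Y$ compatible with $\pm\omega$. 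Next I would invoke the product/gluing formula for Seiberg–Witten invariants under normal connected sum along a surface of genus $\ge 2$ (the relevant statement is the one used throughout this circle of constructions, e.g.\ as in Morgan–Mrowka–Szab\'o / Morgan–Szab\'o–Taubes type gluing, or equivalently the adjunction‑type constraints below combined with the symplectic computation). The key point is that a basic class $K$ of the fiber sum $U$ must restrict on each side to a class that is ``compatible'' with the gluing along $G_2$, and in particular it must satisfy the adjunction inequality for the surface $G_2\subset U$, which has genus $2$ and self‑intersection $0$: namely $|K([G_2])|\le 2g(G_2)-2=2$, and in fact $K([G_2])\in\{-2,0,2\}$ with the extreme values forced by the relative adjunction in each summand.

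The heart of the argument is then a counting/bookkeeping step: because each side $W_2(1/0,1/0)-int(N_2)$ has, on the compact side, relative Seiberg–Witten information controlled by the basic classes $\pm E_1\pm E_2$ of $T^4\#2\cpkk$ (the $T^4$ part contributing only the trivial class), and because the normal bundle of $G_2$ is trivial, the gluing formula expresses $SW_U(K)$ as a sum of products of relative invariants over ways of splitting $K$ compatibly across $G_2$. I would argue that the only splittings with nonzero contribution are the two that reassemble to $\pm c_1(U,\omega_U)$: the condition $K([G_2])=\pm 2$ (forced once $K$ is not already of ``fiber type'') pins down the restriction to each side to be $\pm(E_1+E_2)$ up to the ambiguity absorbed by $H^2$ of the gluing region, and the case $K([G_2])=0$ is excluded because on $T^4\#2\cpkk$ there is no basic class restricting trivially to the genus‑$2$ surface $G_2$ (the basic classes $\pm E_1\pm E_2$ all evaluate nontrivially on $[G_2]$, since $[G_2]=[T^2\times pt]+[pt\times T^2]-2E_1-2E_2$ gives $(\pm E_1\pm E_2)\cdot[G_2]=\pm 2$). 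Hence the support of $SW_U$ is contained in $\{\pm c_1(U,\omega_U)\}$, and combined with Equation~\eqref{eq:symp} it is exactly this set, with the stated values $\pm 1$.

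The step I expect to be the main obstacle is making the gluing formula bookkeeping fully rigorous: one must be careful that the relative Seiberg–Witten invariants of the pieces $W_2(1/0,1/0)-int(N_2)$ are genuinely concentrated on the restrictions of $\pm(E_1+E_2)$, i.e.\ that the $T^4$ factor contributes nothing beyond the trivial spin$^c$ structure after removing a neighborhood of $G_2$, and that the indeterminacy in $H^2$ coming from $H_1(T^3)$ of the gluing torus does not produce extra basic classes. I would handle this exactly as in \cite{BK} and the analogous computations: track spin$^c$ structures via their first Chern classes, use that $H_1(\partial N_2)$ maps into the ambient manifold in a way that kills the potentially troublesome directions, and then the adjunction inequality for $G_2$ together with the blow‑up formula for $T^4\#2\cpkk$ closes the case count. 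Once the enumeration of contributing spin$^c$ structures is pinned down, the conclusion that $U$ has precisely the two basic classes $\pm c_1(U,\omega_U)$ is immediate.
\end{proof}
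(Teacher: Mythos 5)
Your proposal has a genuine gap, and it is located exactly at the step you yourself flag as ``the main obstacle.'' First, a concrete error: the surface $G_2$ is obtained by smoothing one intersection point of the two tori and blowing up at two points \emph{on} the resulting surface, so its class is $[T^2\times pt]+[pt\times T^2]-E_1-E_2$, not $[T^2\times pt]+[pt\times T^2]-2E_1-2E_2$ (your class has square $-6$, not $0$). With the correct class, the basic classes $\pm(E_1-E_2)$ of $T^4\#2\cpkk$ satisfy $(E_1-E_2)\cdot[G_2]=0$; also note $T^4\#2\cpkk$ has four basic classes $\pm E_1\pm E_2$, not just the two classes $\pm(E_1+E_2)=\mp c_1$. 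So your stated reason for excluding the case $K\cdot[G_2]=0$ --- that no basic class of the pieces restricts trivially to $G_2$ --- is false, and the exclusion of non-extremal classes, which is the entire content of the lemma beyond Taubes, is not established. Second, the gluing formula you invoke is only available in the extremal situation $|K\cdot \Sigma|=2g(\Sigma)-2$ along $\Sigma\times S^1$; it does not express $SW_U$ on spin$^c$ structures with $K\cdot[G_2]=0$ as a sum over splittings into closed basic classes of the summands, and ``relative invariants concentrated on restrictions of $\pm(E_1+E_2)$'' is precisely what would need proof (with the usual rim-torus/indeterminacy issues along $G_2\times S^1$). As written, the argument assumes the conclusion at the decisive point.

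The paper avoids gluing theory entirely. It computes $H_2(U;\Z)\cong\Z^{14}$ by Mayer--Vietoris, exhibits eight square-zero tori $d_i,D_i$ forming four hyperbolic pairs, the hyperbolic pair $x=[G_2]$, $y=[F]$ of genus-$2$ surfaces, and tori $Q_i$ of square $-1$ with $x\cdot q_i=1$, so that the classes $y-q_i$ span a negative definite complement of the five hyperbolic pairs. The adjunction inequality forces any basic class $K$ to vanish on the $d_i,D_i$ and to satisfy $|K(x)|,|K(y)|\le 2$; the dimension constraint $K^2\ge 2\chi(U)+3\sigma(U)=4$ together with the characteristic condition then forces $K(x)=K(y)=\pm 2$ and $K(y-q_i)=\pm 1$, and adjunction applied to the genus-$3$ surfaces representing $x+q_i$ pins down the signs, leaving at most two candidates; Taubes supplies $\pm c_1(U,\omega_U)$. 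If you want to keep a fiber-sum flavored argument, you would still have to handle the $K\cdot[G_2]=0$ case by some intrinsic argument in $U$ of this adjunction/dimension type, since the pieces genuinely have basic classes ($\pm(E_1-E_2)$) that are invisible to $G_2$.
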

\begin{proof}
A Mayer-Vietoris calculation shows that $H_2(U; \Z)=\Z ^{14}$; a
basis of this homology group can be identified as follows. We have
four hyperbolic pairs of tori given by the four Lagrangian tori and
their dual tori in $T^4$; denote these by $(d_i, D_i)$ for $1\leq i \leq 4$
(see Figure~\ref{fig:surfaces}).

\begin{figure}[htb]
\begin{center}
\setlength{\unitlength}{1mm}
 \includegraphics[height=10cm]{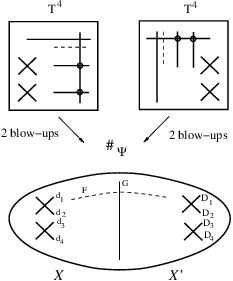}
\end{center}
\caption{\quad In the top row the two copies of $T^4$ with the
  intersecting section and fiber (and further tori) are shown.  In the
  bottom row the fiber connected sum of the two copies of
  $T^4\#2\cpkk$, together with the 0-tori $d_i, D_i$ ($i=1,\ldots ,
  4$), and the genus-2 surfaces $F$ and $G$ are depicted.}
  \label{fig:surfaces}
\end{figure}

Let $x$ denote the homology class of $G_2$ and choose a torus $F_1$ in
the first copy of $T^4\# 2\cpkk$ that represents the homology class
$[T^2\times pt]$ and intersects $G_2$ transversally at one
point. Choose a similar torus $F_2$ from the second copy as
well. Puncturing these surfaces and connecting them in the gluing
region by an annulus we get an embedded genus-2 surface $F$
representing a homology class $y$ with self-intersection 0 and $x\cdot
y = 1$.  This gives an additional hyperbolic pair in $H_2(U,\Z)$.

By gluing the exceptional disks in the first copy of $T^4\# 2\cpkk$ to
$F_2$ we get embedded tori $Q_1$, $Q_2$ with self-intersection $-1$.
Similarly by gluing $F_1$ to the exceptional disks in the second copy
of $T^4\# 2\cpkk$ we get embedded tori $Q_3$, $Q_4$ with
self-intersection $-1$. Let $q_i$ represent the corresponding homology
classes, where we orient the tori so that $x\cdot q_i=1$ for
$1\leq i\leq 4$. Then the homology classes $y-q_i$ span a four-dimensional
negative
definite subspace of $H_2(U; \Z )$ while the perpendicular subspace in
$H_2(U,\Z)$ is spanned by the above 5 hyperbolic pairs.

Using the adjunction inequality we see that any Seiberg-Witten basic class $K$
of $U$ 
evaluates as zero on all the 0-tori $d_i, D_i$.  By using the
adjunction inequality again, $K$ evaluates $\pm 2$ or 0 on $x =[G]$
and $y = [F$]. In order for $K$ to give rise to a moduli space with
non-negative formal dimension $\frac{1}{4}(K^2-3\sigma (U)-2\chi
(U))=\frac{1}{4}(K^2-4)$, we need that $K^2\geq 4$, hence
$K(x)= K(y) = \pm 2$.

Suppose that $K(x)=2$.  Since $K$ is characteristic,
we have that $K(y-q_i)$ is odd,
and $K^2 \geq 4$ implies that $K(y-q_i)=\pm 1 $ for all $1\leq i\leq
4$.  We will show that $K(y-q_i)=1$ for all $i$. To that end note that
$q_i$ is represented by a torus with self intersection $-1$ that
intersects a genus-2 surfaces
representing $x$ at a single transverse
point. Now $x+q_i$ is represented by a genus-3 surface $V$. Since
$[V]^2=(x+q_i)^2 = 1$ the adjunction inequality $K(V)+[V]^2 \leq
2g(V)-2=4$ shows that $K(V)=K(x+q_i)\leq 3$. Now $K(x)=2$ implies
$K(q_i) \leq 1$. Since $K(y)=2$ we have $K(y-q_i) \geq 1$.  This
together with $K(y-q_i)= \pm 1$ shows that $K(y-q_i)=1$. It follows
that in the case $K(x)=2$ the value of $K$ is determined on all
generators. The same argument works for $K(x)= -2$.

It follows that there are at most two basic classes on $U$.  On the
other hand, by Taubes' theorem \cite{Taub1} we know that
$\pm c_1(U, \omega _U)$ are basic
classes of $U$, completing the argument.
\end{proof}

We will also need the following:

\begin{lem}\label{lem:vanishing}
  Let $P = P(p_1/q_1,p_2/q_2, p_3/q_3, p_4/q_4)$.  If any of the $p_i$
  is equal to $0$ then all Seiberg-Witten invariants of $P$ vanishes.
\end{lem}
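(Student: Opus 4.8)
The plan is to reduce the statement to the transformation formula \eqref{eq:formula} and the fact that a fiber-sum manifold with trivial monodromy on one side has vanishing Seiberg-Witten invariants. First I would fix one index, say $i=1$, and consider the torus $T_1\subset X\subset P$ along which the first surgery is performed; let $M$ be the complement $P\smallsetminus \mathrm{int}\,\nu T_1$, with $\partial M = T^3$. The surgered manifold $P(p_1/q_1,\dots)$ is exactly $M(p_1,q_1)$ in the notation of the surgery-formula subsection, where $a=[\mu_1]$ plays the role of the meridian direction (the ``$p$'' coefficient) and $b=[c_f]$ is the surgery-curve direction (the ``$q$'' coefficient). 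Then \eqref{eq:formula} gives $F(p_1,q_1)=p_1\cdot F(1,0)+q_1\cdot F(0,1)$, where $F(p,q)$ is the total Seiberg-Witten invariant over all $\mathrm{spin}^c$ structures on $M(p,q)$ restricting to the fixed $\mathbf s$ on $M$. Since this decomposition of the total invariant over restriction classes exhausts \emph{all} $\mathrm{spin}^c$ structures of $P$ (as $\mathbf s$ ranges over admissible classes on $M$), it suffices to prove $F(0,1)=0$ for every choice of $\mathbf s$: then $p_1=0$ forces $F(0,q_1)=q_1 F(0,1)=0$, hence $SW_P\equiv 0$.

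Next I would identify the manifold $M(0,1)$: this is the result of the $0/1$ surgery on $T_1$, i.e. gluing back so that $[\partial D^2]\mapsto [c_f]$, which is precisely the surgery that kills the class of the push-off curve $m_1=x$ and does \emph{not} involve the meridian at all. Geometrically, $M(0,1)$ contains an embedded torus (a fiber of the surgery solid torus together with part of $T^3$) that is null-homologous, or more to the point, $M(0,1)$ decomposes as a fiber sum — or a twisted fiber sum — along a torus whose monodromy is trivial in the relevant sense, so one of the summands carries an essential $S^1\times S^2$-like factor, or one has a cusp neighborhood that can be surgered to introduce a sphere of self-intersection $0$. The cleanest route: the $0/1$-surgery on a torus $T$ with a chosen curve $C$ produces a manifold containing an essential embedded $2$-sphere of square $0$ (coming from the meridian disk of $T$ capped off after the surgery), forcing $b_2^+$ to jump or forcing the Seiberg-Witten invariant to vanish by the standard connected-sum/blow-up vanishing argument, or by the adjunction relation applied to this sphere together with any basic class $K$: a basic class must evaluate to $0$ on a $0$-sphere, but the sphere is homologically essential and the argument that its complement splits off an $S^2\times S^2$ or $S^2\tilde\times S^2$ summand kills $SW$. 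I would make this precise by exhibiting $M(0,1)$ as a manifold admitting a smoothly embedded $S^2\times S^2$ summand (equivalently, containing two transverse $0$-spheres), which immediately gives $SW_{M(0,1)}\equiv 0$ and hence $F(0,1)=0$.

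The main obstacle I anticipate is the second step: correctly matching the geometric surgery data $(T_1, m_1, \mu_1, p_i/q_i)$ with the algebraic $(p,q)$ convention of \eqref{eq:formula}, i.e. verifying that ``$p_i=0$'' really corresponds to the $F(0,1)$ slot and not to $F(1,0)$, and then rigorously producing the $0$-sphere (or $S^2\times S^2$ summand) in $M(0,1)$. Because the surgery slopes in the construction are $p_i[\mu_i]+q_i[\text{push-off}]$, and the formula is stated with $a=[\mu]$ as the first coordinate, the identification should be direct: $F(p,q)$ here is $F(p_i,q_i)$, so $p_i=0$ gives $F(0,q_i)=q_i\,F(0,1)$, and $F(0,1)$ is the total invariant of the $0$-surgery along the push-off curve. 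In that surgered manifold the original meridian $\mu_i$ of $T_i$ bounds a disk (the core of the old normal disk was removed, but the new solid torus $D^2\times T^2$ is glued so that $\partial D^2$ maps to the push-off, leaving $\mu_i$ to bound in the new piece), and this disk together with an annulus in $T^3$ and a meridian disk in the old neighborhood closes up to an embedded $2$-sphere of square $0$; a parallel copy gives a second transverse $0$-sphere, exhibiting an $S^2\times S^2$ or twisted $S^2\tilde\times S^2$ connected summand of $M(0,1)$. By Donaldson's diagonalizability theorem combined with the connected-sum vanishing of Seiberg-Witten invariants (as used in the irreducibility lemma above), $SW_{M(0,1)}\equiv 0$, so $F(0,1)=0$, and the claim follows for $i=1$; the other indices are identical by symmetry of the construction.
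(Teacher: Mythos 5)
Your reduction via Equation~\eqref{eq:formula} is essentially vacuous here: since $(p_i,q_i)$ is primitive, $p_i=0$ forces $q_i=\pm 1$, so the manifold $P$ in the lemma \emph{is} already $M(0,\pm 1)$ and $F(0,1)$ is just (a sum of) its own Seiberg--Witten invariants. Nothing is gained, and the whole burden falls on your second step — and that step has a genuine gap. First, your description of the surgery is off: when $[\partial D^2]$ is glued to the push-off class, it is the push-off curve that bounds a disk in the new $D^2\times T^2$, not the meridian $\mu_i$; the correct sphere construction (this is the paper's Lemma~\ref{lem:spheres}) takes a torus in $T^4$ meeting $\nu T_i$ in an annulus whose boundary circles are parallel to the framed push-off, deletes the annulus and caps with two disks in the glued-in piece. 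Second, and more seriously, an essential square-zero sphere does not yield an $S^2\times S^2$ (or twisted) summand: a parallel copy of a square-zero sphere is \emph{disjoint} from it, not transverse in one point, so no hyperbolic pair of spheres and no connected-sum splitting is produced, and the connected-sum vanishing theorem you invoke never gets off the ground. Third, even the sphere itself does not survive into the closed manifold $P$: it meets $G_2$ transversally once, so after removing $\operatorname{int}(N_2)$ and gluing to the second copy it closes up only to a higher-genus surface, not a sphere; you would at best be left appealing to a vanishing theorem for essential square-zero spheres in $P$ that you have not produced.

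The paper's actual argument avoids all of this: it uses Lemma~\ref{lem:spheres} only to obtain, in $P$, a genus-one class $y$ with $y^2=0$ and $y\cdot[G_2]=1$ (the punctured sphere glued to a punctured torus from the other copy), together with the hyperbolic pairs of square-zero tori. The adjunction inequality forces any basic class $L$ to vanish on all these classes, and since the orthogonal complement of their span is negative semi-definite this gives $L^2\le 0$; but non-negativity of the formal dimension of the moduli space requires $L^2\ge 2\chi(P)+3\sigma(P)=4$, a contradiction. If you want to salvage your route, you would have to either prove directly that $P$ with $p_i=0$ contains an essential square-zero \emph{sphere} (not clear) and then quote a genuine vanishing theorem for such spheres, or else fall back on the adjunction/dimension-count argument as in the paper.
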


Before the proof of this lemma, we make a topological observation
about the four-manifold $W_2(0, p_2/q_2)$.

\begin{lem}\label{lem:spheres}
  The four-manifold  $W_2(0,p_2/q_2)$
  of Equation~\eqref{eq:defofw2} contains a smoothly embedded sphere $S_1$ of
  self-intersection $0$ that represents the homology class of
  $[T^2\times pt]$ and $S_1$ intersects $G_2$ transversally once.
  Similarly $W_2(p_1/q_1,0)$ contains a smoothly embedded sphere $S_2$
  of self-intersection $0$ that represents the homology class of
  $[pt\times T^2]$ and $S_2$ intersects $G_2$ transversally once.
\end{lem}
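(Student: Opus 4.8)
The plan is to exhibit the sphere $S_1$ explicitly by tracking what happens to the torus $F_1 = T^2\times pt$ of $T^4$ under the torus surgery along $T_1$ with coefficient $0$, and then under the two blow-ups. The key point is that the surgery coefficient $p_1/q_1 = 0/q_1$ means the gluing sends $[\partial D^2]$ to $q_1[m_1] = q_1[x]$ (using the notation of Subsection~\ref{subsec:cons}, where $m_1 = x$), so that a meridian disk of the glued-in $D^2\times T^2$ caps off $q_1$ copies of the curve $x$; equivalently, the curve $x = m_1$ on $T_1^3$ now bounds a disk in $W_2(0,p_2/q_2)$. First I would choose the torus $F_1\subset T^4$ in the homology class $[T^2\times pt]$ that meets $T_1$ in a single circle parallel to $m_1 = x$ (this is possible since $T_1 = \{z_2 = c_2, z_4 = c_4\}$ and $F_1$ can be taken to be $\{z_2 = c_2', z_4 = c_4''\}$ suitably isotoped to meet $T_1$ along one $z_1$-circle). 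After removing the tubular neighborhood $\nu T_1$, the surface $F_1$ becomes a once-punctured torus whose boundary is exactly the curve $c_f$ representing $[m_1]$ on $T_1^3$; after the $0$-surgery this boundary circle bounds the meridian disk in the glued-in piece, so capping off produces a sphere $S_1$ of self-intersection $0$ in the class $[T^2\times pt]$.

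Next I would check that $S_1$ meets $G_2$ transversally once. Recall that $G_2$ is built inside $M_0 = ({\bf D}\times T^2)\cup(T^2\times{\bf D})$ by smoothing the intersection of $T^2\times(\epsilon,\epsilon)$ and $(\epsilon,\epsilon)\times T^2$ and then blowing up at two points. In $T^4$ the torus $F_1 = T^2\times pt$ meets $T^2\times(\epsilon,\epsilon)$ in nothing (parallel sections) but meets $(\epsilon,\epsilon)\times T^2$ in one point $[T^2\times pt]\cdot[pt\times T^2] = 1$; choosing $F_1$ disjoint from the blow-up points $k_1, k_2$ and arranging it to lie away from $M_0$ except near that one intersection point, $F_1$ — hence $S_1$ — meets the smoothed surface $G_2$ transversally in a single point. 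The blow-ups do not affect $S_1$ since it can be chosen to avoid $k_1$ and $k_2$, and they change $G_2$ only in small neighborhoods of those points, so the single transverse intersection persists in $W_2(0,p_2/q_2)$. The statement for $W_2(p_1/q_1,0)$ follows by the symmetric argument, now using that the second surgery has coefficient $0$, that $\ell_2 = bab^{-1}$ and $m_2 = y$ (so the relevant curve on $T_2^3$ that gets capped off is again a homologically primitive curve on $T_2$), and the fact that $[pt\times T^2]$ meets $G_2$ once.

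The main obstacle I anticipate is verifying cleanly that the resulting surface is actually a \emph{sphere} and not merely a torus: one must be sure the capping disk is glued to the full boundary circle of the punctured $F_1$ and reduces the genus by exactly one, which requires that the framing $f$ used in the surgery and the product framing of $F_1$ in $T^4$ are compatible, i.e. that the push-off of $m_1$ used to define $c_f$ is the one coming from $F_1$ itself. This is where one uses the explicit description of the pushoffs from \cite[Theorem~2]{BK} recalled in Subsection~\ref{subsec:cons}: since $m_1 = x$ and the Lagrangian framing pushoff is compatible with the product structure on $H\times K$, the surface $F_1\setminus \nu T_1$ genuinely closes up to a sphere rather than picking up extra genus. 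A secondary, more routine point is confirming the self-intersection number is $0$: this follows because $[T^2\times pt]^2 = 0$ in $T^4$ and the surgery together with the capping is supported in a neighborhood of $T_1$ disjoint from a parallel copy of $S_1$, so the homological self-intersection is unchanged.
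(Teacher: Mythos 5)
Your overall strategy is the same as the paper's: take a torus representative of $[T^2\times pt]$ passing near $T_1$, use the fact that after the $0$-surgery the Lagrangian push-off $m_1=x$ bounds a disk in the glued-in $D^2\times T^2$, cap off to get a sphere, and then check that it can be made disjoint from the blow-up points and the smoothing region while meeting $G_2$ in one transverse point. The intersection count with $G_2$ and the self-intersection argument are fine.

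However, there is a genuine gap in the step that is supposed to produce a \emph{sphere}, which you yourself flag as the main obstacle but then resolve incorrectly. First, the explicit representative you propose, $\{z_2=c_2',\ z_4=c_4''\}$, is a torus parallel to $T_1$ (spanned by the $z_1,z_3$ directions), so it does not lie in the class $[T^2\times pt]$; the correct choice is $\{z_3=w_3,\ z_4=c_4\}$, which is in the class $[T^2\times pt]$ and meets $T_1$ in the $z_1$-circle $\{z_2=c_2,\ z_3=w_3,\ z_4=c_4\}$ precisely because it shares the value $z_4=c_4$ with $T_1$. Second, and more importantly, since this intersection circle is \emph{essential} in the torus $F_1$, the complement $F_1-\nu T_1$ is an \emph{annulus} whose boundary consists of \emph{two} circles on $T^3_1$ (the two push-offs $z_2=c_2\pm\delta_1$), both isotopic to $m_1$ -- not a once-punctured torus with a single boundary curve as you assert. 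Capping a once-punctured torus with one disk gives a closed torus ($\chi=-1+1=0$), never a sphere, and no choice of framing changes this; so your proposed resolution via ``framing compatibility'' cannot work, and capping does not ``reduce the genus by one.'' The genus is killed because the surgery supplies two disjoint parallel meridian disks in $D^2\times T^2$, one for each boundary circle of the annulus; replacing the annulus $F_1\cap\nu T_1$ by these two disks is a compression of $F_1$ along the essential curve $m_1$, and the result has $\chi=0+1+1=2$, i.e.\ is the desired sphere $S_1$. The same correction applies to your sketch for $S_2$, where the relevant curve made to bound by the $0$-surgery along $T_2$ is $\ell_2$ (the $a$-direction), and one compresses a torus of the form $\{z_1=c_1,\ z_2=w_2\}$ in the class $[pt\times T^2]$ along that circle.
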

  
\begin{proof}
  For the construction of $S_1$ recall that the torus $T_1$ is given
  by the equations $z_2=c_2$, $z_4 = c_4$. Choose a generic
  $w_3$. Then the torus $z_3=w_3$, $z_4=c_4$ intersects in $T^4$ the
  tubular neighborhood of $T_1$ in an annulus, and intersects the
  boundary torus $T^3_1$ in two circles: $z_2=c_2\pm \delta_1$,
  $z_3=w_3$, $z_4=c_4$. Both of these circles represent the framing
  $m_1$ so they bound disjoint embedded disks in the $D^2\times T^2$
  part that we glue to $T^3_1$ when doing surgery with slope
  $m_1$. Deleting the annulus and gluing in the two disjoint disks we
  get an embedded sphere $S_1$ in $V(0,p_2/q_2)$.  Choose $w_3$ so
  that $S_1$ is disjoint from the blow-up points and also the
  smoothing region. Then $S_1$ is also in $W_2(0,p_2/q_2)$ and
  intersects $G_2$ transversally in one point.  The sphere $S_2$ can
  be constructed by the same argument.
\end{proof}

\begin{proof}[Proof of Lemma~\ref{lem:vanishing}]
  Let $p_i=0$ for some $i$. Let $L$ be a characteristic class in
  $H_2(P; \Z)$ with $SW_P(L)\neq 0$.  In  case  $q_j=0$  for some $j \neq i$ we
  get a corresponding hyperbolic pair in
  $H_2(P; \Z)$ represented by smoothly embedded tori
  of square 0.  Using Lemma~\ref{lem:spheres} we also get a homology class
  $y \in H_2(P; \Z )$ with the property that $y$ is represented by a
  smoothly embedded surface of genus 1, $y^2=0$, $y\cdot [G_2]=1$. By
  the adjunction inequality $L$ evaluates on the hyperbolic pairs and
  also on $y$ trivially. Since the orthogonal complement of the
  subspace represented by these classes is negative semi-definite in
  $H_2(P,\Q)$, it follows that $L^2\leq 0$.  However for a
  Seiberg-Witten basic class $L$ (as the formal dimension of the
  corresponding moduli space is non-negative) we have the inequality
  $L^2\geq 2\chi(P)+3\sigma (P)$, and $2\chi(P)+3\sigma (P)=4$.
  \end{proof}

\begin{proof}[Proof of Theorem~\ref{thm:X-SW}]
  In order to compute of the Seiberg-Witten invariants of
  $X_n=P(-1,-n,-1,-n)$, we start from $U= P(1/0, 1/0, 1/0, 1/0)$ and
  the basic classes $\pm L_0=\pm c_1(U, \omega _U)$ of $U$. Then we do
  the four surgeries one after the other to go from $U$ to
  $P(-1,1/0,1/0,1/0)$, to $P(-1,-n,1/0,1/0)$, to $P(-1,-n,-1,1/0)$, and
  finally to $P(-1,-n,-1,-n)$.  During this process each surgery kills
  one hyperbolic pair in $H_2$ given by the surgery torus and its dual
  torus.  Observe that $L_0$ evaluates trivially on the surgery torus,
  and so it gives a unique characteristic class after the surgery.
  Let $L_1$, $L_2$, $L_3$ and $L_4$ denote the images of $L_0$. The
  adjunction inequality argument in Lemma~\ref{lem:U} works for these
  manifolds as well (the only difference is the number of hyperbolic
  pairs spanned by smoothly embedded tori). It follows that for
  characteristic classes different from $\pm L_i$ the Seiberg-Witten
  invariant vanishes for the four-manifold at hand. To compute the
  value of the Seiberg-Witten function on $\pm L_i$, we use the
  formula of Equation~\eqref{eq:formula} inductively, together with
  Lemma~\ref{lem:vanishing}. It follows that $SW(L_1)=\pm 1$,
  $SW(L_2)=\pm n$, $SW(L_3)=\pm n$ and finally $SW(L_4)=\pm n^2$
  finishing the calculation.
\end{proof}


\begin{cor}\label{cor:X}
  For $n \in \N$ the four-manifold $X_n = P(-1,-n,-1,-n)$ are
  homoeomorphic but not diffeomorphic to $\cpk \# 5\cpkk$.  The
  manifolds $X_n$ and $X_m$ are diffeomorphic if and only if $n=m$.
\end{cor}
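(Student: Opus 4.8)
The plan is to assemble the pieces that have already been established in this section. The homeomorphism statement is exactly Corollary~\ref{cor:homeob2=2}: for every $n\in\N$ the manifold $X_n$ is homeomorphic to $\cpk\#5\cpkk$. To see that no $X_n$ is diffeomorphic to $\cpk\#5\cpkk$, recall that $\cpk\#5\cpkk$ is a rational surface, hence (having a positive scalar curvature metric, or because it is diffeomorphic to a blow-up of $\cpk$) has vanishing Seiberg-Witten invariants; on the other hand Theorem~\ref{thm:X-SW} exhibits the nonzero basic class $L$ with $SW_{X_n}(\pm L)=\pm n^2\neq 0$ for $n\geq 1$. Since $b_2^+(X_n)=1$ and $b_2^-(X_n)=5\leq 9$, the small-perturbation Seiberg-Witten invariant is a diffeomorphism invariant up to sign, so $X_n\not\cong \cpk\#5\cpkk$.

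For the classification up to diffeomorphism among the $X_n$ themselves, I would argue again by the diffeomorphism invariance of $SW$ (valid because $b_2^+=1$, $b_2^-=5\leq 9$). A diffeomorphism $X_n\to X_m$ would carry basic classes to basic classes, respecting the values of $SW$ up to sign. By Theorem~\ref{thm:X-SW} the set of nonzero Seiberg-Witten values of $X_n$ is $\{\pm n^2\}$ and that of $X_m$ is $\{\pm m^2\}$; hence $n^2=m^2$, and since $n,m\in\N$ are positive, $n=m$. Conversely, if $n=m$ the manifolds are literally the same, so they are trivially diffeomorphic. This proves the ``if and only if''.

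The only genuinely delicate point is making sure the Seiberg-Witten invariant really is a diffeomorphism invariant in the $b_2^+=1$ regime: for $b_2^+=1$ one must use the small-perturbation (or chamber-specified) invariant, and the relevant fact is that when $b_2^-\leq 9$ the wall-crossing does not occur for the classes in question, so $SW_{X_n}$ is well defined up to overall sign and is a diffeomorphism invariant, as recalled in Subsection on Seiberg-Witten invariants. This was already invoked implicitly in stating Theorem~\ref{thm:X-SW}, so in the write-up I would simply cite that discussion. The rest is a one-line bookkeeping argument combining Corollary~\ref{cor:homeob2=2} and Theorem~\ref{thm:X-SW}.

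\begin{proof}
By Corollary~\ref{cor:homeob2=2}, $X_n$ is homeomorphic to $\cpk\#5\cpkk$ for every $n\in\N$. Since $b_2^+(X_n)=1$ and $b_2^-(X_n)=5\leq 9$, the Seiberg-Witten function $SW_{X_n}$ is, up to sign, a diffeomorphism invariant. The manifold $\cpk\#5\cpkk$ is a rational surface and therefore has vanishing Seiberg-Witten invariants, while by Theorem~\ref{thm:X-SW} the class $\pm L$ is a basic class of $X_n$ with $SW_{X_n}(\pm L)=\pm n^2\neq 0$; hence $X_n$ is not diffeomorphic to $\cpk\#5\cpkk$. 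Finally, a diffeomorphism $X_n\to X_m$ would induce a bijection of basic classes preserving Seiberg-Witten values up to sign, so by Theorem~\ref{thm:X-SW} we would have $\{\pm n^2\}=\{\pm m^2\}$, forcing $n=m$ since $n,m$ are positive. Conversely, if $n=m$ then $X_n=X_m$, so the two manifolds are diffeomorphic. This completes the proof.
\end{proof}
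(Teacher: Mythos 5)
Your argument is correct and coincides with the paper's own proof: homeomorphism via Corollary~\ref{cor:homeob2=2}, non-diffeomorphism to $\cpk\#5\cpkk$ from the vanishing of its (small perturbation) Seiberg--Witten invariants versus the nonzero values $\pm n^2$ from Theorem~\ref{thm:X-SW}, and the distinction of $X_n$ from $X_m$ by comparing the values on the unique pair of basic classes, using the diffeomorphism invariance of $SW$ in the $b_2^+=1$, $b_2^-\leq 9$ range. No gaps; your extra remark on chambers simply makes explicit what the paper cites from its background subsection.
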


\begin{proof}
  The homeomorphism follows from Corollary~\ref{cor:homeob2=2}.
  Recall that $\cpk \# 5\cpkk$ has vanishing (small perturbation)
  Seiberg-Witten invariants, while by Theorem~\ref{thm:X-SW} the
  Seiberg-Witten invariants of $X_n$ are non-trival.

  If $n\neq m$ then $SW_{X_n}$ and $SW_{X_m}$ are different by
  Theorem~\ref{thm:X-SW} (distinguished by their Seiberg-Witten values
  on the unique pair of basic classes), so $X_n$ and $X_m$ are not
  diffeomorphic.
  \end{proof}

\subsection{The proof of Theorem~\ref{thm:b2=2}}
\begin{proof}[Proof of Theorem~\ref{thm:b2=2}]
  Consider the four-manifold $X_n = P(-1,-n,-1,-n)$ constructed above,
  with the free involution $\iota _n= \tau(-1,-n)$. Define
  $X'_n=X_n/\iota_n$.  As $\iota _n$ acts free, it follows that $\pi
  _1(X'_n)=\Z /2\Z$ and by Theorem~\ref{thm:TopClassification}
  (resting on the homeomorphism classification of smooth
  four-manifolds with finite cyclic fundamental groups,
  \cite[Theorem~C]{HK}) $X'_n$ is homeomorphic to $Z_1\#2\cpkk$.
  Since $X_n$ is the universal cover of $X'_n$, by applying
  Corollary~\ref{cor:X} it follows that $X'_n$ is diffeomorphic to
  $X'_m$ if and only if $n=m$.  As $X_n$ admits a single pair of basic
  classes, and these classes have square equal to 4, the
  irreducibility of $X_n$, and therefore of $X_n'$, follows at
  once. This observation concludes the proof of the theorem.
  \end{proof}

\section{Exotic structures on $Z_1\# \cpkk$}
\label{sec:b2=1}
The construction of examples in this case will follow similar lines as
the construction of $X_n$ in Section~\ref{sec:b2=2} --- the
difference in the construction, however, will require an even more
involved argument in the computation of $\pi _1$.

We will again use the manifolds $V(p_1/q_1, p_2/q_2)$ described in the
previous section, and we will reuse the notations for the loops
$x,y,a,b$ in the codimension-$0$ submanifolds
$V_0(p_1/q_1,p_2/q_2)\subset V(p_1/q_1,p_2/q_2)$ and $M_0\subset
V(p_1/q_1,p_2,q_2)$. The difference is that now we will find a genus-2
submanifold in $M_0$ blown up only once. This will be done by finding
a braided torus representing $2[T^2\times pt]$ and a torus
representing $[pt\times T^2]$ that intersect transversally twice, as
it has been introduced in \cite{AP}.  Then we smooth one of the
intersection points and blow up the other one. Since we use one
blow-up we will denote this genus-2 submanifold by $G_1$, to
distinguish it from the construction in the previous section.  The
construction follows \cite{AP}, but we make additional choices when
parametrizing a closed tubular neighborhood of $G_1$ with $G_1\times
D^2$.

\subsection{The construction of $G_1$}
For the construction, let $0<\delta \ll \epsilon$, which we will
specify later.  The braiding of the torus will happen inside
$T^2\times D_{2\epsilon}(\sqrt{2}\epsilon+\delta)$.  In order to
specify the torus we need a monotone increasing smooth function
\[
g\colon [0,1]\longrightarrow [0,\pi]
\]
with $g([0,6\epsilon])=0$ and $g([1-2\epsilon,1])=\pi$.
The braided torus is given by the points $(z_1,z_2,z_3,z_4)$ and
$\theta\in \{ \pi/4, 5\pi/4 \}$ which satisfy
\[
(z_3,z_4) =(2\epsilon +\sqrt{2}\epsilon \cdot {\rm cos}(g(z_1)+\theta), 2\epsilon+
\sqrt{2}\epsilon\cdot 
     {\rm sin}(g(z_1)+\theta)).
     \]
It follows from the choice of $g$ that the intersection of the braided
torus with ${\bf D}\times {\bf D}$ is
\[
  ({\bf D}\times (\epsilon,\epsilon))\cup
  ({\bf D}\times (3\epsilon, 3\epsilon)).
  \]
We choose the other torus in the construction to be
$(\epsilon,\epsilon)\times T^2$.  The intersection points between the
two tori are $k_1=(\epsilon, \epsilon, \epsilon, \epsilon)$ and
$k_2=(\epsilon, \epsilon, 3\epsilon, 3\epsilon)$.

Let $W_1=W_1(p_1/q_1,p_2/q_2)$ denote the blow up of $V(p_1/q_1, p_2/q_2)$
at $k_2$, and let $M_1$ denote $M_0$
blown up at $k_2$; obviously $M_1 \subset W_1$.
(Once again, the index is supposed to indicate the number of blow-ups.)
We have two smoothly embedded
tori $F_1$ and $F_2$ in $M_1\subset W_1$ with self-intersections $-1$ that
intersect transversally in $k_1=(\epsilon,\epsilon, \epsilon, \epsilon)$.
Smooth their intersection point $k_1$ inside the region
$D_{\epsilon}(\epsilon/3) \times D_\epsilon(\epsilon/3)$
to get a smoothly embedded genus-2 surface $G_1$ with self-intersection 0.

Now we choose $\delta$ so that the $\delta$-neighborhood $N_1$ of
$G_1$ is diffeomorphic to $G_1 \times D^2$ and is disjoint
from $V_0$.  Let ${\bf D'}= D_\epsilon(2\epsilon)$; then the part of
$T_1\cup T_2$ that is in ${\bf D'}\times {\bf D'}$ corresponds to two
transversally intersecting closed disks.

Note that the fundamental group of
\[
({\bf D'} \times {\bf D'}-G_1)
\]
is isomorphic to $\Z$, and we
choose $\delta$ so small that there is a deformation retraction to
\[ 
({\bf D'}\times {\bf D'})- int(N_1);
\]
it follows that $\pi_1({\bf D'}\times {\bf D'}-int(N_1))\cong \Z$.  Note
that $N_1$ is inside the region $M_1$, and $N_1$ is disjoint from $V_0$.


\subsection{Constructing the four-manifolds $Y_n$}\label{subsec:Yconst}
The construction will be similar to the doubling operation described
in the previous section, where we constructed $X_n$.  In order to
construct the gluing, we will fix a particular diffeomorphism,
described in this subsection, between $N_1$ and $G_1\times D^2$. We
will also need this trivialization in order to compute the map
$i_\ast$ from $\pi_1(G_1\times pt)$ to
$\pi_1(W_1(p_1/q_1,p_2/q_2))$. This computation of $i_\ast$ will be
used in the next subsection to show that the manifolds $Y_n$ are
simply connected.

Let $\lambda_1, \lambda_2, \lambda_3, \lambda_4$ be circles in
$G_1$ given by intersecting $G_1$ with $z_1=0$, $z_2=0$, $z_3=0$
and $z_4=0$ respectively. Note that $\lambda_1$ and $\lambda_2$
intersect at $w_2=(0,0,\epsilon, \epsilon)$ while $\lambda_3$ and
$\lambda_4$ intersect at $w_1 = (\epsilon,\epsilon, 0,0)$.  Choose a
smoothly embedded arc $\gamma_0$ in $\Sigma\cap ({\bf D'} \times {\bf
  D'})$ that goes from $w_1$ to $w_2$ so that $\gamma_0$ intersects
each $\lambda_i$ only at $w_1$ or $w_2$.

\begin{figure}[htb]
\begin{center}
\includegraphics[height=6cm]{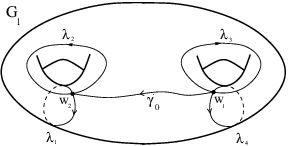}
\end{center}
\caption{\quad The loops $\lambda _i$ ($i=1, \ldots , 4$) and the path
$\gamma _0$ on $G_1$.}
\label{sigma}
\end{figure}

As a preparation to construct a trivialization of  $\partial N_1$,
we use  push-offs of $\lambda_i$ in $\partial N_1$, defined below.
Define $\nu_1$ as the set of points $(z_1,0,z_3,z_4)$ and
$\theta \in \{\pi/4, 5\pi/4\}$
where 
\[
(z_3,z_4) =(2\epsilon + (\sqrt{2}\epsilon+\delta) cos(g(z_1)+\theta), 2\epsilon+
(\sqrt{2}\epsilon+\delta) sin(g(z_1)+\theta)).
\]
Let $\nu_2$ be given by
\[
\{ z_1=0, z_3=z_4=\epsilon-(\delta/\sqrt{2})\},
\]
define $\nu_3$ as
\[
\{z_1=z_2=\epsilon-(\delta/\sqrt{2}), z_4=0\},
\]
and $\nu_4$ by the formula
\[
\{ z_1=z_2=\epsilon-(\delta/\sqrt{2}), z_3=0\}.
\]
Orient these circles so that in homology $[\nu_1]=2[x]$, $[\nu_2]=y$,
$[\nu_3]=a$ and $[\nu_4]=b$.

Note that $x_1=
(\epsilon-(\delta/\sqrt{2}),\epsilon-(\delta/\sqrt{2}),0,0)$, is the
intersection point between $\nu_3$ and $\nu_4$, while $x_2= (0,0,
\epsilon-(\delta/\sqrt{2}),\epsilon-(\delta/\sqrt{2}))$, is the
intersection point between $\nu_1$ and $\nu_2$.  From now on we will
treat $\nu_i$ as based loops, where $\nu_1$, $\nu_2$ are based at
$x_2$, and $\nu_3$, $\nu_4$ are based at $x_1$.  Let $\gamma_1$ be a
straight arc from $x_1$ to $x_0$ and $\gamma _2$ be a straight arc
from $x_2$ to $x_0$ inside ${\bf D'}\times {\bf D'}$.

Finally, choose an embedded arc $\gamma_3$ that is a push-off of 
$\gamma_0$ to $\partial N_1$ with the property that the homology
class of the loop
\[
\gamma_3\ast \gamma _2 \ast {\overline \gamma _1}
\]
based at $x_1$ is zero in $H_1(({\bf D'}\times {\bf D'})-int(N_1);
\Z)\cong \Z$. This can be achieved since the meridional circle around
$G_1$ that goes through $x_1$ generates $H_1(({\bf D'}\times {\bf
  D'})-int(N_1); \Z)\cong \Z$. We will also require that $\gamma_3$ is
smoothly embedded with the property that its tangent direction at
$x_1$ lies in the two-dimensional subspace spanned by the tangent
vectors of $\nu_3$ and $\nu_4$, and its tangent direction at $x_2$
lies in the two-dimensional subspace spanned by the tangent vectors of
$\nu_1$ and $\nu_2$.
Since $\pi_1(({\bf D'}\times {\bf D'})-int(N_1), x_1) \cong \Z$, it follows that
\[
\gamma_3\ast \gamma _2 \ast {\overline \gamma _1}
\]
is path homotopic to the constant loop $x_1$ in $({\bf D'}\times {\bf
  D'})-int(N_1)$.

Now we choose a parametrization of $N_1$ with $G_1\times D^2$ so
that all the loops $\nu_i$ and the arc $\gamma_3$ lie on some
$G=G_1\times \{ pt\}$, where $pt \in \partial D^2$. This can be achieved
by the following simple observation: Fix a parametrization
\[
f\colon N_1 \longrightarrow G_1 \times D^2
\]
and a $CW$
decomposition of $G_1$ with two 0-cells $w_1$ and $w_2$, five 1-cells
corresponding to $\lambda_i$ and $\gamma_0$, and a 2-cell, see
Figure~\ref{sigma}. Our
fixed push-off, composed with $f$ and projected to the $D^2$-factor
gives a map from the 1-skeleton to $\partial D^2 = S^1$. Since the
boundary of the $2$-cell goes through each 1-cells twice and with
opposite orientations, it follows that the map from the boundary of
the two-cell to $S^1$ is null-homologous, so it can be extended to the
two cell. This gives a section $G_1 \to G_1\times S^1$
that goes through all the push-offs as requested. Now we can use this section to give a new parametrization of $N_1$. Note that by the
earlier assumption on the choice of $\gamma _3$ we can also guarantee
that this parametrization is smooth.
In the normal direction to $G_1$ there is also a loop
$m$ based at $x_1$ given by
$\partial D_{\epsilon}(\delta)\times 0\times 0$, oriented counterclockwise.

We have now four elements in $\pi_1(G, x_1)$
represented by the loops $u_1 = \gamma_3\ast \nu_1 \ast {\overline
  \gamma_3}$, $v_1 = \gamma_3\ast \nu_2 \ast {\overline \gamma_3}$,
$u_2=\nu_3$ and $v_2=\nu_4$.  We will also use the separating circle
$\mu$ in $G$, where $[u_1,v_1]=\mu$ and
$[u_2,v_2]=\mu^{-1}$, and the normal loop $m$ that commutes in
$\pi_1(\partial N_1, x_1)$ with $u_1,v_1, u_2,v_2$. These elements
generate the group $\pi_1(\partial(N_1), x_1)$.

\begin{figure}[htb]
\begin{center}
\setlength{\unitlength}{1mm}
\includegraphics[height=6cm]{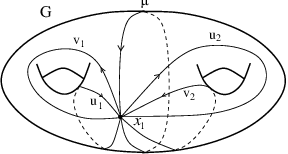}
\end{center}
\caption{\quad Embedded curves that represent the homotopy classes
  $u_1, v_1, u_2, v_2$ and $\mu$ in $\pi_1(G,x_1)$.}
\label{sigma2}
\end{figure}

Consider two such manifolds $X=W_1(p_1/q_1,p_2/q_2)-int(N_1)$ and
$X'=W_1(p_3/q_3,p_4/q_4)-int(N_1)$.  Using the trivializations
described above, the boundaries $\partial (X) $ and $\partial (X')$
are identified with $G_1\times S^1$ in such a way that the curves
$u_1,v_1,u_2,v_2$ lie on $G = G_1\times \{ pt\} $.  Then define
$$\Psi : G_1\times S^1\longrightarrow G_1\times S^1$$
as in the previous section, by $\Psi(g,\theta)=(\psi(g),\theta)$,
with the understanding that in the definition of $\psi$ we
use the curves representing $u_1$, $v_1$, $u_2$, $v_2$ instead of $s_1$, $t_1$, $s_2$ and $t_2$, see
Figure~\ref{sigma2}.

Now define the manifolds
$Q(p_1/q_1,p_2/q_2,p_3/q_3,p_4/q_4)$
by
\[
Q(p_1/q_1,p_2/q_2,p_3/q_3,p_4/q_4)=
\]
\[
=W_1(p_1/q_1,p_2/q_2)-int(N_1)\cup _\Psi
W_1(p_3/q_3,p_4/q_4)-int(N_1).
\]

\begin{defn}\label{def:Y}
  For a positive integer $n\in \N$, let $Y_n$ denote the smooth
  four-manifold $Q(-1,-n,-1,-n)$.
\end{defn}


\subsection{Fundamental group calculations}
In this section we will show that $Y_n$ are simply connected.  A key
ingredient is the computation of the map $i_\ast$ from $\pi_1(G,x_1)$
to $\pi_1(M_1-int(N_1),x_1)$.  We start with some observations.

\begin{lem}\label{lem:spaces}
  The following spaces lie in $M_1-int(N_1)$:
\begin{itemize}
\item $({\bf D}- int(D_{\epsilon}(\delta)))\times S^1\times 0$,
\item $({\bf D} - int(D_{\epsilon}(\delta)))\times 0\times S^1$,
\item $0\times S^1\times ({\bf D}- int(D_{2\epsilon}(\sqrt{2}\epsilon+\delta))$,
\item $S^1\times 0\times ({\bf D}- int(D_{2\epsilon}(\sqrt{2}\epsilon+\delta))$, and
\item   $(T^2-int (D_{\epsilon}(\delta)))\times 0\times 0$. \qed
\end{itemize}
\end{lem}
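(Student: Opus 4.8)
The plan is to check, separately for each of the five subspaces, the two properties that make up the claim: that it lies in $M_1$, and that it is disjoint from $\mathrm{int}(N_1)$. Recall that $M_1$ is $M_0=({\bf D}\times T^2)\cup(T^2\times {\bf D})$ blown up at $k_2=(\epsilon,\epsilon,3\epsilon,3\epsilon)$, and that $N_1$ is the $\delta$-neighborhood of $G_1$ with $0<\delta\ll\epsilon$, so that $\mathrm{int}(N_1)$ consists of the points at distance strictly less than $\delta$ from $G_1$.

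I would start from the observation that each of the five subspaces has one of its coordinates ($z_4$, $z_3$, $z_1$, $z_2$, and $z_3=z_4$, respectively) identically equal to $0$, and hence stays at distance at least $\epsilon$ from $k_2$. Two things follow: each subspace misses the small ball about $k_2$ in which the blow-up is supported, so that membership in $M_1$ reduces to membership in $M_0$; and over the part of $M_1$ that the subspaces meet we may take $N_1$ to be the metric $\delta$-neighborhood of $G_1$ for the flat metric on $[0,1]^4$, the blow-up playing no further role. Membership in $M_0$ is then read off directly: the first two subspaces lie in ${\bf D}\times T^2$ because their $(z_1,z_2)$-coordinate lies in ${\bf D}$; the third and fourth lie in $T^2\times{\bf D}$ because their $(z_3,z_4)$-coordinate lies in ${\bf D}$; and the fifth lies in $T^2\times{\bf D}$ since there $(z_3,z_4)=(0,0)\in{\bf D}$.

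For disjointness from $\mathrm{int}(N_1)$, away from the ball about $k_2$ the surface $G_1$ is built from three pieces, and it is enough to bound below by $\delta$ the distance from each subspace to each piece, using that the coordinate projections from $T^4=[0,1]^4$ onto its coordinate subtori are $1$-Lipschitz. The three pieces are: (i) the part lying in the braided torus $F_1$, on which $(z_3,z_4)$ runs over the circle $C$ of radius $\sqrt{2}\,\epsilon$ about $(2\epsilon,2\epsilon)$, so that $z_3,z_4\ge(2-\sqrt{2})\epsilon$ there; (ii) the part lying in $F_2=(\epsilon,\epsilon)\times T^2$, on which $(z_1,z_2)=(\epsilon,\epsilon)$; and (iii) the smoothing region near $k_1$, which lies inside $D_{\epsilon}(\epsilon/3)\times D_{\epsilon}(\epsilon/3)$, so that all four coordinates lie within $\epsilon/3$ of $\epsilon$ there. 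Now: the first two subspaces have $z_4=0$ (resp. $z_3=0$), at distance $\ge(2-\sqrt{2})\epsilon\gg\delta$ from (i) and $\ge\frac{2}{3}\epsilon\gg\delta$ from (iii), while their $(z_1,z_2)$-coordinate, lying outside $\mathrm{int}(D_{\epsilon}(\delta))$, is at distance $\ge\delta$ from $(\epsilon,\epsilon)$ and hence from (ii). The third and fourth subspaces have $z_1=0$ (resp. $z_2=0$), at distance $\ge\frac{2}{3}\epsilon\gg\delta$ from (ii) and (iii), while their $(z_3,z_4)$-coordinate, lying outside $\mathrm{int}(D_{2\epsilon}(\sqrt{2}\epsilon+\delta))$, is at distance $\ge\delta$ from $C$ and hence from (i). The fifth subspace has $z_3=z_4=0$, which disposes of (i) and (iii) exactly as above, while its $(z_1,z_2)$-coordinate lying outside $\mathrm{int}(D_{\epsilon}(\delta))$ disposes of (ii). In every case the distance from the subspace to $G_1$ is at least $\delta$, so the subspace meets $N_1$ at most along $\partial N_1$ and therefore avoids $\mathrm{int}(N_1)$.

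I do not anticipate a genuine obstacle: the argument is no more than careful bookkeeping of flat Euclidean distances dictated by the explicit coordinates of the construction. The one point that does need care is the choice of radii for the excised disks. They are taken to be exactly $\delta$ in the $(z_1,z_2)$-direction and exactly $\sqrt{2}\,\epsilon+\delta$ in the $(z_3,z_4)$-direction --- and no smaller --- precisely so that the five subspaces still contain the push-off circles $\nu_1,\dots,\nu_4$, which lie on $\partial N_1$ and are used to trivialize it; correspondingly several of the estimates above are the sharp equality $\delta$ rather than a strict inequality, which is exactly what allows the subspaces to touch $\partial N_1$ without entering $\mathrm{int}(N_1)$.
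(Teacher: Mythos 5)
Your verification is correct, and it is exactly the kind of argument the paper has in mind: the lemma is stated with a \qed and no written proof, being regarded as immediate from the explicit coordinates of ${\bf D}$, the braided torus, $F_2=(\epsilon,\epsilon)\times T^2$, the smoothing region $D_{\epsilon}(\epsilon/3)\times D_{\epsilon}(\epsilon/3)$, and the blow-up point $k_2$. Your coordinate-by-coordinate distance bookkeeping (including the observation that the excised radii $\delta$ and $\sqrt{2}\epsilon+\delta$ are sharp so that the subspaces meet $\partial N_1$ along the push-off circles but miss $\mathrm{int}(N_1)$) supplies precisely the omitted routine check, with no gaps.
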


\begin{lem}\label{path}
  We  have the following path homotopies in $M_1-int(N_1)$:
  \begin{itemize}
  \item ${\overline \gamma_1}\ast \nu_3 \ast \gamma_1$ is path homotopic to $a$,
  \item ${\overline \gamma_1}\ast \nu_4 \ast \gamma_1$ is path homotopic to $b$,
  \item ${\overline \gamma_2}\ast \nu_2 \ast \gamma_2$ is path homotopic to $x$,
  \item ${\overline \gamma_2}\ast \nu_1 \ast \gamma_2$ is path
    homotopic to $x^2[a,b]$, and
  \item ${\overline \gamma_1}\ast m \ast \gamma_1$ is path homotopic to  $[x,y]$.
  \end{itemize}
 \end{lem}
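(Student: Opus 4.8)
The plan is to verify each of the five path homotopies by working in an explicit product neighborhood inside $M_1 - int(N_1)$, using the spaces listed in Lemma~\ref{lem:spaces} as the ambient pieces in which the homotopies take place. The key point is that each $\nu_i$ was defined by explicit coordinate equations, each based loop $x,y,a,b$ is a coordinate circle, and the connecting arcs $\gamma_1,\gamma_2$ are straight arcs inside ${\bf D'}\times {\bf D'}$; so in each case both the loop $\overline{\gamma_j}\ast\nu_i\ast\gamma_j$ and its claimed image lie in one of the codimension-zero submanifolds of Lemma~\ref{lem:spaces}, whose fundamental group is easy to compute.

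First I would treat the three ``easy'' bullets: $\overline{\gamma_1}\ast\nu_3\ast\gamma_1\simeq a$, $\overline{\gamma_1}\ast\nu_4\ast\gamma_1\simeq b$, and $\overline{\gamma_2}\ast\nu_2\ast\gamma_2\simeq x$. Here $\nu_3$ and $\nu_4$ sit in slices of the form $S^1\times 0 \times ({\bf D}-\cdots)$ or $0\times S^1\times({\bf D}-\cdots)$ type regions (more precisely $\nu_3 = \{z_1=z_2=\epsilon-\delta/\sqrt2,\ z_4=0\}$, which is homologous to $a$ and sits in a region where the first two coordinates are fixed near $\epsilon$, so it can be slid — together with $\gamma_1$, which merely adjusts the $(z_1,z_2)$-coordinates back to $0$ while staying in ${\bf D'}\times{\bf D'}$ and hence avoiding $N_1$ — to the coordinate circle $a=(0,0,t,0)$). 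The relevant ambient space has abelian (indeed infinite cyclic in the pertinent factor) fundamental group once one uses the fifth space of Lemma~\ref{lem:spaces} and the product structure, so homologous loops based at the same point are path-homotopic. The same argument with roles of coordinates permuted handles $\nu_4$ and $\nu_2$.

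The harder bullet is $\overline{\gamma_2}\ast\nu_1\ast\gamma_2\simeq x^2[a,b]$, because $\nu_1$ is the \emph{braided} push-off: it is the set of points with $\theta\in\{\pi/4,5\pi/4\}$ and $(z_3,z_4)$ running along a circle of radius $\sqrt2\epsilon+\delta$ about $(2\epsilon,2\epsilon)$ as $z_1$ traverses $[0,1]$, with $g$ sweeping from $0$ to $\pi$. So as $z_1$ goes once around, the two strands (at angular offset $\pi$) are interchanged, and the loop $\nu_1$ wraps twice in the $z_1$-direction while its $(z_3,z_4)$-projection encircles the removed disk $D_{2\epsilon}(\sqrt2\epsilon+\delta)$. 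In $\pi_1$ of the region $S^1\times 0\times({\bf D}-int(D_{2\epsilon}(\sqrt2\epsilon+\delta)))$ — or rather the appropriate four-dimensional region containing both $\nu_1$ and the $a,b$-circles — the class of $\nu_1$ must be computed by tracking this braiding: the double wrap gives the $x^2$, and the monodromy of the braid around the puncture, together with the fact that the meridian of $G_1$ (equivalently of the smoothed/blown-up locus) is identified with $[x,y]$-type commutators and the $a,b$ loops come from the second torus factor, produces the correction term $[a,b]$. I expect this is where most of the care is needed: one must set up a genuine van Kampen / explicit-homotopy computation in the non-abelian fundamental group of the braided-torus complement, rather than invoking commutativity. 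A clean way to organize it is to first establish the last bullet, $\overline{\gamma_1}\ast m\ast\gamma_1\simeq[x,y]$ (the normal meridian $m = \partial D_\epsilon(\delta)\times 0\times 0$ bounds a punctured slice $(T^2-int\,D_\epsilon(\delta))\times 0\times 0$ in $M_1-int(N_1)$, whose boundary relation in $\pi_1(T^2\times pt\times pt)$ reads $[x,y]$), and then use $m$ as the ``correction generator'' when pushing $\nu_1$ off the braided torus to the straightened coordinate loop $x^2$: each crossing change in the braid contributes a meridian, i.e. a copy of $[x,y]$, and bookkeeping these — together with the commutator $[a,b]$ coming from how the straight arc $\gamma_2$ and the second-factor coordinates interact — yields exactly $x^2[a,b]$.

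Finally I would assemble the five statements, being careful that all homotopies are \emph{path} homotopies rel endpoints (so the base-point bookkeeping with $\gamma_1,\gamma_2$ matters), and that every loop and every stage of every homotopy stays inside $M_1-int(N_1)$; this is guaranteed because the straight arcs lie in ${\bf D'}\times{\bf D'}$ and we chose $\delta\ll\epsilon$ so that $N_1$ is a thin neighborhood of $G_1$ missing all the coordinate slices in Lemma~\ref{lem:spaces}. The main obstacle, to reiterate, is the fourth bullet: getting the precise word $x^2[a,b]$ rather than merely $x^2$ up to conjugation requires honest tracking of the braiding and of the meridian contributions, and is the step I would write out in full detail.
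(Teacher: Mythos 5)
Your handling of the first, second, third and fifth bullets is essentially the paper's argument: in each case the based loop and its claimed target lie together in one of the slices of Lemma~\ref{lem:spaces} (for the meridian $m$, the punctured torus $(T^2-int(D_{\epsilon}(\delta)))\times 0\times 0$, whose boundary word is $[x,y]$), and homologous loops with the same basepoint in a space with abelian fundamental group are path homotopic; the bookkeeping with $\gamma_1,\gamma_2$ is as you describe.

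The gap is in the fourth bullet, exactly the step you defer. The mechanism you propose --- a non-abelian van Kampen computation in the braided-torus complement in which ``each crossing change contributes a meridian, i.e.\ a copy of $[x,y]$,'' with $[a,b]$ coming from some interaction of $\gamma_2$ with the second factor --- is not how the word $x^2[a,b]$ arises, and carried out as stated it would introduce spurious $[x,y]$-factors: the loop $\nu_1$ stays at radius $\sqrt{2}\epsilon+\delta$ from $(2\epsilon,2\epsilon)$ in the $(z_3,z_4)$-coordinates and never meets $N_1$, so no meridian of $G_1$ can enter this identity. The point you missed is that commutativity \emph{can} be invoked here: since $\nu_1$ has $z_2=0$, the entire based loop ${\overline \gamma_2}\ast \nu_1 \ast \gamma_2$ lies in the fourth space of Lemma~\ref{lem:spaces}, namely $S^1\times 0\times ({\bf D}-int(D_{2\epsilon}(\sqrt{2}\epsilon+\delta)))$, whose fundamental group is $\Z^2$. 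In that slice the loop is homologous to twice the $z_1$-circle plus once a loop $\lambda$ encircling the removed disk (as $g$ sweeps from $0$ to $\pi$ the two strands at angular offset $\pi$ are exchanged, so $z_1$ is doubly covered while the $(z_3,z_4)$-angle advances by $2\pi$ in total), hence it is path homotopic there to $x\ast x\ast \lambda$. The only remaining input is that $\lambda$, viewed in the slice $0\times 0\times (T^2-int(D_{2\epsilon}(\sqrt{2}\epsilon+\delta)))$, which also lies in $M_1-int(N_1)$, is the boundary of a once-punctured torus and therefore path homotopic to $[a,b]$. That is the sole source of the commutator correction; no braiding bookkeeping beyond the winding count, and no $[x,y]$, occurs in this bullet.
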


\begin{proof}
  All the identities above concern loops based at $x_0$. For the first
  identity note that both loops are in the subspace $({\bf D}-
  int(D_{\epsilon}(\delta)))\times S^1\times 0$ which is by 
  Lemma~\ref{lem:spaces} lies in $M_1-int(N_1)$.
  Since this space has Abelian fundamental
  group, and the loops are homologous, it follows that they are
  homotopic. So they are also homotopic in $M_1-int(N_1)$.

The same reasoning works for the second and the third identities.
For the fourth identity we can still use the fact that fundamental group of
$S^1\times 0\times ({\bf D}-
int(D_{2\epsilon}(\sqrt{2}\epsilon+\delta))$ is Abelian to show that
${\overline \gamma_2}\ast \nu_1 \ast \gamma_2$ is path homotopic in
$M_1-int(N_1)$ to $x\ast x \ast \lambda$, where $\lambda$ is a loop in
$$0\times 0\times {\bf
  D}-int(D_{2\epsilon}(\sqrt{2}\epsilon+\delta))$$ that is based at
$(0,0,0,0)$ and goes around $0\times 0\times 2\epsilon \times
2\epsilon$ counterclockwise once.  Since $$0\times 0\times (T^2-
int(D_{2\epsilon}(\sqrt{2}\epsilon+\delta)))$$ is in $M_1-int(N_1)$,
it follows that $\lambda$ is path homotopic in $M_1-int(N_1)$ to
$[a,b]$.  For the last statement we get the path homotopy inside
$(T^2-int (D_{\epsilon}(\delta)))\times 0\times 0$, which is in
$M_1-int(N_1)$.
\end{proof}

Moving the basepoint in computations involving fundamental groups is
always a delicate issue; we need to specify this move carefully. When
moving the basepoint from $x_0$ to $x_1$, in our subsequent arguments
we will use the arc $\gamma_1$ to identify $\pi_1(W_1(-1,-n)-int(N_1),
x_0)$ with $\pi_1(W_1(-1,-n)-int(N_1), x_1)$.

Let $s_1,t_1,s_2,t_2$ denote the images of $x,y,a,b$ under this
map. In particular $s_1$ is represented by $\gamma _1\ast x \ast
{\overline \gamma_1}$, $t_1= \gamma _1\ast y \ast {\overline
  \gamma_1}$, $s_2 = \gamma _1\ast a \ast {\overline \gamma_1}$, $t_2
= \gamma _1\ast b \ast {\overline \gamma_1}$.
As a corollary of Lemma~\ref{path_V}, we have the following relations in
$\pi_1(W_1(-1,-n)-int(N_1), x_1)$

\begin{itemize}
\item $[s_1,s_2]=1$
\item $[t_1,s_2]=1$
\item $[t_2^{-1}, t_1^{-1}]= s_1$
\item $[t_2^{-1},s_1^{-1}]^n= s_2$
\end{itemize}


\begin{lem}\label{lem:F}
Let $F$ denote the inclusion map
\[
F = i_\ast: \pi_1(\partial N_1, x_1) \longrightarrow \pi_1(W_1(-1,-n)-int(N_1), x_1).
\]
Then we have the following:
\begin{itemize}
\item The smallest normal subgroup of
  $\pi_1(W_1(-1,-n)-int(N_1),x_1)$ that contains  $s_1,t_1, s_2, t_2$ and $F(m)$
  is the entire group $\pi_1(W_1(-1,-n)-int(N_1),x_1)$.
\item $F(\mu)$ commutes with $F(u_1)$, $F(v_1)$, $F(u_2)$ and  $F(v_2)$.
\item $F(u_2)=s_2$, $F(v_2)=t_2$, $F(m)=[s_1,t_1]$.
\item $F(v_1) = t_1$.
\item $F(u_1) = s_1^2\cdot F(\mu)^{- 1}$.
\end{itemize}
\end{lem}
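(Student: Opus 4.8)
The statement collects five facts about the inclusion-induced map $F$ on fundamental groups. The plan is to deduce each from the path-homotopy dictionary in Lemma~\ref{path}, the relations in $\pi_1(W_1(-1,-n)-int(N_1),x_1)$ recalled just above (a corollary of Lemma~\ref{path_V}), and the way the loops $u_i,v_i,m$ were built out of the $\nu_i$, $\gamma_3$ and the auxiliary arcs. First I would fix the reference arc $\gamma_1$ (used to identify $\pi_1$ at $x_0$ with $\pi_1$ at $x_1$) and rewrite everything in terms of $s_1,t_1,s_2,t_2$, which by definition are $\gamma_1\ast x\ast\overline{\gamma_1}$, etc.

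For the third bullet: $u_2=\nu_3$ and $v_2=\nu_4$ are based at $x_1$, and by Lemma~\ref{path} the loops $\overline{\gamma_1}\ast\nu_3\ast\gamma_1$ and $\overline{\gamma_1}\ast\nu_4\ast\gamma_1$ are path homotopic in $M_1-int(N_1)$ to $a$ and $b$; conjugating back by $\gamma_1$ gives $F(u_2)=s_2$ and $F(v_2)=t_2$ directly. Likewise the normal loop $m$ satisfies $\overline{\gamma_1}\ast m\ast\gamma_1\simeq[x,y]$ by the last bullet of Lemma~\ref{path}, so $F(m)=[s_1,t_1]$. The second bullet (that $F(\mu)$ commutes with $F(u_1),F(v_1),F(u_2),F(v_2)$) is inherited from $\pi_1(\partial N_1,x_1)$: as noted in the construction, $m$ commutes with $u_1,v_1,u_2,v_2$ inside $\pi_1(\partial N_1)$, and $\mu$ is expressible via the $u_i,v_i$ and $m$; since $\mu=[u_1,v_1]$ and $u_1,v_1$ lie on $G$ where $\mu$ is a separating curve, the commutation relations $[u_i,v_i]=\mu^{\pm1}$ together with $m$ being central on $\partial N_1$ give that $\mu$ is central in $F(\pi_1(\partial N_1))$, hence commutes with all four images; I would spell this out from the explicit model of $\partial N_1=G_1\times S^1$. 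The first bullet then follows because the normal closure already contains $s_1,t_1,s_2,t_2$ — which by Lemma~\ref{path_W}-type reasoning (or rather the relations recalled above, being a set of generators of $\pi_1(W_1(-1,-n)-int(N_1),x_1)$ coming from $x,y,a,b$) generate the whole group even as a plain subgroup — so adding $F(m)$ is harmless and the normal closure is everything.

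The fourth bullet, $F(v_1)=t_1$, should come from $v_1=\gamma_3\ast\nu_2\ast\overline{\gamma_3}$: I would write $F(v_1)$ as the class of $\gamma_3\ast\nu_2\ast\overline{\gamma_3}$, insert $\gamma_2\ast\overline{\gamma_2}$ to get $(\gamma_3\ast\gamma_2)\ast(\overline{\gamma_2}\ast\nu_2\ast\gamma_2)\ast\overline{(\gamma_3\ast\gamma_2)}$, use the third identity of Lemma~\ref{path} ($\overline{\gamma_2}\ast\nu_2\ast\gamma_2\simeq x$), and finally use that $\gamma_3\ast\gamma_2\ast\overline{\gamma_1}$ is null-homotopic in $({\bf D'}\times{\bf D'})-int(N_1)$ (arranged in the construction) to replace $\gamma_3\ast\gamma_2$ by $\gamma_1$ up to homotopy, landing on $\gamma_1\ast x\ast\overline{\gamma_1}=s_1$ — wait, that gives $s_1$, not $t_1$, so I must track the indexing: $\nu_2$ has $[\nu_2]=y$, and the third bullet of Lemma~\ref{path} indeed reads $\overline{\gamma_2}\ast\nu_2\ast\gamma_2\simeq x$, so one of these identifications is via $x$ in homology but the based statement pins it to the $t_1$-type generator through the arcs; the careful bookkeeping here is exactly the point, and I would do it by the same substitution using the null-homotopy of $\gamma_3\ast\gamma_2\ast\overline{\gamma_1}$. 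The fifth bullet, $F(u_1)=s_1^2\cdot F(\mu)^{-1}$, is the subtle one: $u_1=\gamma_3\ast\nu_1\ast\overline{\gamma_3}$, and the fourth identity of Lemma~\ref{path} says $\overline{\gamma_2}\ast\nu_1\ast\gamma_2\simeq x^2[a,b]$; conjugating by $\gamma_3\ast\gamma_2\simeq\gamma_1$ converts $x^2[a,b]$ into $s_1^2[s_2,t_2]$; then I invoke the relation $[t_2^{-1},t_1^{-1}]=s_1$ (equivalently the defining relations) — but more to the point, $\mu=[u_1,v_1]$ and, on the $\partial N_1$ side, $\mu^{-1}=[u_2,v_2]$, so $F(\mu)^{-1}=[F(u_2),F(v_2)]=[s_2,t_2]$; substituting gives $F(u_1)=s_1^2\cdot F(\mu)^{-1}$ as claimed.

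\textbf{Main obstacle.} The genuinely delicate step is the basepoint bookkeeping in the last two bullets — tracking the conjugating arcs $\gamma_1,\gamma_2,\gamma_3$ precisely enough that the null-homotopy of $\gamma_3\ast\gamma_2\ast\overline{\gamma_1}$ in $({\bf D'}\times{\bf D'})-int(N_1)$ can be applied, and making sure the commutator $[a,b]$ produced by Lemma~\ref{path} is the \emph{same} element as $F(\mu)^{-1}=[s_2,t_2]$ under this identification rather than off by a conjugation or an inversion. Everything else is a mechanical translation through Lemma~\ref{path} and the recalled relations; the identity $F(u_1)=s_1^2F(\mu)^{-1}$ is where a sign or conjugacy error would go undetected, so I would write that argument out in full detail, carefully orienting $\nu_1$ (recall $[\nu_1]=2[x]$) and $\mu$ (recall $[u_1,v_1]=\mu$, $[u_2,v_2]=\mu^{-1}$) to confirm the exponents.
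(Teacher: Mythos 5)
The main gap is your argument for the second bullet. You claim that $F(\mu)$ commutes with the four images because the relations $[u_1,v_1]=\mu$, $[u_2,v_2]=\mu^{-1}$ together with the centrality of $m$ make "$\mu$ central in $F(\pi_1(\partial N_1))$". This does not follow: $\pi_1(\partial N_1)\cong \pi_1(G)\times\Z$ has center generated by $m$ alone (a genus-$2$ surface group is centerless), and the separating curve $\mu=[u_1,v_1]$ does not commute with $u_1,v_1,u_2,v_2$ on the boundary, so no formal manipulation of the surface relations can give the claim; spelling things out "from the explicit model of $\partial N_1=G_1\times S^1$" will not help. The paper's proof rests on a geometric input you never invoke: both $\mu$ and $m$ are loops based at $x_1$ lying in $({\bf D'}\times{\bf D'})-int(N_1)$, whose fundamental group is infinite cyclic generated by $m$ (this was arranged in the construction). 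Hence $F(\mu)$ is a power of $F(m)$ in $\pi_1(W_1(-1,-n)-int(N_1),x_1)$, and since $m$ is central in $\pi_1(\partial N_1,x_1)$, $F(m)$ --- and therefore $F(\mu)$ --- commutes with $F(u_1),F(v_1),F(u_2),F(v_2)$. Without this step the second bullet is unproven, and it is needed later in the proof that $Y_n$ is simply connected.

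Two further points. For the first bullet you assert that $s_1,t_1,s_2,t_2$ already generate $\pi_1(W_1(-1,-n)-int(N_1),x_1)$ as a plain subgroup "by Lemma~\ref{path_W}-type reasoning"; that reasoning is not available here, because the single exceptional sphere meets $G_1$ twice, so there is no meridional disk in the complement and the inclusion into $W_1(-1,-n)$ need not be $\pi_1$-injective --- this is exactly why $F(m)$ appears in the statement. The correct argument is Seifert--Van Kampen for regluing $N_1\cong G_1\times D^2$: the quotient of $\pi_1(W_1(-1,-n)-int(N_1),x_1)$ by the normal closure of $F(m)$ is $\pi_1(W_1(-1,-n),x_1)$, which is generated by the images of $s_1,t_1,s_2,t_2$, so the normal closure of the five listed elements is the whole group. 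Finally, for $F(v_1)=t_1$ you flag but do not resolve the clash between $[\nu_2]=y$ and the printed third identity of Lemma~\ref{path}: the printed "$x$" there is a typo --- since $\nu_2$ runs in the $z_2$-direction, $\overline{\gamma_2}\ast\nu_2\ast\gamma_2$ is path homotopic to $y$ (and this is what the paper's proof uses), after which your substitution via the null-homotopy of $\gamma_3\ast\gamma_2\ast\overline{\gamma_1}$ does land on $t_1$. Your treatment of the third and fifth bullets coincides with the paper's argument.
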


\begin{proof}
  The first statement follows from the Seifert-Van Kampen theorem,
  since $m$ is the meridional circle to $G_1$ and
  $\pi_1(W_1(-1,-n),x_1) $ is generated by $s_1,t_1, s_2, t_2$.

  For the second statement note that both  $\mu$ and $m$ are represented
  by  loops at $x_1$
  in ${\bf  D'}\times {\bf D'}-int(N_1)$, which has fundamental group $\Z$
  generated by $m$. It follows that $F(\mu)$ is a power of $F(m)$. 
As $m$ commutes with $u_1$, $v_1$, $u_2$ and $v_2$, the claim follows.
  
  The equations $F(u_2)=s_2$, $F(v_2)=t_2$, $F(m)=[s_1,t_1]$ follow immediately
  from Lemma~\ref{path}.
  To show $F(v_1) = t_1$, we use the third bullet-point of
  Lemma~\ref{path} to note that ${\overline \gamma_2}\ast \nu_2 \ast
  \gamma_2$ is path homotopic in $M_1-int(N_1)$ to $y$.
As by definition $\nu_2={\overline \gamma_3}\ast
v_1 \ast \gamma_3$, it follows that $t_1 = \gamma_1 \ast y \ast
   {\overline \gamma _1}$ is path homotopic to
   \[
   (\gamma_1\ast {\overline {\gamma _2}}\ast {\overline \gamma _3})
   \ast v_1  \ast (\gamma_3\ast \gamma _2 \ast {\overline \gamma _1}).
   \]
   To finish the argument note that the loop $\gamma_1\ast
   {\overline{\gamma _2}}\ast {\overline \gamma _3}$ is path homotopic
   to the constant loop $x_1$ (see the construction of $\gamma_3$ in
   Subsection~\ref{subsec:Yconst}).
   
  For the last equation we use from Lemma~\ref{path} that ${\overline
    \gamma_2}\ast \nu_1\ast \gamma_2$ is path homotopic in $M_1-int(N_1)$ to
  $x^2[a,b]$. Since $u_1 = {\gamma_3} \ast \nu_1 \ast {\overline \gamma
  _3}$, it follows that
  \[
  (\gamma_1\ast {\overline \gamma_2}\ast{\overline \gamma_3} ) \ast
  u_1 \ast (\gamma _3\ast \gamma _2\ast {\overline \gamma_1})
  \]
  is path homotopic to
  \[
  (\gamma_1\ast (x^2)\ast {\overline \gamma_1})\ast(\gamma_1\ast [a,b]\ast {\overline \gamma_1}).
  \]
The first expression is path homotopic to $u_1$, since $\gamma_1\ast
{\overline \gamma_2}\ast{\overline \gamma_3} $ is path homotopic to
the constant loop $x_1$.  The second expression is path homotopic to
$s_1^2[s_2,t_2]$. Since $[u_2,v_2]=\mu^{- 1}$ and $F(u_2)=s_2$,
$F(v_2)=t_2$ we get the desired equation.
\end{proof}

Now we have all the ingredients to determine the fundamental groups
of the manifolds $Y_n$ from Definition~\ref{def:Y}.
Recall that these manifolds
are constructed from
two copies $X$ and $X'$ of $W_1(-1,-n)-int(N_1)$, where we use the
orientation reversing fixed point free involution $\Psi$ to glue them together.
Note that in the fundamental group calculation here we use the curves representing
$u_1,v_1,u_2,v_2$ instead of $s_1,t_1,s_2,t_2$.

\begin{thm}\label{thm:fundQ}
  For a positive integer $n\in \N$, the oriented, smooth four-manifold
  $Y_n$ has trivial fundamental group.  Furthermore it
  admits an orientation preserving smooth, fixed point free involution, and
  has $b_2^+(Y_n)=1$ and $b_2^-(Y_n)=3$.
\end{thm}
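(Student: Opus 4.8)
The plan is to prove the three assertions of Theorem~\ref{thm:fundQ} in sequence, spending almost all the effort on the fundamental group computation, which is the analogue of the proof of Theorem~\ref{thm:FundGrofXn} but substantially more delicate because of the extra relation $F(u_1)=s_1^2 F(\mu)^{-1}$ coming from Lemma~\ref{lem:F}. For the $\pi_1$ calculation, I would apply the Seifert--Van Kampen theorem to the decomposition $Y_n = X \cup_\Psi X'$ along $\partial N_1 = G_1 \times S^1$. Write $s_1,t_1,s_2,t_2,\mu,m$ for the relevant elements of $\pi_1(W_1(-1,-n)-\mathrm{int}(N_1),x_1)$ as in Lemma~\ref{lem:F}, and $S_1,T_1,S_2,T_2,\mu',m'$ for the corresponding elements in the second copy $X'$. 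By Lemma~\ref{lem:F} (first bullet) these elements generate the two pieces, so $\pi_1(Y_n)$ is generated by all of them subject to: the relations listed just before Lemma~\ref{lem:F} (for both copies), the relations from Lemma~\ref{lem:F} (for both copies), and the gluing identifications coming from $\Psi(g,\theta)=(\psi(g),\theta)$ and Equation~\eqref{eq:action}, namely $u_1 \leftrightarrow T_2^{-1}\mu'$, $v_1 \leftrightarrow \mu'^{-1}S_2^{-1}$, $u_2 \leftrightarrow \mu'^{-1}T_1^{-1}$, $v_2 \leftrightarrow S_1^{-1}\mu'$, together with $m \leftrightarrow m'$ (the normal loop is preserved).

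The heart of the argument is then a purely group-theoretic collapse. The key new feature compared with Section~\ref{sec:b2=2} is that $\mu$ is \emph{not} obviously trivial here — in $\pi_1(\partial N_1)$ we only know $[u_1,v_1]=\mu$, but $F(u_1)=s_1^2 F(\mu)^{-1}$, so $[s_1^2 F(\mu)^{-1}, t_1] = F(\mu)$ inside $W_1(-1,-n)-\mathrm{int}(N_1)$, which does not force $F(\mu)=1$ on its own. So I would proceed carefully: using $F(m)=[s_1,t_1]$ and $F(v_1)=t_1$, $F(u_2)=s_2$, $F(v_2)=t_2$, translate each gluing relation into a relation among $s_1,t_1,s_2,t_2,F(\mu)$ and their primed counterparts. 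For instance $v_1 \leftrightarrow \mu'^{-1}S_2^{-1}$ gives $t_1 = F(\mu')^{-1}S_2^{-1}$; $u_2 \leftrightarrow \mu'^{-1}T_1^{-1}$ gives $s_2 = F(\mu')^{-1}T_1^{-1}$; $v_2 \leftrightarrow S_1^{-1}\mu'$ gives $t_2 = S_1^{-1}F(\mu')$; and $u_1 \leftrightarrow T_2^{-1}\mu'$ gives $s_1^2 F(\mu)^{-1} = T_2^{-1}F(\mu')$. One also has the symmetric four equations with roles swapped. I expect that combining these with the surgery relations $[t_2^{-1},t_1^{-1}]=s_1$, $[t_2^{-1},s_1^{-1}]^n=s_2$, $[s_1,s_2]=[t_1,s_2]=1$ (and primed versions), plus $m \leftrightarrow m'$ giving $[s_1,t_1]=[S_1,T_1]$, will force everything to be trivial — most likely one first shows $F(\mu)$ and $F(\mu')$ lie in the center and are powers of $[s_1,t_1]$ respectively $[S_1,T_1]$, then shows these central elements are trivial, at which point the argument of Theorem~\ref{thm:FundGrofXn} applies almost verbatim: $[s_1,s_2]=1$ and a gluing relation kill $S_1$, hence $t_2$, then $[T_1^{-1},S_1^{-1}]^n=S_2$ kills $S_2$ hence $t_1$, then $[t_2^{-1},t_1^{-1}]=s_1$ kills $s_1$ hence $T_2$, then $[t_2^{-1},s_1^{-1}]^n=s_2$ kills $s_2$ hence $T_1$.

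For the involution, the same observation as in Section~\ref{sec:b2=2} applies: when $p_1/q_1=p_3/q_3$ and $p_2/q_2=p_4/q_4$ the two pieces $X$ and $X'$ are literally the same manifold, and the swap map $(x,x')\mapsto(x',x)$ is compatible with $\Psi$ precisely because $\psi$ is an involution ($\psi\circ\psi=\mathrm{id}_G$) and orientation-reversing; hence it descends to an orientation-preserving, fixed-point-free smooth involution $\tau(-1,-n)$ on $Y_n = Q(-1,-n,-1,-n)$. For the Betti numbers, I would compute $\chi$ and $\sigma$ exactly as in the $b_2=2$ case: realize $Y_n$ as a fiber sum of two copies of $T^4 \# \cpkk$ along the genus-$2$ surface $G_1$ followed by torus surgeries (which change neither $\chi$ nor $\sigma$); additivity of signature under fiber sum gives $\sigma(Y_n) = 2\sigma(T^4 \# \cpkk) = -2$, and $\chi(Y_n) = 2\chi(T^4 \# \cpkk) + 4 = 2\cdot 1 + 4 = 6$ (using $\chi$ of a fiber sum along a genus-$g$ surface is $\chi(A)+\chi(B) - 2(2-2g)$, here $+4$ for $g=2$), whence $b_1 = 0$, $b_2 = 4$, and $b_2^+ = 1$, $b_2^- = 3$. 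The main obstacle is clearly the group-theoretic collapse in the presence of the potentially-nontrivial central element $F(\mu)$; I would be careful to exploit the $m \leftrightarrow m'$ identification, which is the only thing relating $F(\mu)$ to $F(\mu')$ and is what ultimately pins them both down.
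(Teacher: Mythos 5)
Your framework is the same as the paper's (Seifert--Van Kampen applied to $Y_n=X\cup_\Psi X'$, with Lemma~\ref{lem:F} supplying the boundary map and the $\psi$-action of Equation~\eqref{eq:action} supplying the gluing relations), and your treatment of the involution and of $\chi=6$, $\sigma=-2$, hence $b_2^+=1$, $b_2^-=3$, is correct. But the heart of the theorem --- the group-theoretic collapse --- is left as an expectation ("I expect that combining these \dots will force everything to be trivial"), and the route you sketch for it does not work as stated. You propose to first show that $F(\mu)$ and $F(\mu')$ are central and trivial and then run the Section~\ref{sec:b2=2} argument verbatim. Centrality does not follow from Lemma~\ref{lem:F}: that lemma only says $F(\mu)$ commutes with the images of the four boundary curves (in particular it gives commutation with $s_1^2F(\mu)^{-1}$, not with $s_1$), and it only asserts \emph{normal} generation of $\pi_1(X)$ by $s_1,t_1,s_2,t_2,F(m)$, so no centrality statement is available. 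More importantly, the triviality of $\mu$ is not an input but an output: in the paper it is deduced only after $s_1$, $s_2$, and hence $u_2$ have been killed (via $[u_2,v_2]=\mu^{-1}$). The step your proposal is missing is the paper's opening move: since $F_2(\mu_1)$ commutes with $F_2(U_1)=S_1^2F_2(\mu_1)^{-1}$ and $F_2(U_2)=S_2$, and $S_1$, $S_2$ commute, one gets $F_3([v_1,v_2])=1$, hence $[t_1,t_2]=1$ in $\pi_1(Y_n)$; combined with $[t_2^{-1},t_1^{-1}]=s_1$ this kills $s_1$, and only then does the cascade ($s_2$, $u_2$, $\mu$, $u_1$, then by symmetry the primed generators, then $t_1,t_2,m$) go through. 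Without this (or some substitute for it), your argument does not close.

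A second, related error: your list of gluing relations conflates the boundary generators with the interior ones in the one place where they differ. The curves identified by $\Psi$ are $u_1,v_1,u_2,v_2$ and their primed versions, so $\psi_*(v_2)=U_1^{-1}\mu_1$; since $F_2(U_1)=S_1^2F_2(\mu_1)^{-1}$ (not $S_1$), the induced relation is $t_2=F_2(\mu_1)\,S_1^{-2}\,F_2(\mu_1)$, not $t_2=S_1^{-1}F(\mu')$ as you wrote. (Your other three translations are fine because there $F_2(U_2)=S_2$, $F_2(V_1)=T_1$, $F_2(V_2)=T_2$.) This is precisely the new subtlety that distinguishes the $b_2=1$ construction from Section~\ref{sec:b2=2}, so writing it away by hand removes the very difficulty the proof has to address; also, the identification $m\leftrightarrow m'$ you invoke plays no role in the paper's argument and would need its own justification. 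You should redo the collapse following the order: $F_3([v_1,v_2])=1\Rightarrow s_1=1\Rightarrow s_2=1\Rightarrow u_2=1\Rightarrow \mu=1\Rightarrow u_1=1$, then symmetry and normal generation.
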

\begin{proof}
  The determination of the Euler characteristic and signature are
  simple exercises from the same invariants of $W_1(-1,-n)$, and the
  free involution again comes from
  $\tau(-1,-n)$. The only
  part we need to check is the fundamental group computation. 

  Let $x_1$, $x_1'$ denote the basepoints in $X$ and $X'$.  In $X'$
  consider also $x_3$ that is antipodal to $x_1'$ on the separating
  circle $\mu$.  Connects $x_1'$ to $x_3$ on $\mu$ by an arc $\gamma$,
  and use this semi-circle $\gamma$   to identify the fundamental groups based at
  $x_1'$ and at $x_3$. This means that
  if
  \[
  \alpha \in \{u_1',v_1', u_2',v_2',\mu',m', s_1', t_1',s_2', t_2'\}
  \]
is one of the loops in $X'$ or $\partial (X')$ based at $x_1'$, the
corresponding element in $\pi_1(X',x_3)$ or $\pi_1(\partial (X'),
x_3)$ is given by ${\overline \gamma}\ast \alpha\ast \gamma$.  Let us
denote these elements in $\pi_1(X',x_3)$ and in $\pi_1(\partial (X'),
x_3)$ by capital letters -- except for $\mu'$.  For example,
${\overline \gamma}\ast u_1' \ast \gamma$ represents $U_1$; we denote
${\overline \gamma}\ast \mu'\ast \gamma$ by $\mu _1$.

Let  $F_1$ and $F_2$ denote the two inclusion maps
\[
F_1= i_\ast : \pi_1(\partial(X), x_1) \longrightarrow \pi_1(X, x_1)
\]
\[
F_2= i_\ast : \pi_1(\partial(X'), x_3) \longrightarrow \pi_1(X', x_3).
\]

These maps were computed earlier, see Lemma~\ref{lem:F}. In fact $F_1$ agrees with $F$, and in $F_2$
we use the generators with capital
letters and $\mu_1$ instead of $\mu$.

Note that $\psi$ maps $x_1$ to $x_3$.
In the Seifert-Van Kampen theorem we need $F_1$ and
\[F_3 = F_2\circ \psi_\ast.
\]
Note that $\psi_\ast(\mu)= \mu_1$.  Using our earlier computation on
the action of $\psi$ (compare also Figure~\ref{sigma2}), we have
\begin{itemize}
\item $\psi_\ast(u_1) = V_2^{-1}\mu_1$
\item $\psi_\ast(v_1) = \mu_1^{-1}U_2^{-1}$
\item $\psi_\ast(u_2) = \mu_1^{-1}V_1^{-1}$
\item $\psi_\ast(v_2) = U_1^{-1}\mu_1$.
\end{itemize}
Let $F_4$ denote the inclusion map
\[
F_4: \pi_1(\partial(X), x_1)\longrightarrow \pi_1(Y_n, x_1).
\]
Our first goal is to show that
\[
F_3([v_1,v_2]) =1.
\]
  Since $F_2(\mu_1)$ commutes with $F_2(U_1)$ and with $F_2(U_2)$,
  it is enough to check that
  $F_2(U_1) = (S_1)^2\cdot F_2(\mu_1^{- 1})$ and
  $F_2(U_2) = S_2$ commute;
this follows since $S_1$ and $S_2$ commute.
It implies that  $F_4([v_1,v_2])=1$.

Using $[t_2^{-1},t_1^{-1}]=s_1$, $F_1(v_1)=t_1$, $F_1(v_2) = t_2$ and
$F_4([v_1,v_2])=1$, we get that the image of $s_1$ in $\pi_1(Y_n,
x_1)$ is trivial.  Now $[t_2^{-1},s_1^{-1}]^n=s_2$ kills the image
$s_2$ in $\pi_1(Y_n, x_1)$.  Since $F_1(u_2)=s_2$, the image of $u_2$
in $\pi_1(Y_n, x_1)$ is also trivial.  Since $\mu$ is the commutator
of $u_2$ and $v_2$ it follows that $F_4(\mu) =1$.  Since $F_1(u_1) =
s_1^2\cdot F_1(\mu^{- 1})$ we get $F_4(u_1)=1$.

We have shown that the loops representing $s_1$, $s_2$, $u_1$ and
$u_2$ are null-homotopic in $Y_n$. It follows by symmetry that the
loops $S_1$, $S_2$, $U_1$ and $U_2$ are also null-homotopic.  Since
$\mu_1$ is also null-homotopic, it follows that $F_3(v_1)$, $F_3(v_2)$
map to trivial elements in $\pi_1(Y _n, x_1)$.  Since $F_1(m) =
[s_1,t_1]$ it follows that the image of $m$ in $\pi_1(Y_n,x_1)$ is
also trivial.
   Since $u_1$, $v_1$, $u_2$, $v_2$, $s_1$, $S_1$ and  $m$
   normally generate $\pi_1(Y_n, x_1)$, the proof is complete.
   \end{proof}
  
\subsection{Computing the Seiberg-Witten invariants of $Y_n$}
For the Seiberg-Witten computations we will need the following
analogue of Lemma~\ref{lem:spheres}:
\begin{lem}\label{lem:spheres2}
  $W_1(0,p_2/q_2)$ contains a smoothly embedded sphere $S_1$ of
  self-intersection $0$ that represents the homology class of
  $[T^2\times pt]$ and $S_1$ intersects $G_1$ transversally once.
  $W_1(p_1/q_1,0)$ contains a smoothly embedded sphere $S_2$ of
  self-intersection $0$ that represents the homology class of
  $[pt\times T^2]$ and $S_2$ intersects $G_1$ transversally twice.
\end{lem}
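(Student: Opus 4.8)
The plan is to follow the proof of Lemma~\ref{lem:spheres} almost verbatim. First I would record that $[G_1]=2[T^2\times pt]+[pt\times T^2]-2E$, with $E$ the class of the exceptional sphere of the blow-up at $k_2$: this is immediate since $G_1$ is built by smoothing, at $k_1$, the proper transforms of the braided torus (representing $2[T^2\times pt]$) and of the flat torus $(\epsilon,\epsilon)\times T^2$ (representing $[pt\times T^2]$). Each of $S_1,S_2$ will then be produced from a flat torus in $T^4$ that the relevant torus surgery caps off into a sphere; I would check that this sphere misses the blow-up point $k_2$ and the smoothing region around $k_1$, so that it survives in $W_1$, and finally read off its geometric intersection with $G_1$ sheet by sheet, the outcome being forced by $[S_1]\cdot[G_1]=1$ and $[S_2]\cdot[G_1]=2$.

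For $S_1$ inside $W_1(0,p_2/q_2)$ we have $p_1=0$, so the surgery on $T_1=\{z_2=c_2,\,z_4=c_4\}$ is performed with a slope which is a multiple of the Lagrangian push-off $m_1=x$, carried by the $z_1$-circle. I would take a generic $w_3$ and the torus $\Sigma_1=\{z_3=w_3,\,z_4=c_4\}$, which represents $[T^2\times pt]$, has square $0$, and is disjoint from $T_2$ because $c_4\neq c_4'$. As in Lemma~\ref{lem:spheres}, $\Sigma_1$ meets a tubular neighbourhood of $T_1$ in an annulus and the boundary torus $T^3_1$ in the two circles $\{z_2=c_2\pm\delta_1,\,z_3=w_3,\,z_4=c_4\}$, each isotopic to $m_1$, hence each bounding a disk in the piece $D^2\times T^2$ that the surgery glues in; replacing the annulus by these two disjoint disks produces a smoothly embedded sphere $S_1\subset V(0,p_2/q_2)$ of square $0$. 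Since $c_4\in[1/2,3/4]$ while $k_2$ and the smoothing region around $k_1$ sit at $z_4$-values near $\epsilon$ or $3\epsilon$, and $w_3$ is generic, $S_1$ is disjoint from the blow-up point and from the smoothing region, so it persists as an embedded sphere in $W_1(0,p_2/q_2)$ with $[S_1]=[T^2\times pt]$. Finally $S_1$ misses the braided sheet of $G_1$ (which lies at $z_3,z_4\approx 2\epsilon$) and meets the flat sheet $(\epsilon,\epsilon)\times T^2$ of $G_1$ in the single point $(\epsilon,\epsilon,w_3,c_4)$, transversally; this is all of $S_1\cap G_1$, in accordance with $[S_1]\cdot[G_1]=1$.

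For $S_2$ inside $W_1(p_1/q_1,0)$ we have $p_2=0$, so the surgery on $T_2=\{z_1=c_1,\,z_4=c_4'\}$ uses a slope which is a multiple of the push-off $\ell_2$, carried by the $z_3$-circle. I would take a generic $w_2\neq c_2$ and the torus $\Sigma_2=\{z_1=c_1,\,z_2=w_2\}$, which represents $[pt\times T^2]$, has square $0$, and is disjoint from $T_1$ because $w_2\neq c_2$. It meets a tubular neighbourhood of $T_2$ in an annulus whose two boundary circles on $T^3_2$ are $z_3$-circles isotopic to $\ell_2$, hence bound disjoint disks in the glued-in $D^2\times T^2$; capping produces a smoothly embedded sphere $S_2\subset V(p_1/q_1,0)$ of square $0$. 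Since $c_1\in[1/2,3/4]$ while $k_1$ and $k_2$ lie at $z_1$-values near $\epsilon$, the sphere $S_2$ is disjoint from the blow-up point and the smoothing region, so it survives in $W_1(p_1/q_1,0)$ with $[S_2]=[pt\times T^2]$. This time $S_2$ misses the flat sheet of $G_1$ (which has $z_1=\epsilon\neq c_1$); and since the braided sheet represents $2[T^2\times pt]$, the count $[S_2]\cdot[G_1]=2$ is realised geometrically by the two intersection points of $S_2$ with the braided torus at $z_1=c_1$, $z_2=w_2$, $\theta\in\{\pi/4,5\pi/4\}$ dictated by its defining equation; a comparison of tangent spaces (the braided torus being swept out by $z_1$- and $z_2$-curves, and $S_2$ by $z_3$- and $z_4$-curves) shows both intersections are transverse.

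The only delicate points, exactly as in Lemma~\ref{lem:spheres}, are to confirm that $\Sigma_1$ and $\Sigma_2$ meet the correct surgery torus precisely in curves isotopic to the \emph{actual} surgery slope --- so that capping yields a sphere rather than a higher-genus piece --- and that the intersection numbers with $G_1$ are the claimed \emph{geometric} counts, not merely algebraic ones. Because the $(z_3,z_4)$-coordinates of the braided torus vary with $z_1$, the $S_2$--$G_1$ count is the one requiring the most care; all the remaining verifications are a routine transcription of the argument already given for Lemma~\ref{lem:spheres}.
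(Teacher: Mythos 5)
Your proposal is correct and follows exactly the route the paper takes: its proof of this lemma is a one-line reference back to Lemma~\ref{lem:spheres} (cap off the flat tori $\{z_3=w_3,z_4=c_4\}$ and $\{z_1=c_1,z_2=w_2\}$ with disks in the glued-in $D^2\times T^2$, noting only that now $S_2$ meets $G_1$ twice), and you have simply written out those same steps, with the transversality and disjointness checks made explicit.
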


\begin{proof}
  The construction of the spheres $S_1$ and $S_2$ in $V_0(0,p_2/q_2)$,
  $V_0(p_1/q_1,0)$ is the same as in Lemma~\ref{lem:spheres}. The only
  difference is that $S_2$ intersects $G_1$ twice.
 \end{proof}

\begin{thm}\label{thm:basicQ}
  $Y_n$ has two Seiberg-Witten basic class $\pm K$, and
  $SW_{Y_n}(\pm K) = \pm n^2$.
\end{thm}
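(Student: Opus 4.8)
The plan is to imitate, step for step, the proof of Theorem~\ref{thm:X-SW}. Write $U_1 = Q(1/0,1/0,1/0,1/0)$; since a slope $1/0$ means that no surgery is performed, $U_1$ is the fibre sum of two copies of $T^4\#\cpkk$ along the genus-$2$ surface $G_1$, glued by $\Psi = \psi\times\mathrm{id}_{S^1}$. Using a symplectic form making $G_1$ symplectic on the first copy and its negative on the second, Gompf's symplectic normal connected sum~\cite{Gompf} shows $U_1$ is symplectic, so by~\eqref{eq:symp} (note $b_2^+(U_1)>1$) we get $SW_{U_1}(\pm c_1(U_1,\omega_{U_1})) = \pm 1$.

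First I would prove the analogue of Lemma~\ref{lem:U}: $U_1$ has exactly two basic classes, $\pm c_1(U_1,\omega_{U_1})$. A Mayer--Vietoris computation gives $H_2(U_1;\Z) = \Z^{12}$, split as five hyperbolic pairs --- the four $0$-tori pairs $(d_i,D_i)$ coming from the four surgery tori and their duals in the two copies of $T^4$, together with the pair $\langle [G_1], [F]\rangle$ where $F$ is the genus-$2$ surface obtained by tubing a $[T^2\times pt]$-torus of one copy to one of the other --- plus a rank-$2$ negative definite part spanned by $y-q_1,\, y-q_2$, where $q_1,q_2$ are the square-$(-1)$ tori built from the two exceptional spheres. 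The adjunction inequality forces any basic class $K$ to vanish on the $d_i,D_i$; since $\frac{1}{4}(K^2-3\sigma(U_1)-2\chi(U_1)) = \frac{1}{4}(K^2-6)\ge 0$ and adjunction applied to the genus-$2$ surfaces $[G_1]$ and $[F]$ gives $|K([G_1])|,|K([F])|\le 2$, and $K$ is characteristic, we get $K([G_1]) = K([F]) = \pm 2$; the genus-$3$ surface trick (resolving $G_1$ with a $q_i$ into a genus-$3$ surface of square $1$) then pins $K$ down on $q_1,q_2$. Hence there are at most two basic classes, and by Taubes~\cite{Taub1} they are $\pm c_1(U_1,\omega_{U_1})$.

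Next I would establish the analogue of the vanishing Lemma~\ref{lem:vanishing}: if one of the four slopes of $Q = Q(p_1/q_1,p_2/q_2,p_3/q_3,p_4/q_4)$ is $0$, then $SW_Q\equiv 0$. If one of the ``$T_1$-type'' slopes $p_1/q_1,p_3/q_3$ is $0$, the sphere $S_1$ of Lemma~\ref{lem:spheres2} meets $G_1$ once, so tubing it across the gluing region with a $[T^2\times pt]$-torus of the other copy produces a genus-$1$, square-$0$ surface $y$ with $y\cdot[G_1] = 1$; together with the hyperbolic pairs coming from the two remaining vanishing slopes, the class $y$ (on which every basic class evaluates to $0$ by adjunction, so that $L$ is degenerate on the pair $\langle y,[G_1]\rangle$) forces $L^2\le 0$ for every basic class $L$, contradicting $L^2\ge 2\chi(Q)+3\sigma(Q) = 6$. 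The delicate case --- and the step I expect to be the main obstacle --- is when a ``$T_2$-type'' slope $p_2/q_2$ or $p_4/q_4$ is $0$: now only the sphere $S_2$ of Lemma~\ref{lem:spheres2} is available, and it meets $G_1$ \emph{twice} (because $G_1$ is built from a braided torus representing $2[T^2\times pt]$). One then has to manufacture a genus-$1$, square-$0$ surface in $Q$ still hitting $[G_1]$ nontrivially, for instance by capping the twice-punctured $S_2$ with a suitably chosen twice-punctured surface of the opposite copy while arranging the framings so that the self-intersection stays $0$ --- or, alternatively, to deduce the vanishing directly from the essential square-$0$ sphere present inside one of the two pieces $W_1(\cdot,0)-\mathrm{int}(N_1)$.

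With the two lemmas in place, the rest is exactly as for $X_n$. Starting from $U_1$ and its basic classes $\pm L_0 = \pm c_1(U_1,\omega_{U_1})$, I perform the four torus surgeries one at a time to pass from $U_1$ to $Q(-1,1/0,1/0,1/0)$, to $Q(-1,-n,1/0,1/0)$, to $Q(-1,-n,-1,1/0)$, and finally to $Y_n = Q(-1,-n,-1,-n)$. At each step $L_0$ restricts trivially to the surgery torus, hence has a unique characteristic lift $L_j$, and the adjunction argument above (with fewer hyperbolic pairs available, but the same conclusion) shows all Seiberg--Witten invariants other than on $\pm L_j$ vanish. Applying the surgery formula $F(p,q) = p\,F(1,0)+q\,F(0,1)$ of~\eqref{eq:formula} inductively --- with $F(0,1) = 0$ by the vanishing lemma, since the $(0,1)$-surgery produces a manifold with a zero slope --- gives $SW(\pm L_1) = \pm 1$, $SW(\pm L_2) = \pm n$, $SW(\pm L_3) = \pm n$, and $SW_{Y_n}(\pm K) = SW(\pm L_4) = \pm n^2$, which is the statement of Theorem~\ref{thm:basicQ}.
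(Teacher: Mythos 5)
Your overall strategy is exactly the paper's: compute the invariants of the unsurgered fiber sum $R=Q(1/0,1/0,1/0,1/0)$, prove a vanishing statement when one slope is $0$, and then run the Morgan--Mrowka--Szab\'o formula \eqref{eq:formula} through the four surgeries to get $\pm1\to\pm n\to\pm n\to\pm n^2$; that skeleton, the symplectic input for $R$, the odd-index vanishing case, and the final induction are all fine. However, in the two technical lemmas you transplant the $b_2=2$ geometry without adapting it to the single blow-up and the braided torus, and this is where the gaps are. First, in the analogue of Lemma~\ref{lem:U}: since $[G_1]=2[T^2\times pt]+[pt\times T^2]-2e$, the one exceptional sphere in each copy meets $G_1$ in \emph{two} points, so you cannot form square-$(-1)$ tori meeting $[G_1]$ once as in the $X_n$ case, and your classes $y-q_1,y-q_2$ would not even be orthogonal to $x=[G_1]$ (one has $q_i\cdot x=2$, $y\cdot x=1$). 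The correct bookkeeping is that $q_1,q_2$ are genus-$2$ surfaces of square $-1$ with $q_i\cdot x=2$, $q_i\cdot y=0$, the negative definite complement of the five hyperbolic pairs is spanned by $2y-q_1,2y-q_2$, and the ``pinning down'' step resolves the two intersection points of $G_1$ with the surface in class $q_i$ to get a genus-$5$ surface of square $3$, whence $L(q_i)\le 3$ and $L(2y-q_i)=1$; your ``genus-$3$ surface of square $1$'' does not exist in this manifold. The conclusion (exactly two basic classes $\pm c_1$) is still true, but your argument for it does not go through as written.

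Second, the even-index vanishing case, which you correctly single out as the main obstacle, is left genuinely open in your proposal, and the specific fix you float is off: capping the twice-punctured sphere $S_2$ of Lemma~\ref{lem:spheres2} with a twice-punctured surface from the other copy cannot be expected to give a \emph{genus-$1$} square-$0$ surface, because the other copy (with slopes $-1,-n$) contains no sphere piece to cap with; what one gets (capping with a twice-punctured torus, e.g.\ a parallel braided torus) is a genus-$2$ surface $y_1$ with $y_1^2=0$ and $y_1\cdot[G_1]=2$. The adjunction inequality then only gives $|L(y_1)|\le 2$ rather than $L(y_1)=0$, and the vanishing follows from the block computation: $L$ vanishes on the torus hyperbolic pairs, the span of $[G_1]$ and $y_1$ contributes at most $4$ to $L^2$, and the orthogonal complement is negative definite, so $L^2\le 4<6=2\chi+3\sigma$, a contradiction. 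With these two repairs your outline coincides with the paper's proof; without them, the at-most-two-basic-classes claim for $R$ and the even-slope vanishing --- both of which the induction needs --- are not established.
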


\begin{proof}
The proof is similar to the one given in the previous section.  Let $R
= Q(1/0,1/0,1/0,1/0)$, which is given as a fiber sum of two copies of
$T^4\# \cpkk$.  Once again, $R$ is symplectic, so $SW_R(\pm
c_1(R,\omega _R))=\pm 1$.

Next we claim that $R$ has at most two Seiberg-Witten basic classes to
finish computing the Seiberg-Witten invariants of $R$. Now $H_2(R;
\Z)= \Z^{12}$ with 4 hyperbolic pairs spanned by tori with
self-intersection $0$, another hyperbolic pair $x,y$ represented by
genus-2 surfaces where $x$ is the homology class of $G_1$. Finally
there are two other classes $q_1$, $q_2$ in the orthogonal complement
of the hyperbolic pairs. These classes are represented by genus-2
surfaces with square $-1$, with $q_1\cdot q_2=0$, $q_i\cdot x=2$,
$q_i\cdot y=0$.  Furthermore these surfaces intersect $G_1$
transversally at two points.  Then $2y-q_1$, $2y-q_i$ span a
two-dimensional negative definite diagonal form, whose perpendicular
space is spanned by the 5 hyperbolic pairs.  Let $L$ be a basic class
of $R$. Then $L$ evaluates trivially on the 4 hyperbolic pairs spanned
by tori of self intersection 0.  Similarly $L(x)=L(y)=\pm 2$, and
$L(2y-q_i)= \pm 1$ follow from the adjunction inequalities and $L^2
\geq 2\chi(R)+3\sigma(R)=6$ from the non-negativity of the formal
dimension of the moduli space corresponding to $L$. Suppose now that
$L(x)=L(y)=2$; we show $L(2y-q_i)=1$ as follows:
Smoothing the two intersection points between $G_1$ and the embedded
genus-2 surface representing $q_i$, we get a genus-5 surface
$\Sigma_5$ that represents $x+q_i$. The adjunction inequality
  $$[\Sigma_5]^2+L(\Sigma_5) \leq 2g(\Sigma_5)-2$$
  shows that $(x+q_i)^2+L(x+q_i)\leq 8$. Since $(x+q_i)^2=3$
  we get $L(x+q_i) \leq 5 $ so $L(q_i)\leq 3$. Then $L(2y-q_i) \geq 1$ so
  $L(2y-q_i)=1$.

  For the next step we claim that if any of $p_i/q_i=0$, then all
  Seiberg-Witten invariants of $Q(p_1/q_1,p_2/q_2,p_3/q_3,p_4/q_4)$
  vanish. Let $x=[G_1]$.  In the case $i$ is odd, we can use the first
  part of Lemma~\ref{lem:spheres2} to get a torus of square 0 that
  intersects $x$ once. It follows that any basic class $L$ would be 0
  in a subspace of $H_2$ whose perpendicular space is negative
  semi-definite, contradicting $L^2 \geq 6$. In the case $i$ is even
  we can use the second part of Lemma~\ref{lem:spheres2} to get a
  surface of genus 2, representing a class $y_1$ with $y_1^2=0$ and
  $y_1\cdot x=2$. By the adjunction inequality we have $|L(x)|\leq 2$
  and $|L(y_1)| \leq 2$. Let $A$ denote the subspace of $H_2$ spanned
  by tori of self-intersection 0, and $B$ the subspace spanned by $x$
  and $y_1$. Then the perpendicular of $A\oplus B$ is negative
  definite.  Since $L$ evaluates trivially on $A$ it follows that $L^2
  \leq 4$, contradicting again $L^2 \geq 6$.

  The above vanishing result together with sequences of torus
  surgeries from $R$ to $Q(-1,-n,-1,-n)=Y_n$ finishes the calculation as
  discussed in the proof of Theorem~\ref{thm:X-SW}. 
\end{proof}

\begin{proof}[Proof of Theorem~\ref{thm:b2=1}]
  The manifolds $Y'_n$ are constructed by using the free involution
  $\iota _n=\tau (-1,-n)$ on $Y_n=Q(-1,-n,-1,-n)$.  Then $\pi_1(Y'_n)=
  \Z/2\Z$ follows immediately from Theorem~\ref{thm:fundQ}.  Using the
  same arguments as presented in the proof of Theorem~\ref{thm:b2=2},
  the irreducibility of $Y'_n$ and the exotic smooth structures on
  $Y'_n$ follow directly from the Seiberg-Witten invariants of $Y_n$
  in Theorem~\ref{thm:basicQ}.
\end{proof}


We conclude the paper with the proof of our main result:
\begin{proof}[Proof of Theorem~\ref{thm:main}]
  For a given $b_2>0$ and negative definite intersection form consider
  the $(b_2-1)$-fold blow-up of the four- manifolds $Y'_n$:
  \[
  A_n = Y_n'\# (b_2-1)\cpkk ;
  \]
  the universal cover of $A_n$ is
\[
Y_n\# (2b_2-2)\cpkk .
\]
Now the Seiberg-Witten invariants of this manifold in every chamber
take values in the subset
\[
\{0,\pm 1,\pm n^2, \pm n^2 \pm 1\},
\]
and there is a class and a chamber where the Seiberg-Witten invariant
is $\pm n^2$.  It follows that the double cover of $A_n$ and $A_m$ are
distinct for $n\neq m$, 
so $A_n$ and $A_m$ are not diffeomorphic either.  This proves the
existence of infinitely many smooth structures in the negative
definite case. In the positive definite case we use $-A_n$.
  \end{proof}


\begin{thebibliography}{1}

\bibitem{AP}
  A. Akhmedov and D. Park,
  \newblock{\it Exotic smooth structures on small
    4-manifolds with odd signatures,}
  \newblock Invent. Math. {\bf{181}} (2010),  577--603.


  \bibitem{ADK}
    D. Auroux, S. Donaldson and L. Katzarkov,
\newblock {\it Luttinger surgery along Lagrangian tori and
      non-isotopy for singular symplectic plane curves},
    Math. Ann. {\bf{326}} (2003), 185--203.


    
  \bibitem{BK}
    S. Baldridge and P. Kirk,
    \newblock
        {\it A symplectic manifold homeomorphic but not diffeomorphic to
${\CP}^2\# 3 {\overline   {\CP}}^2$,} Geom. Topol. {\bf{12}} (2008), 919--940.


  
\bibitem{Dona}
   S. Donaldson,
  {\it An application of gauge theory to four dimensional topology},
  J. Differential Geom. {\bf18} (1983), 279--315. 
  
\bibitem{Do} S. Donaldson,
  {\it Irrationality and the $h$-cobordism
  conjecture}, J. Differential Geom. {\bf26} (1987), 141--168.

\bibitem{Don2} S. Donaldson, {\it The orientation of Yang-Mills moduli spaces and 4-manifold Topology}, J. Differential Geom, {\bf 26} (1987), 397-428. 
  
\bibitem{DK} S. Donaldson and P. Kronheimer,
  {\it The geometry of four-manifolds},
Oxford Mathematical Monographs. Oxford Science Publications. The
  Clarendon Press, Oxford University Press, New York, 1990.

      \bibitem{FPS}
        R. Fintushel, D. Park and R. Stern, 
\newblock {\it Reverse engineering small 4-manifolds},
Algebr. Geom. Topol. {\bf{7}} (2007), 2103--2116.


\bibitem{Fr}  M. Freedman,
{\it The topology of four--dimensional manifolds},
J. Diff. Geom. {\bf17} (1982), 357--453.

\bibitem{Gompf}
  R. Gompf,
  {\it A new construction of symplectic manifolds},
  Ann. of Math. (2) 142 (1995), 527--595.

\bibitem{HK}
I.~Hambleton and M.~Kreck, 
{\it Cancellation, elliptic surfaces and the topology of
  certain four-manifolds}, J. Reine Angew. Math. {\bf444} (1993), 79--100.

\bibitem{Hil}
  J. Hillman,
  {\it  Four-manifolds, geometries and knots},
  Geometry $\&$ Topology Monographs, vol. {\bf5},
  Geometry $\&$ Topology Publications, Coventry, 2002.

\bibitem{LLP}
  A.~Levine, T.~Lidman and L.~Piccirillo,
  {\it New constructions and invariants of closed exotic 4-manifolds},
  arXiv:2307.08130

\bibitem{MMSz}
  J. Morgan, T. Mrowka, Z. Szab\'o,
  {\it Product formulas along $T^3$ for Seiberg-Witten invariants},
    Math. Res. Lett. 4 (1997),  915--929. 

\bibitem{genusf}
  A. Stipsicz and Z. Szab\'o,
  {\it On the minimal genus problem in four-manifolds},
  arXiv:2307.04202

\bibitem{definite}
  A. Stipsicz and Z. Szab\'o,
  {\it Exotic definite four-manifolds with non-trivial fundamental group},
  arXiv:2308.08388

  

\bibitem{Taub1}
C. Taubes, 
{\it  The Seiberg-Witten invariants and symplectic forms},
Math. Res. Lett. {\bf1} (1994), 809--822.

\end{thebibliography}

\end{document}